\DeclareMathAlphabet{\mathscr}{OMS}{cmsy}{m}{n} 
\setlist{nolistsep} 
\title{Formality and strongly unique enhancements}
\let\runtitle\@title
\DeclarePairedDelimiter\abs{\lvert}{\rvert}
\DeclarePairedDelimiter\norm{\lVert}{\rVert}
\let\oldabs\abs
\def\abs{\@ifstar{\oldabs}{\oldabs*}}
\let\oldnorm\norm
\def\norm{\@ifstar{\oldnorm}{\oldnorm*}}
\newtheoremstyle{plainmod}
{}
{}
{\it}
{}
{}
{}
{ }
{\textbf{\thmnumber{#2}.\thmname{\hspace{1ex}#1}}\thmnote{\textbf{ -- #3}}\textbf{.}}
\newtheoremstyle{definitionmod}
{}
{}
{}
{}
{}
{}
{ }
{\textbf{\thmnumber{#2}.\thmname{\hspace{1ex}#1}}\thmnote{\textbf{ -- #3}}\textbf{.}}
\newtheoremstyle{remarkmod}
{}
{}
{}
{}
{}
{}
{ }
{\textsc{\thmnumber{#2}.\thmname{\hspace{1ex}#1}}\thmnote{\textsc{ -- #3}}\textit{.}}
\numberwithin{equation}{section}
\theoremstyle{plainmod} 
\newtheorem{thm}[equation]{Theorem}
\newtheorem{cor}[equation]{Corollary}
\newtheorem{lem}[equation]{Lemma}
\newtheorem{prop}[equation]{Proposition}
\newtheorem{fact}[equation]{Fact}
\theoremstyle{definitionmod} 
\newtheorem{defn}[equation]{Definition}
\newtheorem{figr}[equation]{Figure}
\newtheorem{es}[equation]{Example}
\newtheorem{rem}[equation]{Remark}
\newtheorem{nota}[equation]{Notation}
\newtheorem{conv}[equation]{Convention}
\newtheorem{defnp}[equation]{Definition/Proposition}
\let\expandafter\oldproof\csname\string\proof\endcsname
\let\oldendproof\endproof
\renewenvironment{proof}[1][\proofname]{%
  \oldproof[\scshape #1.]%
}{\oldendproof}
\newcommand{\coker}{\operatorname{coker}}
\newcommand{\acb}{\operatorname{Ac}^{\operatorname{b}}}
\newcommand{\derdg}{\operatorname{D}}
\newcommand{\der}{\mathscr{D}}
\newcommand{\derb}{\der^{\operatorname{b}}}
\newcommand{\kcom}{\mathscr{K}}
\newcommand{\kcomb}{\kcom^{\operatorname{b}}}
\newcommand{\dc}[1]{\derdg(#1)^{\operatorname{c}}}
\newcommand{\tr}{\operatorname{tr}}
\newcommand{\perf}{\operatorname{Perf}}
\newcommand{\id}{\operatorname{id}}
\newcommand{\essim}{\operatorname{EssIm}}
\newcommand{\dg}{\operatorname{DG}}
\newcommand{\ext}{\operatorname{Ext}}
\newcommand{\incl}{\operatorname{incl}}
\renewcommand{\hom}{\operatorname{Hom}}
\newcommand{\cone}{\operatorname{Cone}}
\newcommand{\ring}{\mathbbm{k}}
\newcommand{\pretr}{\operatorname{pretr}}
\newcommand{\fmod}{\operatorname{-mod}}
\newcommand{\Mod}{\operatorname{Mod}}
\newcommand{\dgmod}[1]{\operatorname{DGMod}(#1)}
\newcommand{\field}{\mathbb{K}}
\newcommand{\dgfun}{\mathcal{H}om}
\newcommand{\conedg}{\cone_{\dg}}
\newcommand{\Ind}{\operatorname{Ind}}
\newcommand{\semifree}{\operatorname{SF}}
\newcommand{\comdg}[1]{\mathsf{C}_{\dg}(#1)}
\newcommand{\comdgb}[1]{\mathsf{C}_{\dg}^{\operatorname{b}}(#1)}
\begin{document}
\titlepage
\begin{center}
{\LARGE \textbf{Formality and strongly unique enhancements}}
\par\vspace{1ex}
\textit{Antonio Lorenzin}
\par \vspace{1ex}
{\footnotesize Universit\"{a}t Innsbruck, Institut f\"{u}r Mathematik, Technikerstraße 25, 6020 Innsbruck, Austria
\par\vspace{-0.5ex}
E-mail address: \texttt{antonio.lorenzin@uibk.ac.at}}
\end{center}
\tableofcontents
\par\vspace{5ex}
\noindent {\large \textbf{Abstract}}\hspace{2ex} Inspired by the intrinsic formality of graded algebras, we prove a necessary and sufficient condition for strongly uniqueness of DG-enhancements. 
This approach offers a generalization to linearity over any commutative ring. In particular, we obtain several new examples of triangulated categories with a strongly unique DG-enhancement. Moreover, we also show that the bounded derived category of an exact category has a unique enhancement.
\par\vspace{2ex}

\noindent {\large \textbf{Keywords}}\hspace{2ex} triangulated categories; algebraic triangulated categories; strongly unique enhancements; DG-categories; standard categories
\par\vspace{2ex}

\noindent {\large \textbf{Acknowledgements}}\hspace{2ex} I am very thankful to my PhD supervisor, Alberto Canonaco, for his suggestions and corrections. I also wish to acknowledge the help of Francesco Genovese and Alice Rizzardo, whose suggestions clarified some arguments presented here. I would further like to thank Amnon Neeman for his insight into the proof of Proposition \ref{prop:realdg} and Corollary \ref{cor:dbunique}.
I am also grateful to Gustavo Jasso and Noah Olander for our brief discussions on their research.
Finally, I want to thank the referees, whose comments improved the exposition of the paper.

\section*{Introduction}\addcontentsline{toc}{section}{Introduction}
\markboth{Introduction}{Introduction}
It is well-known that cones of triangulated categories are not functorial. In order to fix the issues coming from such ill behaviour, the theory has developed the idea of enhancing triangulated categories to higher categories.

A triangulated category having a DG-enhancement, i.e. obtained by a DG-category with suspensions and {cones} (see Definition~\ref{def:suspension} and Definition~\ref{def:conedg}), is called \emph{algebraic}. As one may hope, many important examples of triangulated categories are algebraic: for instance, all derived categories and homotopy categories of complexes satisfy the definition.\footnote{As it will be made explicit in Convention \ref{conv:field}, all categories are assumed to be small in an appropriate Grothendieck universe.}

However, being algebraic does not prevent a triangulated category from having a bizarre behaviour. Indeed, a priori its structure may come from different DG-categories, which are not quasi-equivalent. This gives rise to the notion of having a unique DG-enhancement, satisfied by triangulated categories with one and only one DG-shape. Of course, not all algebraic triangulated categories have a unique enhancement. An example is given by $\mathbb{K}\fmod$, the category of finite dimensional vector spaces over the field $\mathbb{K}$. This category becomes triangulated with shift the identity and distinguished triangles generated by short exact sequences. In \cite{schlichting}, Schlichting proved that $\mathbb{K}\fmod$ does not have a unique enhancement when $\mathbb{K}=\mathbb{F}_p$ (with $p$ prime), giving two explicit examples that are not quasi-equivalent (for the transposition of the result in the DG-world, one may refer to \cite[Corollary 3.10]{canonacostellari}). It is important to notice that one of the DG-enhancements is not $\mathbb{K}$-linear. 

The motivating question that led to the birth of this article is the following: does $\mathbb{K}\fmod$ have a unique $\mathbb{K}$-linear enhancement? The answer is yes (Example \ref{es:Kmod}). In order to prove such result, we started by noticing that any DG-enhancement of $\mathbb{K}\fmod$ is associated to the graded algebra $\mathbb{K}[t,t^{-1}]$, where $t$ has degree 1. After some reading, we found the notion of intrinsic formality of a graded algebra (Definition \ref{def:intr}), which immediately implies uniqueness of the DG-enhancement for the associated triangulated category, as proved in Proposition \ref{prop:unique}. The conclusion that $\mathbb{K}[t,t^{-1}]$ is intrinsically formal follows from \cite[Corollary 4.8]{saito23}.

After this discovery, we wanted to understand how intrinsic formality relates to stricter requirements of the uniqueness of enhancements, namely strong uniqueness. This property tells us that the triangulated autoequivalences of the triangulated category come from the DG-world (cf. Proposition \ref{prop:strongenh}). Very few examples of triangulated categories with a strong unique enhancement are known, the most important one being the bounded derived categories of projective varieties, investigated by Lunts and Orlov in the celebrated article \cite{luntsorlov}. The procedure used to obtain such a result has been generalized for other cases: Canonaco and Stellari worked on coherent sheaves of a quasi-projective scheme supported in a projective subscheme \cite{canonacostellari}; Olander studied the case of smooth proper varieties over a field \cite{olanderorlov} (and later expanded his result in \cite{olanderphd}); Li, Pertusi, and Zhao considered the case of Kuznetsov components \cite{lipertusizhao}.

Currently, there is only one explicit example of a triangulated category with a unique but not strongly unique enhancement (see \cite[Corollary 5.4.12]{jassomuro}).
However, we do not know whether the uniqueness of enhancements implies the strong uniqueness of enhancements for derived categories or homotopy categories of complexes. It is worth noting that the examples of triangulated categories with a unique enhancement are by far more general: the most recent paper in this direction is \cite{canonaco2021uniqueness}, where it is proved that all the derived categories and all the homotopy categories of complexes over an abelian category have a unique enhancement.

For this reason, we are motivated to characterize strong uniqueness. With this aim, we define the original notions of triangulated formal DG-category, which is a relaxed version of intrinsic formality, and formally standard DG-category, inspired by D-standard and K-standard categories introduced by Chen and Ye in \cite{chenye}. When we restrict to graded categories, we have the following.

\par \vspace{1ex}
\noindent \textbf{\ref{thm:main}. Theorem.} \emph{Let $\mathsf{B}$ be a graded category. The following are equivalent:
                \begin{enumerate}
                    \item $\mathsf{B}$ is triangulated formal and formally standard;
                    \item $\tr(\mathsf{B})$ has a strongly unique enhancement;
                    \item $\dc{\mathsf{B}}$ has a strongly unique enhancement.
                \end{enumerate}}
\par \vspace{1ex}
In the statement, $\tr(\mathsf{B})$ indicates the triangulated category generated by $\mathsf{B}$ and $\dc{\mathsf{B}}$ is the idempotent completion of $\tr(\mathsf{B})$. In fact, the implication $1 \Rightarrow 2,3$ holds for more general DG-categories (see Proposition \ref{prop:sffssue} and Remark \ref{rem:perfsffssue}). It is very important to highlight that these results do not require linearity over a field (see Convention \ref{conv:field}).

First simple applications of the result are discussed in Section \ref{sec:freegen}. In Corollary \ref{cor:stronguniquefree} and Example \ref{es:Kmod}, we deal with particular cases of periodic triangulated categories, i.e. triangulated categories such that $[n]\cong \id$ for some integer $n$ (see \cite{saito23}).
In Section \ref{sec:kdstandard}, we show that K-standardness and D-standardness (Definition \ref{def:kdstandard}) are linked to formal standardness, and prove the following statements.

\par \vspace{1ex}
\noindent \textbf{\ref{prop:kfstandard}. Proposition.} \emph{An additive category $\mathcal{A}$ is K-standard if and only if $\kcomb(\mathcal{A})$ has a strongly unique enhancement.}
\par \vspace{1ex}

\noindent \textbf{\ref{thm:exdstand}. Theorem.} \emph{An exact category $\mathcal{E}$ is D-standard if and only if $\derb(\mathcal{E})$ has a strongly unique enhancement.}
\par \vspace{1ex}
These results follow from the fact that $\kcomb(\mathcal{A})$ and $\derb(\mathcal{E})$ have a (semi-strongly) unique enhancement for every choice of $\mathcal{A}$ additive and $\mathcal{E}$ exact (see Proposition \ref{prop:kcomba} and Proposition \ref{prop:derbssu}). We emphasize that the theorem above extends \cite[Theorem 5.10]{chenye}, which is valid for the category of finitely generated modules over a finite-dimensional $\field$-algebra, with $\field$ a field.

As an application, all bounded derived categories of hereditary abelian categories have a strongly unique enhancement (Corollary \ref{cor:hereditary}). For instance, $\derb(\Mod(\mathbb{Z}))$, the bounded derived category of abelian groups, has a strongly unique enhancement. Moreover, all geometric examples provided by the theory show D-standardness in order to conclude that the derived category has a strongly unique enhancement (see \cite{luntsorlov,canonacostellari,olanderorlov,olanderphd,lipertusizhao}). This follows by proving that the abelian category has an almost ample set, which is a generalization of the ample sequences introduced by Orlov in \cite{orlov1}.
\par \vspace{1ex}
\noindent \textbf{\ref{prop:almostample}. Proposition.} \emph{Let $\mathcal{A}$ be an abelian category with an almost ample set. Then $\derb(\mathcal{A})$ has a strongly unique enhancement.}
\par \vspace{2ex}

\noindent {\large \textbf{Overview}}\hspace{2ex} In Section \ref{sec:predg} and Section \ref{sec:dgenh}, we recall some basic notions and results of pretriangulated DG-categories and DG-enhancements. Section \ref{sec:sf} is devoted to the notion of triangulated formality of DG-categories, which generalizes intrinsic formality of graded algebras, while Section \ref{sec:fs} describes formal standardness. The main result, linking triangulated formality and formal standardness to strong uniqueness of DG-enhancements, is proved in Section \ref{sec:maintheorem}; applications are discussed in Section \ref{sec:freegen} and Section \ref{sec:kdstandard}. Appendix \ref{app:derbex} contains two results on bounded derived categories of exact categories.

\begin{conv}\label{conv:field}    
    All categories are assumed to be $\mathbb{U}$-small for an appropriate Grothendieck universe $\mathbb{U}$. 

    Moreover, we consider everything to be $\ring$-linear, where $\ring$ is a commutative ring. We reserve the capital letter $\field$ for fields.
    We adopt the term \emph{ring} when linearity is considered over a commutative ring, and \emph{algebra} for linearity over a field. In particular, we will talk about graded rings and graded algebras in the two contexts. 
\end{conv}
    \section{Pretriangulated DG-categories}\label{sec:predg}
    We start by recalling some basics on pretriangulated DG-categories. The non-expert reader may refer to \cite{chen2021informal,kelondg,toenondg,yekutieli} for detailed treatments on DG-categories.
    \begin{defn}
        A \emph{DG-category} is a $\ring$-linear category $\mathsf{C}$ such that $\hom(X,Y)$ is a $\mathbb{Z}$-graded $\ring$-module together with a differential $d:\hom(X,Y)\to \hom(X,Y)$ of degree 1 for any $X,Y \in \mathsf{C}$. The compositions $\hom(X,Y) \otimes \hom(Y,Z)\to \hom(X,Z)$ are morphisms of complexes ($X,Y,Z \in \mathsf{C}$).

        A morphism $f:X\to Y$ in a DG-category is \emph{closed} if $d(f)=0$, whereas it is \emph{homogeneous} if it belongs to $\hom(X,Y)^i \subset \hom(X,Y)$, the set of morphisms of degree $i$, for some $i\in \mathbb{Z}$. We will use $\abs{\cdot}$ to denote the degree of a homogeneous morphism.

        A \emph{DG-isomorphism} is a closed morphism of degree 0 with an inverse.
    \end{defn}
    \begin{defn}
        A functor $\mathsf{f}:\mathsf{C}\to \mathsf{D}$ between DG-categories is a \emph{DG-functor} if, for every $X,Y \in \mathsf{C}$,
    \[
    \mathsf{f}_{X,Y}  : \hom(X,Y) \to \hom (FX,FY) 
    \]
    is a morphism of degree 0 commuting with the differentials. We say that a DG-functor $\mathsf{f}$ is a \emph{DG-equivalence} if it is fully faithful and every object of $\mathsf{D}$ is DG-isomorphic to $\mathsf{f}(X)$ for some $X \in \mathsf{C}$.
    \end{defn}

    \begin{defn}\label{def:dgfun}
        Let $\mathsf{C},\mathsf{D}$ be two DG-categories. We define the \emph{functor DG-category}, denoted by $\dgfun(\mathsf{C},\mathsf{D})$, as follows: 
        \begin{itemize}
            \item The objects are the DG-functors $\mathsf{f}:\mathsf{C} \to \mathsf{D}$; 
            \item A morphism $\eta: \mathsf{f} \to \mathsf{g}$ of degree $i$ is a family of morphisms $(\eta_X: \mathsf{f}X \to \mathsf{g}X)_{X \in \mathsf{C}}$ of degree $i$ in $\mathsf{D}$ such that
        \begin{equation}\label{eq:dgnat}
        \mathsf{g}(h) \eta_X = (-1)^{\abs{\eta} \abs{h}} \eta_Y \mathsf{f}(h)  
        \end{equation}
        for any homogeneous $h:X \to Y$. With this definition, we can create $\hom(\mathsf{f}, \mathsf{g})$ as a graded module whose $i$-th component is the set of all morphisms of degree $i$. 
        The differential $(d\eta)_X := d(\eta_X)$
        gives $\dgfun(\mathsf{C}, \mathsf{D})$ the structure of a DG-category.
        \end{itemize}
        Morphisms of $\dgfun(\mathsf{C},\mathsf{D})$ are also called \emph{DG-natural transformations}. Similarly, its DG-isomorphisms are also called \emph{DG-natural isomorphisms}.
        \end{defn}
        \begin{rem}
            One may show that a DG-functor $\mathsf{f}:\mathsf{C}\to \mathsf{D}$ is a DG-equivalence if and only if there exists an inverse up to DG-natural isomorphisms. This result is analogous to the classical one for the equivalence of categories.
        \end{rem}
        \begin{defn}
            Let $\mathsf{C}$ be a DG-category. The \emph{opposite DG-category} $\mathsf{C}^o$ has the same objects of $\mathsf{C}$, morphisms have opposite direction $\hom_{\mathsf{C}^o}(X,Y) := \hom_{\mathsf{C}}(Y,X)$,
            and composition is reversed up to a change of sign:
            $
            f \circ_{\mathsf{C}^o} g := (-1)^{\abs{f}\abs{g}} g \circ_{\mathsf{C}} f$,
            where $f,g$ are homogeneous.
        \end{defn}
        \begin{defn}The \emph{homotopy category} $H^0(\mathsf{C})$ of a DG-category $\mathsf{C}$  has the same objects of $\mathsf{C}$ and 
            the morphisms are defined by $\hom_{H^0(\mathsf{C})}(X,Y):= H^0(\hom_{\mathsf{C}}(X,Y))$. Two objects $X,Y$ in $\mathsf{C}$ are \emph{homotopy equivalent} if they are isomorphic in $H^0(\mathsf{C})$.

The \emph{graded homotopy category} $H^*(\mathsf{C})$ has the same objects of $\mathsf{C}$ and $\hom_{H^*(\mathsf{C})} (X,Y):= H^*(\hom_{\mathsf{C}}(X,Y))$.

            Any DG-functor $\mathsf{f}:\mathsf{C}\to \mathsf{D}$ gives rise to functors $H^0(\mathsf{f}):H^0(\mathsf{C}) \to H^0 (\mathsf{D})$ and $H^*(\mathsf{f}):H^*(\mathsf{C})\to H^*(\mathsf{D})$.
            \end{defn}

            \begin{defn}\label{def:truncation}
                Let $\mathsf{C}$ be a DG-category. Its \emph{truncation} is the pair $(\tau_{\le 0}\mathsf{C},\mathsf{p}_{\le 0})$, where $\tau_{\le 0}\mathsf{C}$ is the DG-category whose objects are the same of $\mathsf{C}$ and
                \[
                 \hom_{\tau_{\le 0}\mathsf{C}} (X,Y)^n:= \begin{cases}
                     \hom(X,Y)^n & \text{if }n<0\\
                     \ker d^0_{\hom(X,Y)} & \text{if }n=0\\
                     0 & \text{if }n>0
                 \end{cases}   
                \]
                for every $X,Y \in \mathsf{C}$, while $\mathsf{p}_{\le 0}:\tau_{\le 0}\mathsf{C} \to \mathsf{C}$ is the natural DG-functor which is the identity on objects and the inclusion on morphisms.
             \end{defn}
             \begin{rem}\label{rem:lefttruncation}
                Dually, one may expect to define the truncation $(\tau_{\ge 0}\mathsf{C}, \mathsf{p}_{\ge 0}: \mathsf{C} \to \tau_{\ge 0} \mathsf{C})$ as follows: 
                \[ 
                \hom_{\tau_{\ge 0}\mathsf{C}}(X,Y)^n:= 
                \begin{cases}
                    0 & \text{if }n<0\\
                    \coker d^{-1}_{\hom(X,Y)} & \text{if }n=0\\
                    \hom(X,Y)^n & \text{if }n>0.
                \end{cases}  
                \] 
                However, we are unable to understand whether such a na\"{\i}ve construction makes sense in general, since the composition may not respect the cokernel.
                Moreover, even if this was the case, $\mathsf{p}_{\ge 0}$ is not always well-defined: pick two composable homogeneous morphisms $f$ and $g$, respectively of degree $-i$ and $n+i$ with $n,i>0$, such that $gf\ne 0$. Then $gf=\mathsf{p}_{\ge 0}(gf)\ne \mathsf{p}_{\ge 0}(g)\mathsf{p}_{\ge 0}(f)= \mathsf{p}_{\ge 0}(g) 0 =0$.\footnote{The author would like to thank Amnon Neeman and one of the referees for pointing out the impossibility of this dual truncation.}
        
                Despite such a situation, in the case of $\tau_{\le 0}\mathsf{C}$ the left truncation exists, because the DG-functor $\tau_{\le 0}\mathsf{C}\to H^0(\mathsf{C})$ is actually well-defined, since the above obstruction cannot happen.
             \end{rem}
             \begin{rem}\label{rem:truncation}
                Given a DG-functor $\mathsf{f}: \mathsf{C} \to \mathsf{D}$, we have a natural DG-functor $\tau_{\le 0}\mathsf{f}: \tau_{\le 0} \mathsf{C} \to \tau_{\le 0} \mathsf{D}$ satisfying $\mathsf{fp}_{\le 0} = \mathsf{p}_{\le 0}\tau_{\le 0}\mathsf{f}$. 
                \end{rem}

            \begin{defn}\label{es:cdg}
                Let $\mathcal{A}$ be a ($\ring$-linear) category. The \emph{DG-category of complexes (of $\mathcal{A}$)}, denoted with $\comdg{\mathcal{A}}$, is described as follows:
                \begin{itemize}
                    \item Its objects are complexes $M=(M^i,d_M^i)_{i \in \mathbb{Z}}$, i.e. sequences 
                    \[
                        \begin{tikzcd}
                            \dots \ar[r]& M^{i-1}\ar[r,"d_M^{i-1}"] & M^i \ar[r,"d_M^i"] & M^{i+1}\ar[r]&\dots 
                        \end{tikzcd}
                    \]
                    with $\lbrace M^i \rbrace_{i\in \mathbb{Z}}\subset \mathcal{A}$ and $d_M^{i}d_M^{i-1}=0$ for all $i$.
                    \item Given $M,N$ complexes, the $\ell$-th homogeneous morphisms are \[\hom_{\comdg{\mathcal{A}}}(M,N)^\ell:=\prod_{i \in \mathbb{Z}} \hom_{\mathcal{A}}(M^i , N^{\ell+i}).\]
                    The differential is defined by
                    \[
               d(f):= d_N \circ f - (-1)^{\abs{f}} f d_M
                        \]
                        for any homogeneous element $f$.
                \end{itemize}
                Analogously, we can define the \emph{DG-category of bounded complexes} $\comdgb{\mathcal{A}}$ as the full DG-subcategory of $\comdg{\mathcal{A}}$ whose objects are bounded complexes, i.e. $M=(M^i,d^i)$ with $M^i=0$ for $\abs{i} \gg 0$. 

                An important example is given by $\operatorname{Mod}(\ring)$, the category of all $\ring$-modules; $\comdg{\operatorname{Mod}(\ring)}$ is called \emph{the category of DG-modules}.
            \end{defn}
        
        \begin{defn}
        Let $\mathsf{C}$ be a DG-category. The \emph{DG-category of (right) DG $\mathsf{C}$-modules} is defined by $\dgmod{\mathsf{C}}:=\dgfun(\mathsf{C}^o,\comdg{\operatorname{Mod}(\ring)})$.
        \end{defn}
        \begin{lem}[Yoneda DG-embedding]\label{lem:yoneda}
            The Yoneda embedding
            \[
            \mathsf{y} : \mathsf{C} \to \dgmod{\mathsf{C}}  : X \mapsto \hom_{\mathsf{C}}(-,X)   
            \]    
            is a fully faithful DG-functor.
        \end{lem}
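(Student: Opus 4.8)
The statement is the DG-analogue of the classical Yoneda lemma, so the plan is to mimic the usual argument; the only genuine work lies in the bookkeeping of the Koszul signs forced by the sign in $\circ_{\mathsf{C}^o}$ and by the differential on hom-complexes in $\comdg{\Mod(\ring)}$.

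First I would check that for each $X$ the assignment $\mathsf{y}X=\hom_{\mathsf{C}}(-,X)$ really is an object of $\dgmod{\mathsf{C}}=\dgfun(\mathsf{C}^o,\comdg{\Mod(\ring)})$. On objects it sends $Y$ to the complex of $\ring$-modules $\hom_{\mathsf{C}}(Y,X)$; on a homogeneous morphism $g\in\hom_{\mathsf{C}^o}(Y,Y')=\hom_{\mathsf{C}}(Y',Y)$ it sends $g$ to precomposition $h\mapsto(-1)^{\abs{g}\abs{h}}hg$, which is a morphism of degree $\abs{g}$ between the relevant complexes. A short computation using the Leibniz rule for $d$ in $\mathsf{C}$ shows that $g\mapsto\mathsf{y}X(g)$ commutes with differentials, and the associativity of composition in $\mathsf{C}$ together with the sign convention $f\circ_{\mathsf{C}^o}g=(-1)^{\abs{f}\abs{g}}g\circ_{\mathsf{C}}f$ shows it is functorial on $\mathsf{C}^o$; hence $\mathsf{y}X$ is a DG-functor, i.e.\ an object of $\dgmod{\mathsf{C}}$.

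Next I would define $\mathsf{y}$ on morphisms and check it is a DG-functor. For homogeneous $f\in\hom_{\mathsf{C}}(X,X')$ set $(\mathsf{y}f)_Y\colon h\mapsto fh$ (postcomposition, no sign). Verifying that $\mathsf{y}f$ satisfies the DG-naturality condition \eqref{eq:dgnat} against an arbitrary homogeneous morphism of $\mathsf{C}^o$ is another associativity-plus-sign check, and it exhibits $\mathsf{y}f$ as a DG-natural transformation of degree $\abs{f}$. By construction the resulting map $\mathsf{y}_{X,X'}\colon\hom_{\mathsf{C}}(X,X')\to\hom_{\dgmod{\mathsf{C}}}(\mathsf{y}X,\mathsf{y}X')$ is $\ring$-linear, degree-preserving, and preserves identities and composition; that it commutes with differentials is precisely the assertion that composition in $\mathsf{C}$ is a chain map, which is again the Leibniz rule. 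So $\mathsf{y}$ is a DG-functor.

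Finally, for full faithfulness I would produce an explicit two-sided inverse to the chain map $\mathsf{y}_{X,X'}$: send a DG-natural transformation $\eta\colon\mathsf{y}X\to\mathsf{y}X'$ of degree $i$ to the element $\eta_X(\id_X)\in\hom_{\mathsf{C}}(X,X')^i$, i.e.\ evaluation at the identity. This is manifestly $\ring$-linear and degree-preserving. One composite is immediate: $(\mathsf{y}f)_X(\id_X)=f\id_X=f$. For the other, fix $Y$ and a homogeneous $h\in\hom_{\mathsf{C}}(Y,X)$; applying \eqref{eq:dgnat} for $\eta$ to the morphism $h$ viewed in $\hom_{\mathsf{C}^o}(X,Y)$ and evaluating both sides at $\id_X$ yields $\eta_Y(h)=\eta_X(\id_X)\,h=(\mathsf{y}(\eta_X(\id_X)))_Y(h)$, so $\mathsf{y}_{X,X'}(\eta_X(\id_X))=\eta$. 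Hence $\mathsf{y}_{X,X'}$ is a degree-$0$, $\ring$-linear bijection commuting with differentials, i.e.\ an isomorphism in $\comdg{\Mod(\ring)}$, and $\mathsf{y}$ is fully faithful. The one delicate point is to choose the Koszul signs in the definitions of $\mathsf{y}X$ on morphisms and of $(\mathsf{y}f)_Y$ so that all three sign computations — compatibility with $d$ for $\mathsf{y}X$ and for $\mathsf{y}_{X,X'}$, and the Yoneda identity — close up under the single convention above; once that convention is fixed, each verification is routine.
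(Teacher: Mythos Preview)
Your argument is correct and follows the standard Yoneda-lemma pattern adapted to the DG setting, with the Koszul signs handled consistently. The paper itself does not supply a proof of this lemma; it is stated without proof as a recalled fact from the basic theory of DG-categories, so there is nothing to compare against beyond noting that your approach is the expected one.
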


    \begin{defn}\label{def:suspension}
        Let $\mathsf{C}$ be a DG-category.
        Given $X \in \mathsf{C}$, its \emph{suspension} is an object $Y \in \mathsf{C}$ equipped with two closed morphisms $\sigma_X :Y \to X$ of degree 1 and $\tau_X : X\to Y$ of degree $-1$ such that $\tau_X\sigma_X=\id_Y$ and $\sigma_X \tau_X=\id_X$. In this case, we will write $\Sigma(X):=Y$, since $Y$ is determined up to unique DG-isomorphism.
    
    \end{defn}
    \begin{defn}\label{def:conedg}
        Let $f:X\to Y$ be a closed morphism of degree 0 in a DG-category $\mathsf{C}$. The \emph{cone} of $f$ is an object $\conedg(f)$ together with 4 morphisms of degree 0
        \[
        \begin{tikzcd}[column sep=large]
            Y \ar[r,"j"] & \conedg(f)\ar[l,bend left, dashed, "q"] \ar[r,"p"] & \Sigma(X)\ar[l,bend left,dashed,"i"]
        \end{tikzcd}
        \]
    satisfying 
    \[
        pi=\id,\quad qj=\id,\quad qi=0,\quad pj=0,\quad ip+jq=\id
    \]
    and 
    \[
    d(j)=d(p)=0,\quad d(i)=jf\sigma_X,\quad d(q)=-f\sigma_X p.    
    \]
    We emphasize that $j$ and $p$ are required to be closed, while $q$ and $i$ are generally not.
    \end{defn}
    
    \begin{defn}\label{def:strong_pretriangulated}
        A DG-category $\mathsf{C}$ is \emph{strongly pretriangulated} if it is closed under cones and suspensions, meaning that:
        \begin{itemize}
            \item Any object $X\in \mathsf{C}$ has a suspension $Y\in \mathsf{C}$, and there exists $Z\in \mathsf{C}$ such that $X$ is a suspension for $Z$;
            \item Any closed morphism of degree 0 has a cone.
        \end{itemize}
    \end{defn}

    The previous definitions are inspired by the structure of DG-modules. In fact, one can prove the following well-known result.
    

    \begin{prop}\label{prop:dgmodtri}
        For any DG-category $\mathsf{C}$,  $\dgmod{\mathsf{C}}$ is strongly pretriangulated.
    \end{prop}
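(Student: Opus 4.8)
The plan is to build both suspensions and cones objectwise, reducing everything to the target DG-category $\mathsf{D} := \comdg{\operatorname{Mod}(\ring)}$; recall that $\dgmod{\mathsf{C}} = \dgfun(\mathsf{C}^o, \mathsf{D})$ by definition. I would prove two independent statements: (I) $\mathsf{D}$ is strongly pretriangulated, via an \emph{explicit} suspension and cone; (II) whenever $\mathsf{D}$ is strongly pretriangulated, so is $\dgfun(\mathsf{C}^o, \mathsf{D})$, the suspension, the cone, and all the structure morphisms of Definition \ref{def:conedg} being defined objectwise. Combining the two yields the proposition; in fact step (I) works verbatim for $\comdg{\mathcal{A}}$ with $\mathcal{A}$ any additive category, giving that such $\comdg{\mathcal{A}}$ is strongly pretriangulated.

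For step (I): the suspension of a complex $M$ is $\Sigma(M)$ with $\Sigma(M)^i := M^{i+1}$ and $d_{\Sigma(M)} := -d_M$, while $\sigma_M : \Sigma(M) \to M$ of degree $1$ and $\tau_M : M \to \Sigma(M)$ of degree $-1$ are the obvious families of identity maps; one checks at once that both are closed for the Hom-differential and that $\tau_M\sigma_M = \id_{\Sigma(M)}$, $\sigma_M\tau_M = \id_M$. Since $\Sigma$ is invertible ($\Sigma^{-1}(M)^i = M^{i-1}$), every object is also a suspension. Given a chain map $f : M \to N$ (a closed degree-$0$ morphism), set $\conedg(f) := N \oplus \Sigma(M)$ with differential $\left(\begin{smallmatrix} d_N & f\sigma_M \\ 0 & d_{\Sigma(M)} \end{smallmatrix}\right)$, and take $j, q$ to be the inclusion and projection of the summand $N$, and $i, p$ those of $\Sigma(M)$. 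A short block-matrix computation gives $pi = \id$, $qj = \id$, $qi = 0$, $pj = 0$, $ip + jq = \id$, together with $d(j) = d(p) = 0$, $d(i) = jf\sigma_M$, $d(q) = -f\sigma_M p$; the matrix squares to zero precisely because $f$ is a chain map and $\sigma_M$ is closed.

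For step (II): given $M \in \dgfun(\mathsf{C}^o, \mathsf{D})$, define $\Sigma(M)$ on objects by $X \mapsto \Sigma(M(X))$ and on a homogeneous $h : X \to Y$ by $\Sigma(M)(h) := (-1)^{\abs{h}}\,\tau_{M(Y)}\,M(h)\,\sigma_{M(X)}$; the twisting sign $(-1)^{\abs{h}}$ is exactly what makes $\Sigma(M)$ into a DG-functor. The families $\sigma_M := (\sigma_{M(X)})_X$ and $\tau_M := (\tau_{M(X)})_X$ then satisfy the twisted naturality \eqref{eq:dgnat} (again because of that sign) and are mutually inverse, so $(\Sigma(M), \sigma_M, \tau_M)$ is a suspension; invertibility of $\Sigma$ on $\mathsf{D}$ makes every object a suspension. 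For a closed degree-$0$ DG-natural transformation $f : M \to N$, set $\conedg(f)(X) := \conedg(f_X)$, define the action on a homogeneous $h : X \to Y$ by $\conedg(f)(h) := i_Y\,\Sigma(M)(h)\,p_X + j_Y\,N(h)\,q_X$, and take $i, j, p, q$ objectwise. Then $\conedg(f)$ is a DG-functor and $i, j, p, q$ are DG-natural transformations, the cone relations and $d(i) = jf\sigma_M$, $d(q) = -f\sigma_M p$ holding objectwise by step (I).

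The real content — and the step I expect to be the main obstacle — is the sign bookkeeping confirming that these objectwise formulas define honest DG-functors and DG-natural transformations. The decisive check is $d(\conedg(f)(h)) = \conedg(f)(dh)$: expanding by the Leibniz rule and using $d(i_Y) = j_Y f_Y \sigma_{M(Y)}$, $d(q_X) = -f_X\sigma_{M(X)}p_X$, the naturality \eqref{eq:dgnat} of $f$ in the form $N(h)f_X = f_Y M(h)$ (valid since $\abs{f} = 0$), and $\sigma\tau = \id = \tau\sigma$, one is left with two terms proportional to $j_Y N(h)\,d(q_X)$, with coefficients $(-1)^{\abs{h}}$ and $(-1)^{\abs{h}+1}$, which cancel exactly because of the twisting sign built into $\Sigma(M)(h)$. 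The remaining verifications — functoriality of $\conedg(f)$ and the relations and \eqref{eq:dgnat} for $i, j, p, q$ — are mechanical. Since the statement is classical, I would display this one representative computation and refer to \cite{kelondg, yekutieli} for the rest.
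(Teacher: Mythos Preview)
Your argument is correct and follows the standard route: verify suspensions and cones explicitly in $\comdg{\operatorname{Mod}(\ring)}$, then transport them objectwise to $\dgfun(\mathsf{C}^o,\mathsf{D})$ with the appropriate sign twist on $\Sigma(M)(h)$. The paper gives no proof of this proposition at all, stating it only as a well-known result, so there is nothing to compare; your write-up in fact also subsumes the paper's later Example~\ref{es:comdgpretr} (that $\comdg{\mathcal{A}}$ is strongly pretriangulated for any additive $\mathcal{A}$), which the paper likewise leaves as a direct check.
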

    \begin{defn}\label{def:pretrclosure}
        For any DG-category $\mathsf{C}$, its \emph{pretriangulated closure} is the smallest full DG-subcategory $\mathsf{C}^{\pretr}$ of $\dgmod{\mathsf{C}}$ that contains $\mathsf{y}(\mathsf{C})$ and is strongly pretriangulated. In particular, notice that the Yoneda embedding factors through $\mathsf{C}^{\pretr}$. By an abuse of notation, we call $\mathsf{y} :\mathsf{C}\hookrightarrow \mathsf{C}^{\pretr}$ the natural embedding so obtained. One may prove that a DG-category $\mathsf{C}$ is strongly pretriangulated if and only if $\mathsf{y}:\mathsf{C} \to \mathsf{C}^{\pretr}$ is a DG-equivalence (see \cite[Lemma 6.2.2]{chen2021informal}).
    \end{defn}
    \begin{rem}\label{rem:extfunpretr}
        Given a DG-functor $\mathsf{f}: \mathsf{C} \to \mathsf{D}$, we have an induced DG-functor $\mathsf{f}^{\pretr} : \mathsf{C}^{\pretr} \to \mathsf{D}^{\pretr}$ given by restricting $\Ind(\mathsf{f}):\dgmod{\mathsf{C}} \to \dgmod{\mathsf{D}}$, defined in \cite[C.9]{drinfeld}.
    \end{rem}

    \begin{es}\label{es:comdgpretr}
        A direct check shows that $\comdg{\mathcal{A}}$ is a strongly pretriangulated DG-category for any additive category $\mathcal{A}$. The same holds for $\comdgb{\mathcal{A}}$.
    \end{es}
    An important tool for our studies is the following universal property of the pretriangulated closure.
    \begin{prop}\label{prop:dgextension}\emph{\cite[Section 4.5]{kelondg}}
        Let $\mathsf{C}, \mathsf{D}$ be DG-categories, with $\mathsf{D}$ strongly pretriangulated. Then the embedding $\mathsf{y}:\mathsf{C} \hookrightarrow \mathsf{C}^{\pretr}$ induces a DG-equivalence $\dgfun(\mathsf{C}^{\pretr}, \mathsf{D}) \to \dgfun(\mathsf{C},\mathsf{D})$.
        
        More explicitly, 
        
        \noindent\emph{(Essential surjectivity).} Any DG-functor $\mathsf{f}: \mathsf{C} \to \mathsf{D}$ admits an extension $\mathsf{f}' : \mathsf{C}^{\pretr} \to \mathsf{D}$.

        \noindent\emph{(Fully faithfulness).} Let us consider another DG-functor $\mathsf{g}: \mathsf{C} \to \mathsf{D}$ and an extension $\mathsf{g}': \mathsf{C}^{\pretr}\to \mathsf{D}$. Then any DG-natural transformation $\eta: \mathsf{f} \to \mathsf{g}$ extends to a unique DG-natural transformation $\eta':\mathsf{f}' \to \mathsf{g}'$. In particular, if $\eta$ is a DG-natural isomorphism, then $\eta'$ is a DG-natural isomorphism. 
    \end{prop}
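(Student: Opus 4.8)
The plan is to prove the two asserted properties separately: essential surjectivity is deduced quickly from the material already in place, while fully faithfulness is proved by an explicit induction along the construction of $\mathsf{C}^{\pretr}$, which is where essentially all the work lies.

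\emph{Essential surjectivity.} Given $\mathsf{f}:\mathsf{C}\to\mathsf{D}$, Remark \ref{rem:extfunpretr} produces $\mathsf{f}^{\pretr}:\mathsf{C}^{\pretr}\to\mathsf{D}^{\pretr}$ with $\mathsf{f}^{\pretr}\circ\mathsf{y}_{\mathsf{C}}\cong\mathsf{y}_{\mathsf{D}}\circ\mathsf{f}$ (the $\Ind$ construction identifies values at representables). Since $\mathsf{D}$ is strongly pretriangulated, $\mathsf{y}_{\mathsf{D}}:\mathsf{D}\to\mathsf{D}^{\pretr}$ is a DG-equivalence (Definition \ref{def:pretrclosure}), hence has a DG-quasi-inverse $\mathsf{q}$; then $\mathsf{f}':=\mathsf{q}\circ\mathsf{f}^{\pretr}:\mathsf{C}^{\pretr}\to\mathsf{D}$ satisfies $\mathsf{f}'\circ\mathsf{y}_{\mathsf{C}}=\mathsf{q}\circ\mathsf{f}^{\pretr}\circ\mathsf{y}_{\mathsf{C}}\cong\mathsf{q}\circ\mathsf{y}_{\mathsf{D}}\circ\mathsf{f}\cong\mathsf{f}$ through a composite of DG-natural isomorphisms. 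So every object of $\dgfun(\mathsf{C},\mathsf{D})$ lies in the DG-isomorphism class of something in the image of the restriction functor.

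\emph{Fully faithfulness.} Filter $\mathsf{C}^{\pretr}=\bigcup_{n\ge0}\mathsf{C}_n$, where $\mathsf{C}_0=\mathsf{y}(\mathsf{C})$ and $\mathsf{C}_{n+1}$ adjoins to $\mathsf{C}_n$ a chosen suspension and desuspension of each object and a chosen cone $\conedg(h)$ of each closed degree-$0$ morphism $h$ between objects of $\mathsf{C}_n$; the union is a strongly pretriangulated full DG-subcategory of $\dgmod{\mathsf{C}}$ containing $\mathsf{y}(\mathsf{C})$, hence equals $\mathsf{C}^{\pretr}$, and every object or morphism of $\mathsf{C}^{\pretr}$ lies in some $\mathsf{C}_n$. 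Fix DG-functors $\mathsf{f}',\mathsf{g}':\mathsf{C}^{\pretr}\to\mathsf{D}$. The crucial point is that, because the axioms of Definitions \ref{def:suspension} and \ref{def:conedg} are equalities of morphisms preserved by any DG-functor, $\mathsf{f}'$ carries the suspension data $(\sigma_X,\tau_X)$ to suspension data for $\mathsf{f}'X$ and the cone data $(j,p,i,q)$ of $\conedg(h)$ to cone data for $\mathsf{f}'(h)$, and likewise for $\mathsf{g}'$. Writing \eqref{eq:dgnat} for a transformation $\eta:\mathsf{f}'\to\mathsf{g}'$ of degree $\ell$ against these structure morphisms — using $ip+jq=\id$ in the cone case and $\tau_X\sigma_X=\id$, $\sigma_X\tau_X=\id$ in the suspension case — forces
\[
\eta_{\conedg(h)}=\mathsf{g}'(j)\,\eta_B\,\mathsf{f}'(q)+\mathsf{g}'(i)\,\eta_{\Sigma A}\,\mathsf{f}'(p)\quad(h\colon A\to B),\qquad \eta_{\Sigma X}=(-1)^{\ell}\,\mathsf{g}'(\tau_X)\,\eta_X\,\mathsf{f}'(\sigma_X).
\]
By induction on $n$ this already shows that $\eta$ is determined by its restriction to $\mathsf{C}_0=\mathsf{y}(\mathsf{C})$, i.e. $\eta\mapsto\eta\mathsf{y}$ is injective; conversely, starting from any $\theta:\mathsf{f}'\mathsf{y}\to\mathsf{g}'\mathsf{y}$ one \emph{defines} a candidate extension $\eta$ by the same formulas, level by level.

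\emph{Main obstacle.} What remains — and where the real difficulty sits — is to check that the inductively defined $\eta$ is genuinely a DG-natural transformation: that \eqref{eq:dgnat} holds for \emph{every} homogeneous morphism of $\mathsf{C}^{\pretr}$, not just the structure maps used in the definition, and that $d(\eta_X)=(d\eta)_X$. Both go by induction on $n$. For naturality, any homogeneous morphism out of $\conedg(h)$ (resp. into $\Sigma X$, resp. into $\conedg(h)$) is reconstructed from its composites with $i,j$ (resp. $\sigma_X$, resp. $p,q$), which are morphisms among objects of $\mathsf{C}_n$, so the required square collapses to squares valid at level $n$, modulo Koszul-sign bookkeeping. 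For differential-compatibility, the identities $d(i)=jh\sigma_A$, $d(q)=-h\sigma_A p$ of Definition \ref{def:conedg} (and $d(\sigma_X)=d(\tau_X)=0$) reduce $d(\eta_{\conedg(h)})$ to an expression in the already-controlled $d(\eta_B)$, $d(\eta_{\Sigma A})$. One also checks independence of the chosen presentations, a formal consequence of the forced formulas together with naturality applied to the DG-isomorphisms relating two presentations of the same object. Granting this, $\eta\mapsto\eta\mathsf{y}$ is bijective on homogeneous morphisms of each degree and visibly commutes with the differential, hence is a DG-isomorphism $\hom(\mathsf{f}',\mathsf{g}')\xrightarrow{\ \sim\ }\hom(\mathsf{f}'\mathsf{y},\mathsf{g}'\mathsf{y})$; and if $\eta\mathsf{y}$ is a DG-natural isomorphism then so is $\eta$, since the forced formulas turn the (closed, degree-$0$) inverse of $\eta\mathsf{y}$ into an inverse of $\eta$. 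The only genuinely delicate ingredient is thus the sign-careful verification of \eqref{eq:dgnat} and of $d(\eta_X)=(d\eta)_X$ for the extended transformation; everything else is routine.
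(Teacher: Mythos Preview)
Your proposal is correct and follows essentially the same approach as the paper: the extension $\mathsf{f}'=\mathsf{y}^{-1}\mathsf{f}^{\pretr}$ for essential surjectivity, and the forced formulas $\eta'_{\Sigma M}=\mathsf{g}'(\tau_M)\eta_M\mathsf{f}'(\sigma_M)$ (up to your Koszul sign, which is indeed needed in nonzero degree) and $\eta'_{\conedg(h)}=\mathsf{g}'(j)\eta_B\mathsf{f}'(q)+\mathsf{g}'(i)\eta_{\Sigma A}\mathsf{f}'(p)$ followed by the verification of \eqref{eq:dgnat} via the decomposition $ip+jq=\id$. The paper carries out that sign computation explicitly in two steps (first for morphisms $P\to\conedg(\varphi)$ with $P\in\mathsf{C}$, then for arbitrary $\nu:\conedg(\varphi_1)\to\conedg(\varphi_2)$), while you outline it and additionally flag the differential-compatibility and presentation-independence checks that the paper leaves implicit.
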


    \begin{defn}
        A DG-functor $\mathsf{f}:\mathsf{C}\to \mathsf{D}$ is 
        \begin{itemize}
            \item \emph{quasi-fully faithful} if the induced graded functor $H^*(\mathsf{f}):H^*(\mathsf{C}) \to H^*(\mathsf{D})$ is fully faithful;
            \item \emph{quasi-essentially surjective} if $H^0(\mathsf{f}): H^0(\mathsf{C}) \to H^0(\mathsf{D})$ is essentially surjective;
            \item a \emph{quasi-equivalence} if it is quasi-fully faithful and quasi-essentially surjective.
        \end{itemize}
        From now on, $\simeq$ is used to indicate that a DG-functor is a quasi-equivalence.
    \end{defn}
    \begin{rem}\label{rem:h0qespretr}
        If a DG-functor $\mathsf{f}:\mathsf{C}\to \mathsf{D}$ is a quasi-equivalence, then also $\mathsf{f}^{\pretr}$ is a quasi-equivalence (see \cite[Proposition 2.5]{drinfeld}). Similarly, if $\mathsf{f}$ is fully faithful, then $\mathsf{f}^{\pretr}$ is fully faithful as well.

        Moreover, using suspensions, one can show that a DG-functor $\mathsf{f}$ between strongly pretriangulated DG-categories is a quasi-equivalence if and only if $H^0(\mathsf{f})$ is an equivalence. 
    \end{rem}
    
    \begin{defn}\label{def:qfunctor}
        A \emph{quasi-functor} $\mathsf{f}:\mathsf{C}\to \mathsf{D}$ is a (suitable equivalence class of) zig-zag of DG-functors
        \[
        \begin{tikzcd}
            & \mathsf{C}_1 \arrow{ld}[sloped,above]{\simeq}\ar[rd] && \dots \arrow{ld}[sloped,above]{\simeq}\ar[rd]& \\
            \mathsf{C}&&\mathsf{C}_2&&\mathsf{D}.
        \end{tikzcd}
        \]
        A \emph{quasi-equivalence quasi-functor} is a quasi-functor given by a zig-zag of quasi-equivalences. Two DG-categories are \emph{quasi-equivalent} if there exists a quasi-equivalence quasi-functor.
        In particular, a quasi-functor is a morphism of the homotopy category with respect to the model structure on the category of DG-categories with weak equivalences the quasi-equivalences (cf. \cite[Definition 1.2.1]{hovey} and 
        \cite{tabuada}).\footnote{In the literature, quasi-functors are generally a zig-zag of DG-functors as above, and our definition of quasi-functor is often called an isomorphism class of quasi-functors.}
    \end{defn}
    \begin{rem} 
        According to Remark \ref{rem:extfunpretr} and Remark \ref{rem:h0qespretr}, for every quasi-functor $\mathsf{f}: \mathsf{C}\to \mathsf{D}$ we have an induced quasi-functor $\mathsf{f}^{\pretr} : \mathsf{C}^{\pretr} \to \mathsf{D}^{\pretr}$. 
    \end{rem}
    \begin{rem}\label{rem:0qf}
        The only quasi-functor $\mathsf{f}:\mathsf{A}\to \mathsf{B}$ such that $H^0(\mathsf{f})=0$ is the trivial one.

        Indeed, if $H^0(\mathsf{f})=0$, then the image of $\mathsf{f}$ is quasi-equivalent to $0$. We conclude that $\mathsf{f} = 0$ by the following commutative diagram:
        \[
        \begin{tikzcd}
            \mathsf{A} \ar[rr,bend left=40, "\mathsf{f}"]\ar[r]\ar[dr] & \operatorname{im} (\mathsf{f}) \ar[r,hook]\ar[d,"\simeq"] & \mathsf{B}.\\
            & 0 &
        \end{tikzcd}    
        \]
    \end{rem}
    \begin{defn}
    A DG-category $\mathsf{C}$ is \emph{pretriangulated} if $H^0(\mathsf{y}):H^0(\mathsf{C}) \to H^0(\mathsf{C}^{\pretr})$ is an equivalence. 
    \end{defn}
    \begin{prop}\emph{\cite[Proposition 1]{bondalkapranov}.}
        Let $\mathsf{C}$ be a pretriangulated DG-category. Then 
        $H^0(\mathsf{C})$ is a triangulated category. 
    \end{prop}
    \begin{rem}
    The second part of Remark \ref{rem:h0qespretr} can be generalized to quasi-functors between pretriangulated DG-categories using $H^0(\mathsf{y})$.
    \end{rem}
    \begin{defn}
        Let $\mathsf{C}$ be a DG-category. The \emph{DG-category of semi-free DG-modules} $\semifree (\mathsf{C})$ is the full DG-subcategory of $\dgmod{\mathsf{C}}$ whose objects have a filtration of free DG $\mathsf{C}$-modules, i.e.
        \[
        0 = M_0 \subset M_1 \subset \dots \subset M_{i-1}\subset M_i \subset \dots = M     
        \]
        such that $M_i / M_{i-1}$ is isomorphic to a direct sum of $\Sigma^n\mathsf{y}(X)$ for some $n \in \mathbb{Z}$ and $X \in \mathsf{C}$.
    \end{defn}
    
    \begin{defn}
        Let $\mathsf{C}$ be a DG-category. The \emph{DG-category of perfect complexes} $\perf(\mathsf{C})$ is the full DG-subcategory of $\semifree(\mathsf{C})$ whose objects are homotopy equivalent to a direct summand of $\mathsf{C} ^{\pretr}$. 
    \end{defn}
    \begin{rem}
        In the same fashion of Remark \ref{rem:extfunpretr}, 
        any DG-functor $\mathsf{f}: \mathsf{C} \to \mathsf{D}$ admits extensions $\perf(\mathsf{f}) : \perf(\mathsf{C})\to \perf(\mathsf{D})$ and $\semifree(\mathsf{f}): \semifree(\mathsf{C}) \to \semifree (\mathsf{D})$, given by restricting the DG-functor $\Ind(\mathsf{f}):\dgmod{\mathsf{C}} \to \dgmod{\mathsf{D}}$, defined in \cite[C.9]{drinfeld}.
    \end{rem}
    \begin{rem}
        The following chain of inclusions of strongly pretriangulated DG-categories holds:
        \[
        \mathsf{C}^{\pretr} \subset \perf (\mathsf{C}) \subset \semifree(\mathsf{C}) \subset \dgmod{\mathsf{C}}.    
        \]
    \end{rem}
     \begin{nota}\label{nota:basictri}
    Let $\mathsf{C}$ be a DG-category. We will use the following notation: 
    \[\tr(\mathsf{C}):= H^0(\mathsf{C}^{\pretr}), \quad \dc{\mathsf{C}}:=H^0 (\perf (\mathsf{C})), \quad \derdg(\mathsf{C}):= H^0 (\semifree(\mathsf{C})).
    \]
    \end{nota}
    \begin{rem}\label{rem:dc}
        The notation $\derdg (\mathsf{C})$ for $H^0(\semifree ({\mathsf{C}}))$ is motivated by \cite[Theorem 3.1 and Section 4.1]{kellerderiving} (see also \cite[Appendices B and C]{drinfeld}), where it is shown that this triangulated category is in fact equivalent to the Verdier quotient of $H^0(\dgmod{\mathsf{C}})$ by the subcategory of acyclic DG-modules.

        Also, the notation $\dc{\mathsf{C}}$ comes from the fact that $H^0(\perf(\mathsf{C}))$ is given by all the \emph{compact objects} of $\derdg(\mathsf{C})$ (\cite{k1}). We recall that an object $X$ in a triangulated category with all coproducts is compact if $\hom(X,-)$ commutes with coproducts. 
        
        Another important fact to keep in mind is that $\dc{\mathsf{C}}$ is the idempotent closure of $\tr(\mathsf{C})$.
    \end{rem}

    \begin{defnp}\cite[Theorem 1.6.2]{drinfeld}.\label{defp:dgquotient}
        Let $\mathsf{C}$ be a DG-category and $\mathsf{B}\subset \mathsf{C}$ a full DG-subcategory. A \emph{DG-quotient},  often denoted by $\mathsf{C}/\mathsf{B}$, is a DG-category $\mathsf{D}$ together with a quasi-functor $\mathsf{q}:\mathsf{C}\to \mathsf{D}$ satisfying the following equivalent properties:
        \begin{enumerate}
          \item The functor $H^0(\mathsf{q})$ is essentially surjective and $H^0(\mathsf{q}^{\pretr})$ induces a triangulated equivalence $\tr(\mathsf{C})/\tr(\mathsf{B}) \to \tr(\mathsf{D})$.
          \item For every DG-category $\mathsf{K}$, the category of quasi-functors $\mathsf{D}\to \mathsf{K}$ is equivalent by composition to the category of quasi-functors $\mathsf{C}\to \mathsf{K}$ such that $\mathsf{B}\to \mathsf{C}\to \mathsf{K}$ is zero (see \cite[Appendix E]{drinfeld} for a discussion on these categories).
        \end{enumerate}
        The DG-quotient is determined up to quasi-equivalence, i.e. given another DG-quotient $\mathsf{D}'$ with $\mathsf{q}': \mathsf{C}\to \mathsf{D}'$, we can find a quasi-equivalence quasi-functor $\mathsf{f}: \mathsf{D}\to \mathsf{D}'$ such that $\mathsf{q}' \cong \mathsf{fq}$.
      \end{defnp}
      \begin{rem}
      With the same notation above, we can choose $\mathsf{D}$ so that $\mathsf{q}$ becomes a DG-functor. Additionally, if $\mathsf{C}$ is pretriangulated, then so is $\mathsf{D}$. The reader may refer to \cite[Remark 1.4 and Lemma 1.5]{luntsorlov}. 
      \end{rem}

    \begin{es}[Main examples]\label{es:homder}
        \item 
        \begin{itemize}
            \item Let $\mathcal{A}$ be any additive category. Let us consider $\comdg{\mathcal{A}}$, whose homotopy category is the homotopy category of complexes $\kcom(\mathcal{A})$. By Proposition \ref{prop:dgextension} and Remark \ref{rem:h0qespretr}, the inclusion $\mathcal{A} \to \comdg{\mathcal{A}}$ extends to a fully faithful DG-functor $\mathcal{A}^{\pretr} \to \comdg{\mathcal{A}}$ because $\comdg{\mathcal{A}}$ is strongly pretriangulated (as noted in Example \ref{es:comdgpretr}). Therefore, $\tr(\mathcal{A})$ is equivalent to a triangulated subcategory of $\kcom(\mathcal{A})$. Moreover, it is the triangulated envelope of $\mathcal{A}$.\footnote{The notion of triangulated envelope is defined in \cite[Section 2]{bondlarslunts}.} This suffices to conclude that $\tr(\mathcal{A})\cong \kcomb(\mathcal{A})$, since $\kcomb(\mathcal{A})$ is the triangulated envelope of $\mathcal{A}$ in $\kcom(\mathcal{A})$. In addition, $\comdgb{\mathcal{A}}$ is quasi-equivalent to $\mathcal{A}^{\pretr}$.
            
            For a general ($\ring$-linear) category $\mathcal{A}$, the same argument proves that $\tr(\mathcal{A})$ is equivalent to the bounded homotopy category of complexes in its additive closure.
            \item Let $\mathcal{E}$ be an exact category.  
              Let $\acb_{\dg}{(\mathcal{E})}$ be the full DG-subcategory of $\comdgb{\mathcal{E}}$ whose objects are acyclic complexes. 
            Consider the DG-quotient $\derb_{\dg} (\mathcal{E}):=\comdgb{\mathcal{E}} / \acb_{\dg}{(\mathcal{E})}$.
            Its homotopy category is equivalent to $\derb(\mathcal{E})$ since 
              \[H^0(\derb_{\dg} (\mathcal{E}))\cong H^0(\comdgb{\mathcal{E}}) / H^0(\acb_{\dg}{(\mathcal{E})})= \kcomb(\mathcal{E})/\acb(\mathcal{E}),\]
            and $\derb(\mathcal{E})$ is by definition the Verdier quotient $\kcomb(\mathcal{E})/\acb(\mathcal{E})$ (see, for instance, \cite[Chapter 10]{buhler} for an introduction to derived categories of exact categories).
        \end{itemize}
    \end{es}

    \section{DG-enhancements}\label{sec:dgenh}
    In this section, we state some properties and results about DG-enhancements. In particular, we introduce the notion of lift to discuss the relation between strong uniqueness of enhancements and autoequivalences.
    \begin{defn}
        Let $\mathcal{T}$ be a triangulated category. A \emph{(DG-)enhancement} is a pair $(\mathsf{C},E)$ where $\mathsf{C}$ is a pretriangulated DG-category and $E:H^0(\mathsf{C}) \to \mathcal{T}$ is a triangulated equivalence. If a triangulated category admits a (DG-)enhancement, it is called \emph{algebraic}.
    \end{defn}
    \begin{defn}
        An algebraic triangulated category $\mathcal{T}$ has a \emph{unique (DG-)enhancement} if, given two enhancements $(\mathsf{C},E)$ and $(\mathsf{C}',E')$, there exists a quasi-equivalence quasi-functor $\mathsf{f}:\mathsf{C} \to \mathsf{C}'$. In other words, there exists a zig-zag of quasi-equivalences
        \[
        \begin{tikzcd}
            & \mathsf{D}_1 \arrow{ld}[sloped,above]{\simeq}\ar[rd,"\simeq" sloped] && \dots \arrow{ld}[sloped,above]{\simeq}\ar[rd,"\simeq" sloped]& \\
            \mathsf{C}&&\mathsf{D}_2&&\mathsf{C}'.
        \end{tikzcd}
        \]
    \end{defn}
    \begin{defn}
        An algebraic triangulated category $\mathcal{T}$ has a \emph{strongly unique enhancement} (respectively, \emph{semi-strongly unique}) if, given two enhancements $(\mathsf{C},E)$ and $(\mathsf{C}',E')$, there exists a quasi-equivalence quasi-functor $\mathsf{f}:\mathsf{C} \to \mathsf{C}'$ such that $E\cong E' H^0(\mathsf{f})$ (respectively, $E(X)\cong E' H^0(\mathsf{f})(X)$ for all $X \in \mathsf{C}$).
    \end{defn}
    
    We want to interpret strong and semi-strong uniqueness of enhancements via autoequivalences of the triangulated category. Let us start with an unconventional definition.
    
    \begin{defn}
        Let $F: \mathcal{T}\to \mathcal{T}'$ be a triangulated functor between algebraic triangulated categories. Given enhancements $(\mathsf{C},E)$ and $(\mathsf{C}',E')$ of $\mathcal{T}$ and $\mathcal{T}'$ respectively, a \emph{$(\mathsf{C},E)-(\mathsf{C}',E')$-lift} of $F$ is a quasi-functor $\mathsf{f}:\mathsf{C}\to \mathsf{C}'$ such that $F\cong E' H^0(\mathsf{f}) E^{-1}$. 
        We say that $\mathsf{f}$ is a \emph{$(\mathsf{C},E)$-lift} of $F$ when $\mathcal{T}=\mathcal{T}'$ and $(\mathsf{C}',E')=(\mathsf{C},E)$; and $F$ has a \emph{good DG-lift} if for every enhancement $(\mathsf{C},E)$, there exists a $(\mathsf{C},E)$-lift for $F$. 
        
        Similarly, a \emph{$(\mathsf{C},E)-(\mathsf{C}',E')$-semilift} of $F$ is a quasi-functor  $\mathsf{f}:\mathsf{C}\to \mathsf{C}'$ such that $F(X)\cong E' H^0(\mathsf{f}) E^{-1}(X)$ for any $X \in \mathcal{T}$. In the same fashion as above, we define \emph{$(\mathsf{C},E)$-semilift} and \emph{good DG-semilift}.    
    \end{defn}
    
    \begin{prop}\label{prop:strongenh}
    Let $\mathcal{T}$ be an algebraic triangulated category with a unique enhancement. The following are equivalent.
    \begin{enumerate}
    \item $\mathcal{T}$ has a strongly unique enhancement.
    \item Every triangulated autoequivalence $F:\mathcal{T} \to \mathcal{T}$ has a good DG-lift.
    \item There exists an enhancement $(\mathsf{C},E)$ such that any triangulated autoequivalence $F:\mathcal{T} \to \mathcal{T}$ has a $(\mathsf{C},E)$-lift.
    \item There exist two enhancements $(\mathsf{C},E)$ and $(\mathsf{C}',E')$ such that any triangulated autoequivalence $F:\mathcal{T} \to \mathcal{T}$ has a $(\mathsf{C},E)-(\mathsf{C}',E')$-lift.
    \end{enumerate}
    \end{prop}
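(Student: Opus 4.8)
The plan is to prove the cycle of implications $1 \Rightarrow 2 \Rightarrow 3 \Rightarrow 4 \Rightarrow 1$, which is the natural chain since each successive statement weakens the previous one except for the closing implication. The implications $2 \Rightarrow 3$ and $3 \Rightarrow 4$ are essentially trivial: a good DG-lift provides, by definition, a $(\mathsf{C},E)$-lift for every enhancement $(\mathsf{C},E)$ and in particular for one fixed choice (using that $\mathcal{T}$ is algebraic, so an enhancement exists); and a single enhancement $(\mathsf{C},E)$ with the lifting property yields the pair $(\mathsf{C},E)$, $(\mathsf{C}',E')=(\mathsf{C},E)$ satisfying the requirement in $4$ (noting that a $(\mathsf{C},E)$-lift is the same as a $(\mathsf{C},E)-(\mathsf{C},E)$-lift). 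So the content is in $1 \Rightarrow 2$ and $4 \Rightarrow 1$.

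For $1 \Rightarrow 2$: let $F: \mathcal{T} \to \mathcal{T}$ be a triangulated autoequivalence and $(\mathsf{C},E)$ any enhancement. Then $(\mathsf{C}, F \circ E)$ is again an enhancement of $\mathcal{T}$, since $F \circ E : H^0(\mathsf{C}) \to \mathcal{T}$ is a composite of triangulated equivalences. By strong uniqueness applied to the two enhancements $(\mathsf{C}, E)$ and $(\mathsf{C}, F\circ E)$, there is a quasi-equivalence quasi-functor $\mathsf{f}: \mathsf{C} \to \mathsf{C}$ with $E \cong (F E) H^0(\mathsf{f})$, i.e. $F^{-1} E \cong E H^0(\mathsf{f})$, hence $F \cong E H^0(\mathsf{f}^{-1}) E^{-1}$ — so $\mathsf{f}^{-1}$ (which is again a quasi-equivalence quasi-functor, hence a quasi-functor) is a $(\mathsf{C},E)$-lift of $F$. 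Since $(\mathsf{C},E)$ was arbitrary, $F$ has a good DG-lift. (One should double-check the direction of the isomorphism and that a quasi-equivalence quasi-functor is invertible in the homotopy category of DG-categories, which is standard.)

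For $4 \Rightarrow 1$: this is the main obstacle, and it is where uniqueness of the enhancement (the running hypothesis of the proposition) gets used. Suppose $(\mathsf{C},E)$ and $(\mathsf{C}',E')$ are enhancements such that every triangulated autoequivalence of $\mathcal{T}$ has a $(\mathsf{C},E)-(\mathsf{C}',E')$-lift. Let $(\mathsf{D}_1, E_1)$ and $(\mathsf{D}_2, E_2)$ be two arbitrary enhancements of $\mathcal{T}$; we must produce a quasi-equivalence quasi-functor $\mathsf{h}: \mathsf{D}_1 \to \mathsf{D}_2$ with $E_1 \cong E_2 H^0(\mathsf{h})$. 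By uniqueness of the enhancement there are quasi-equivalence quasi-functors $\mathsf{u}: \mathsf{C} \to \mathsf{D}_1$ and $\mathsf{u}': \mathsf{C}' \to \mathsf{D}_2$. These give triangulated equivalences $H^0(\mathsf{u}), H^0(\mathsf{u}')$, but they need not be compatible with $E, E', E_1, E_2$; the discrepancy is measured by a triangulated autoequivalence. Concretely, set $F := E' \, H^0(\mathsf{u}')^{-1} \, E_2^{-1} \, E_1 \, H^0(\mathsf{u}) \, E^{-1} : \mathcal{T} \to \mathcal{T}$, a composite of triangulated equivalences. By hypothesis $F$ has a $(\mathsf{C},E)-(\mathsf{C}',E')$-lift $\mathsf{f}: \mathsf{C} \to \mathsf{C}'$, so $F \cong E' H^0(\mathsf{f}) E^{-1}$. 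Now $\mathsf{f}$ is a quasi-equivalence (since $H^0(\mathsf{f}) \cong (E')^{-1} F E$ is an equivalence and one checks it is then automatically a quasi-equivalence, using that $\mathsf{C}, \mathsf{C}'$ are pretriangulated), hence so is the composite $\mathsf{h} := \mathsf{u}' \circ \mathsf{f} \circ \mathsf{u}^{-1} : \mathsf{D}_1 \to \mathsf{D}_2$, a quasi-equivalence quasi-functor. Chasing the definitions,
\[
E_2 H^0(\mathsf{h}) \cong E_2 \, H^0(\mathsf{u}') \, H^0(\mathsf{f}) \, H^0(\mathsf{u})^{-1} \cong E_2 \, H^0(\mathsf{u}') \, (E')^{-1} F E \, H^0(\mathsf{u})^{-1} \cong E_1,
\]
where the last step substitutes the definition of $F$ and cancels. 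Hence $\mathsf{h}$ witnesses strong uniqueness.

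The one genuine subtlety throughout is the passage between "$H^0(\mathsf{f})$ is an equivalence" and "$\mathsf{f}$ is a quasi-equivalence quasi-functor" for (quasi-functors between) pretriangulated DG-categories; this is exactly the generalization of Remark \ref{rem:h0qespretr} to the pretriangulated setting, recorded in the remark following the definition of pretriangulated DG-category, and it is what lets us transport the equivalence $F$ of triangulated categories back into an honest quasi-equivalence of enhancements. Everything else is bookkeeping with composites of equivalences and natural isomorphisms.
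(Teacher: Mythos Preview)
Your proof is correct. The main difference from the paper is the direction of the cycle: you prove $1\Rightarrow 2\Rightarrow 3\Rightarrow 4\Rightarrow 1$, whereas the paper runs $1\Rightarrow 4\Rightarrow 3\Rightarrow 2\Rightarrow 1$. Your orientation makes $2\Rightarrow 3$ and $3\Rightarrow 4$ genuinely trivial, at the price of a slightly heavier $4\Rightarrow 1$ (you need two comparison quasi-equivalences $\mathsf{u},\mathsf{u}'$ to bridge the fixed pair $(\mathsf{C},E),(\mathsf{C}',E')$ to the arbitrary pair $(\mathsf{D}_1,E_1),(\mathsf{D}_2,E_2)$). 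The paper's orientation spreads the work more evenly: its $4\Rightarrow 3$ and $3\Rightarrow 2$ are short conjugation arguments, and its $2\Rightarrow 1$ needs only a single bridge. The underlying mechanism is identical in both: twist an enhancement $(\mathsf{C},E)$ by an autoequivalence to get $(\mathsf{C},FE)$, and use the standing uniqueness hypothesis to compare arbitrary enhancements with the distinguished ones. One minor simplification in your $1\Rightarrow 2$: if you apply strong uniqueness to the ordered pair $(\mathsf{C},FE),(\mathsf{C},E)$ rather than $(\mathsf{C},E),(\mathsf{C},FE)$, you obtain directly $FE\cong E H^0(\mathsf{f})$ and avoid inverting $\mathsf{f}$.
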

    This result should be well-known to experts, although we cannot find a proper reference.
    \begin{proof}
    1 $\Rightarrow$ 4. It suffices to take some arbitrary enhancements $(\mathsf{C},FE)$ and $(\mathsf{C}',E')$. Indeed, by hypothesis, we can find $\mathsf{f}$ such that $FE\cong E' H^0(\mathsf{f})$, so $F \cong E' H^0(\mathsf{f})E^{-1}$.

    4 $\Rightarrow$ 3. Let us consider a quasi-equivalence quasi-functor $\mathsf{g}: \mathsf{C}' \to \mathsf{C}$. This gives an equivalence $G=EH^0(\mathsf{g})(E')^{-1}:\mathcal{T}\to \mathcal{T}$. We now consider $\mathsf{h}$ a $(\mathsf{C},E)-(\mathsf{C}',E')$-lift for $G^{-1}F$. The quasi-functor $\mathsf{gh}:\mathsf{C}\to \mathsf{C}$ is then a $(\mathsf{C},E)$-lift for $F$:
    \begin{equation*}
    \begin{split}
    EH^{0}(\mathsf{gh}) E^{-1} &\cong EH^0(\mathsf{g})H^0(\mathsf{h})E^{-1} \\
    & \cong EH^0(\mathsf{g})(E')^{-1} E' H^0(\mathsf{h})E^{-1}\\
    & \cong G G^{-1} F\cong F
    \end{split}
    \end{equation*}

    3 $\Rightarrow$ 2. Let $(\mathsf{C}',E')$ be another enhancement and let $\mathsf{g}:\mathsf{C}'\to \mathsf{C}$ be a quasi-equivalence quasi-functor. We denote with $L$ the autoequivalence $\mathcal{T} \to \mathcal{T}$ given by $EH^0(\mathsf{g})(E')^{-1}$. From assumption, $LFL^{-1}$ has a $(\mathsf{C},E)$-lift $\mathsf{h}$. The quasi-functor $\mathsf{g}^{-1}\mathsf{hg}: \mathsf{C}' \to \mathsf{C}'$ is the wanted $(\mathsf{C}',E')$-lift: 
    \begin{equation*}
    \begin{split}
    E'H^0(\mathsf{g}^{-1}\mathsf{hg})(E')^{-1} &\cong E' H^0(\mathsf{g}^{-1})H^0(\mathsf{h})H^0(\mathsf{g})(E')^{-1} \\
    &\cong L^{-1} EH^0(\mathsf{h}) E^{-1} L \\
    &\cong L^{-1} LFL^{-1} L \cong F.
    \end{split}
    \end{equation*}

    2 $\Rightarrow$ 1. Let $(\mathsf{C},E)$ and $(\mathsf{C}', E')$ be two enhancements. By assumption, we have a quasi-equivalence quasi-functor $\mathsf{g}:\mathsf{C} \to \mathsf{C}'$. Now we consider the equivalence $E H^0(\mathsf{g})^{-1} (E')^{-1}: \mathcal{T} \to \mathcal{T}$. Since it has a good DG-lift, we can consider its $(\mathsf{C},E)$-lift $\mathsf{f}$, satisfying $EH^0(\mathsf{f})E^{-1} \cong E H^0(\mathsf{g})^{-1} (E')^{-1}$. From this isomorphism of functors, we obtain $E' H^0(\mathsf{gf}) \cong E$.
    \end{proof}
    
    Analogously to the previous proposition, one can show the following.
    \begin{prop}\label{prop:semistrongenh}
        Let $\mathcal{T}$ be an algebraic triangulated category with a unique enhancement. The following are equivalent.
        \begin{enumerate}
        \item $\mathcal{T}$ has a semi-strongly unique enhancement.
        \item Every triangulated autoequivalence $F:\mathcal{T} \to \mathcal{T}$ has a good DG-semilift.
        \item There exists an enhancement $(\mathsf{C},E)$ such that any triangulated autoequivalence $F:\mathcal{T} \to \mathcal{T}$ has a $(\mathsf{C},E)$-semilift.
        \item There exist two enhancements $(\mathsf{C},E)$ and $(\mathsf{C}',E')$ such that any triangulated autoequivalence $F:\mathcal{T} \to \mathcal{T}$ has a $(\mathsf{C},E)-(\mathsf{C}',E')$-semilift.
        \end{enumerate}
        \end{prop}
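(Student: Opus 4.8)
The plan is to replay the proof of Proposition~\ref{prop:strongenh} almost word for word, systematically weakening every ``isomorphism of functors'' to a ``pointwise isomorphism of objects'', and to prove the same cycle $1 \Rightarrow 4 \Rightarrow 3 \Rightarrow 2 \Rightarrow 1$. For $1 \Rightarrow 4$ one takes the enhancements $(\mathsf{C}, FE)$ and $(\mathsf{C}', E')$; semi-strong uniqueness yields a quasi-equivalence quasi-functor $\mathsf{f}$ with $FE(X) \cong E'H^0(\mathsf{f})(X)$ for every $X$, whence $F(Z) \cong E'H^0(\mathsf{f})E^{-1}(Z)$ for every $Z \in \mathcal{T}$, that is, $\mathsf{f}$ is a $(\mathsf{C},E)-(\mathsf{C}',E')$-semilift of $F$.

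The implications $4 \Rightarrow 3$ and $3 \Rightarrow 2$ rest on the single observation that semilifts are closed under composition and conjugation, and this is the one place where the weakening must be handled with (a little) care; I regard it as the main, and essentially only, point of the proof. Precisely: if $\mathsf{h}$ is a $(\mathsf{C},E)-(\mathsf{C}',E')$-semilift of a functor $H$ and $\mathsf{g}$ is a $(\mathsf{C}',E')-(\mathsf{C}'',E'')$-semilift of $G$, then from $H(X) \cong E'H^0(\mathsf{h})E^{-1}(X)$ one applies the \emph{functor} $G$ — which sends isomorphic objects to isomorphic objects — to get $GH(X) \cong G\bigl(E'H^0(\mathsf{h})E^{-1}(X)\bigr) = E''H^0(\mathsf{g})H^0(\mathsf{h})E^{-1}(X) = E''H^0(\mathsf{gh})E^{-1}(X)$, so $\mathsf{gh}$ is a semilift of $GH$. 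Granting this, the exact manipulations of the proof of Proposition~\ref{prop:strongenh} — precomposing with a chosen quasi-equivalence in $4 \Rightarrow 3$, and conjugating by the autoequivalence $L = EH^0(\mathsf{g})(E')^{-1}$ in $3 \Rightarrow 2$ — carry over verbatim at the level of objects.

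For $2 \Rightarrow 1$, given two enhancements the unique-enhancement hypothesis provides a quasi-equivalence quasi-functor $\mathsf{g}\colon \mathsf{C} \to \mathsf{C}'$, one takes a good DG-semilift $\mathsf{f}$ of $EH^0(\mathsf{g})^{-1}(E')^{-1}$, and one checks that $\mathsf{gf}$ witnesses semi-strong uniqueness. Here the extra ingredient, besides the closure property above, is that an equivalence \emph{reflects} isomorphism of objects: from $EH^0(\mathsf{f})E^{-1}(Z) \cong EH^0(\mathsf{g})^{-1}(E')^{-1}(Z)$ for all $Z$ one cancels $E$ to obtain $H^0(\mathsf{f})(X) \cong H^0(\mathsf{g})^{-1}(E')^{-1}E(X)$ for all $X$, then applies $H^0(\mathsf{g})$ and $E'$ to conclude $E'H^0(\mathsf{gf})(X) \cong E(X)$. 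Since the two bookkeeping facts used — functors preserve, and equivalences reflect, isomorphism of objects — are entirely elementary, I do not expect a genuine obstacle; the actual work is only to verify that every isomorphism invoked in the proof of Proposition~\ref{prop:strongenh} is, or can be replaced by, one applied objectwise.
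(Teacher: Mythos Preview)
Your proposal is correct and follows exactly the approach the paper intends: the paper itself gives no separate proof, writing only ``Analogously to the previous proposition, one can show the following'', so your adaptation of the cycle $1\Rightarrow 4\Rightarrow 3\Rightarrow 2\Rightarrow 1$ from Proposition~\ref{prop:strongenh} with functor isomorphisms weakened to objectwise isomorphisms is precisely what is meant. The one point worth tightening is notational: in your composition argument a few ``$=$'' should be ``$\cong$'' (for instance $(E')^{-1}E'$ is only naturally isomorphic to the identity, and $H^0(\mathsf{g})H^0(\mathsf{h})\cong H^0(\mathsf{gh})$ for quasi-functors), but this does not affect the validity of the argument.
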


            \begin{nota}\label{nota:restrictdg}
                Let $(\mathsf{C},E)$ be an enhancement of a triangulated category $\mathcal{T}$, and let $\mathcal{S} \subset \mathcal{T}$ be a full subcategory (not necessarily triangulated). With the notation $\mathsf{C}_{\mid \mathcal{S}}^E$ we mean the full DG-subcategory of $\mathsf{C}$ whose objects $X$ are such that $E(X)\cong Y \in \mathcal{S}$. In particular, $E H^0(\mathsf{C}^E_{\mid \mathcal{S}})$ is equivalent to $\mathcal{S}$. When there is no misunderstanding, we simply write $\mathsf{C}_{\mid \mathcal{S}}$ instead of $\mathsf{C}_{\mid \mathcal{S}}^E$.
            \end{nota}
            \begin{lem}\label{lem:restrictdg}
                Let $(\mathsf{C},E)$ be an enhancement of a triangulated category $\mathcal{T}$ and consider $\mathcal{S} \subset \mathcal{T}$ a full subcategory. Then $\mathsf{C}_{\mid \mathcal{S}}$ is closed under homotopy equivalence in $\mathsf{C}$.
            \end{lem}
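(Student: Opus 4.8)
The plan is to reduce the statement to the single fact that $E$ is an equivalence and hence preserves isomorphisms. First I would unwind the definitions: by Notation \ref{nota:restrictdg}, $\mathsf{C}_{\mid \mathcal{S}}$ is the \emph{full} DG-subcategory of $\mathsf{C}$ on those objects $X$ with $E(X) \cong Y$ for some $Y \in \mathcal{S}$, so "being closed under homotopy equivalence" is a condition on objects only; and two objects $X, X' \in \mathsf{C}$ are homotopy equivalent precisely when they are isomorphic in $H^0(\mathsf{C})$.

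Next I would run the one-line argument. Suppose $X \in \mathsf{C}_{\mid \mathcal{S}}$ and $X' \in \mathsf{C}$ is homotopy equivalent to $X$, i.e. $X \cong X'$ in $H^0(\mathsf{C})$. Applying the functor $E \colon H^0(\mathsf{C}) \to \mathcal{T}$ gives $E(X) \cong E(X')$ in $\mathcal{T}$. Since $X \in \mathsf{C}_{\mid \mathcal{S}}$, there is $Y \in \mathcal{S}$ with $E(X) \cong Y$; composing the two isomorphisms yields $E(X') \cong Y \in \mathcal{S}$, which is exactly the defining condition for $X' \in \mathsf{C}_{\mid \mathcal{S}}$. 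This finishes the proof.

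I do not expect any genuine obstacle here; the only point worth flagging is that the definition of $\mathsf{C}_{\mid \mathcal{S}}$ already builds in closure under isomorphism in $\mathcal{T}$ (it asks for $E(X) \cong Y \in \mathcal{S}$, not for $E(X)$ to lie in $\mathcal{S}$ on the nose), so no hypothesis on $\mathcal{S}$ beyond being a full subcategory is needed, and the argument uses nothing about $E$ except that it is a functor on $H^0(\mathsf{C})$ — the triangulated-equivalence structure is irrelevant for this particular lemma.
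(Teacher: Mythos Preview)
Your proof is correct and follows essentially the same argument as the paper's: both unwind the definition of homotopy equivalence as isomorphism in $H^0(\mathsf{C})$, apply $E$ to carry this to an isomorphism in $\mathcal{T}$, and compose with the witnessing isomorphism $E(X)\cong Y\in\mathcal{S}$. Your closing observation that only the functoriality of $E$ (not the full triangulated-equivalence structure) is used is accurate and slightly sharpens the paper's phrasing.
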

            \begin{proof}
                Let $Y \in \mathsf{C}$ be homotopy equivalent to $X \in \mathsf{C}_{\mid \mathcal{S}}$. In other words, $X$ and $Y$ are isomorphic in $H^0(\mathsf{C})$. Since an equivalence sends isomorphisms to isomorphisms, $E(Y)\cong E(X)$. By definition, there exists $Z \in \mathcal{S}$ such that $E(X) \cong Z$. We obtain $E(Y)\cong Z$, which implies that $Y \in \mathsf{C}_{\mid \mathcal{S}}$.
            \end{proof}

            We now investigate some relations between uniqueness of enhancements of the triangulated categories associated to a DG-category. We start with a technical lemma.
            \begin{lem}\label{lem:dgtaylor}
                Let $\mathcal{T}$ be a triangulated category and $\mathsf{A}$ be a DG-category. If \[F:\mathcal{T} \to H^0(\dgmod{\mathsf{A}})\] is a full triangulated functor and $H^0(\mathsf{A})\subset \essim(F)$, then $\tr(\mathsf{A}) \subset \essim(F)$.
            
                Moreover, if $\mathcal{T}$ is idempotent complete and $G:\mathcal{T} \to H^0(\dgmod{\mathsf{A}})$ is a fully faithful triangulated functor such that $H^0(\mathsf{A})\subset \essim(G)$, then  $\dc{\mathsf{A}} \subset \essim(G)$.
            \end{lem}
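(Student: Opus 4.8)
The plan is to observe that the essential image of a full triangulated functor into $H^0(\dgmod{\mathsf{A}})$ is automatically a strictly full triangulated subcategory, and then to invoke the minimality of $\tr(\mathsf{A})$ --- and, in the idempotent-complete case, of $\dc{\mathsf{A}}$ --- among the triangulated subcategories of $H^0(\dgmod{\mathsf{A}})$ containing $H^0(\mathsf{A})$. Concretely, for the first part write $\mathcal{U}:=H^0(\dgmod{\mathsf{A}})$ and $\mathcal{S}:=\essim(F)\subset\mathcal{U}$. By definition $\mathcal{S}$ is closed under isomorphism, and it is closed under shifts since $F$ commutes with suspension. Fullness is used exactly once, to get closure under cones: given $X\cong F(X')$, $Y\cong F(Y')$ in $\mathcal{S}$ and a morphism $f\colon X\to Y$, transport $f$ along the chosen isomorphisms to a morphism $F(X')\to F(Y')$, lift it through $F$ to some $g\colon X'\to Y'$, complete $g$ to a triangle in $\mathcal{T}$ and apply $F$; this exhibits $\cone(f)$ as isomorphic to $F(\cone(g))\in\mathcal{S}$. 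Thus $\mathcal{S}$ is a strictly full triangulated subcategory of $\mathcal{U}$ containing (the Yoneda image of) $H^0(\mathsf{A})$. Since $\tr(\mathsf{A})=H^0(\mathsf{A}^{\pretr})$ is the triangulated subcategory of $\mathcal{U}$ generated by $H^0(\mathsf{A})$ --- this being the content of the universal property of $\mathsf{A}^{\pretr}$ together with the fact that $\conedg$ in $\dgmod{\mathsf{A}}$ models mapping cones in $\mathcal{U}$, cf.\ Proposition \ref{prop:dgmodtri} --- it is the smallest such subcategory, hence contained in $\mathcal{S}=\essim(F)$.

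For the second part, a fully faithful functor is in particular full, so the first part already yields $\tr(\mathsf{A})\subset\essim(G)$; it remains to check that $\essim(G)$ is closed under direct summands in $\mathcal{U}$. Suppose $Z$ is a summand of $G(X)$ for some $X\in\mathcal{T}$, with associated idempotent $e\in\homend_{\mathcal{U}}(G(X))$. By fullness $e=G(\varepsilon)$ for a unique $\varepsilon\in\homend_{\mathcal{T}}(X)$, and faithfulness turns $e^2=e$ into $\varepsilon^2=\varepsilon$; since $\mathcal{T}$ is idempotent complete, $\varepsilon$ splits, giving $X\cong X_1\oplus X_2$ with $\varepsilon$ the projection onto $X_1$. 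Applying $G$ identifies $Z$ with $G(X_1)\in\essim(G)$. Therefore $\essim(G)$ contains the idempotent closure of $\tr(\mathsf{A})$ inside $\mathcal{U}$, which is precisely $\dc{\mathsf{A}}$ by Remark \ref{rem:dc}, and the inclusion $\dc{\mathsf{A}}\subset\essim(G)$ follows.

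There is no genuinely hard step here; the argument is essentially bookkeeping. The point that must be handled with care is the identification of $\tr(\mathsf{A})$ (resp.\ $\dc{\mathsf{A}}$) with the triangulated subcategory (resp.\ its idempotent closure) generated inside $H^0(\dgmod{\mathsf{A}})$ by $H^0(\mathsf{A})$, since this is what converts the soft statement ``$\essim$ is a triangulated subcategory containing $H^0(\mathsf{A})$'' into the asserted inclusions, and it is the only place where the precise definitions of $\mathsf{A}^{\pretr}$ and $\perf(\mathsf{A})$ enter. Secondarily, one should keep track of which hypotheses are actually used: \emph{fullness} of $F$ is exactly what closes the essential image under cones, whereas in the second part \emph{faithfulness} is additionally needed to produce, from an idempotent of $\mathcal{U}$, an idempotent of $\mathcal{T}$ that can then be split.
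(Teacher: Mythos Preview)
Your proposal is correct and follows essentially the same route as the paper: show that $\essim(F)$ is a strictly full triangulated subcategory of $H^0(\dgmod{\mathsf{A}})$ containing $H^0(\mathsf{A})$ (using fullness for closure under cones), then invoke minimality of $\tr(\mathsf{A})$; for the second part pass to the idempotent closure. The paper compresses the second part into a single sentence, whereas you spell out the idempotent-lifting argument explicitly --- which is fine, and in fact clarifies why faithfulness (not just fullness) is needed there.
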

            \begin{proof}
                Let $X \in \tr(\mathsf{A})$. We aim to show that $X \in \essim (F)$. If $X \in H^0(\mathsf{A})$, this is true by hypothesis. Therefore, it suffices to show that $\essim(F)$ is closed under shifts and cones (cf. Definition \ref{def:pretrclosure}). 
                A trivial reasoning shows that $X\in \essim(F)$ if it is the shift of an object in $\essim(F)$.
                It remains to study the case $X= \cone (f)$ for a morphism $f:Y_1 \to Y_2$ where $Y_i \in \essim(F)$ for $i=1,2$. Notice there exist an object $Z_i \in \mathcal{T}$ and an isomorphism $\varphi_i:FZ_i \to Y_i$. Since $F$ is full, we can find $g:Z_1 \to Z_2$ such that $Fg= \varphi_2^{-1} f \varphi_1$. Then $X \cong \cone(Fg)\cong F(\cone(g)) \in \essim(F)$. 
            
                When $\mathcal{T}$ is idempotent complete and $G:\mathcal{T} \to H^0(\dgmod{\mathsf{A}})$ is a fully faithful triangulated functor, $\tr(\mathsf{A}) \subset \essim(G)$ directly implies that $\dc{\mathsf{A}} \subset \essim(G)$.
            \end{proof}
            \begin{rem}\label{rem:dgtaylor}
                It directly follows from Lemma \ref{lem:dgtaylor} that every fully faithful triangulated functor $F:\tr(\mathsf{A}) \to \tr(\mathsf{A})$ with $H^0(\mathsf{A}) \subset \essim(F)$ is an equivalence.
                Analogously, any fully faithful triangulated functor $G: \dc{\mathsf{A}} \to \dc{\mathsf{A}}$ with $H^0(\mathsf{A}) \subset \essim(G)$ is an equivalence.
            In particular, in this situation $G_{\mid \tr(\mathsf{A})}:\tr(\mathsf{A})\to G(\tr(\mathsf{A}))$ is an equivalence and $G$ is its idempotent extension up to natural isomorphism by \cite[Theorem 1.5]{balmerschlichting}. 
            
            If $H^0(\mathsf{A})$ is a full and essentially wide\footnote{i.e. it contains at least one object for each isomorphism class.} subcategory of $\essim(G_{\mid H^0(\mathsf{A})})$, the inclusion $\tr(\mathsf{A}) \subset \essim(G_{\mid \tr(\mathsf{A})})$ obtained by Lemma \ref{lem:dgtaylor} is in fact an equivalence. This follows by considering the fully faithful triangulated functor
            \[
            \begin{tikzcd}
                L: \tr(\mathsf{A}) \ar[r,hook, "\incl"] & \essim(G_{\mid \tr (\mathsf{A})}) \ar[r,"(G_{\mid \tr(\mathsf{A})})^{-1}"] & \tr(\mathsf{A}),
            \end{tikzcd}    
            \]
            which is an equivalence since $H^0(\mathsf{A}) \subset \essim(L)$ by assumption. Therefore, since the second functor is already an equivalence, $\incl$ is an equivalence as well. In particular, we obtain an induced functor $G' : \tr(\mathsf{A}) \to \tr(\mathsf{A})$ whose idempotent extension is $G$, since $G'$ is defined as $L^{-1}=(\incl)^{-1} G_{\mid \tr (\mathsf{A})}$.
            \end{rem}

            \begin{prop} \label{prop:dcatra}
                Let $\mathsf{A}$ be a DG-category.
                If $\dc{\mathsf{A}}$ has a strongly unique enhancement, then $\tr(\mathsf{A})$ has a strongly unique enhancement.
            \end{prop}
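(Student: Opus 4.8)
The plan is to climb up to the idempotent completion $\dc{\mathsf{A}}$, apply the hypothesis there, and then restrict the resulting quasi-functor back down to $\tr(\mathsf{A})$. Let $(\mathsf{C},E)$ and $(\mathsf{C}',E')$ be two enhancements of $\tr(\mathsf{A})$; I must produce a quasi-equivalence quasi-functor $\mathsf{f}\colon\mathsf{C}\to\mathsf{C}'$ with $E\cong E'H^0(\mathsf{f})$. First I would observe that $(\perf(\mathsf{C}),\bar{E})$ is an enhancement of $\dc{\mathsf{A}}$ for a suitable $\bar{E}$: the DG-category $\perf(\mathsf{C})$ is strongly pretriangulated, hence pretriangulated, and by Remark \ref{rem:dc} its homotopy category $H^0(\perf(\mathsf{C}))=\dc{\mathsf{C}}$ is the idempotent completion of $\tr(\mathsf{C})=H^0(\mathsf{C}^{\pretr})$; moreover, since $\mathsf{C}$ is pretriangulated, $\mathsf{y}\colon\mathsf{C}\to\mathsf{C}^{\pretr}$ is a quasi-equivalence (it is fully faithful, being the corestriction of the Yoneda embedding of Lemma \ref{lem:yoneda}, and quasi-essentially surjective by the pretriangulatedness hypothesis). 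Thus $E\circ H^0(\mathsf{y})^{-1}\colon\tr(\mathsf{C})\to\tr(\mathsf{A})$ is a triangulated equivalence, and its (functorial, equivalence-preserving) extension to idempotent completions is the desired triangulated equivalence $\bar{E}\colon H^0(\perf(\mathsf{C}))\to\dc{\mathsf{A}}$. Building $(\perf(\mathsf{C}'),\bar{E}')$ likewise and invoking the hypothesis that $\dc{\mathsf{A}}$ has a strongly unique enhancement, we get a quasi-equivalence quasi-functor $\mathsf{g}\colon\perf(\mathsf{C})\to\perf(\mathsf{C}')$ with $\bar{E}\cong\bar{E}'H^0(\mathsf{g})$.

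The heart of the argument is to cut $\mathsf{g}$ down to the ``$\tr(\mathsf{A})$-parts''. Write $\mathsf{g}$ as a zig-zag of quasi-equivalences with vertices $\mathsf{D}_0=\perf(\mathsf{C}),\mathsf{D}_1,\dots,\mathsf{D}_n=\perf(\mathsf{C}')$. Set $\Phi_0:=\bar{E}$ and, propagating along each arrow (which is a quasi-equivalence, hence an equivalence on $H^0$), define inductively equivalences $\Phi_i\colon H^0(\mathsf{D}_i)\to\dc{\mathsf{A}}$, each compatible with the corresponding arrow; since the composite of the $H^0$-maps is exactly $H^0(\mathsf{g})$ and $\bar{E}\cong\bar{E}'H^0(\mathsf{g})$, one gets $\Phi_n\cong\bar{E}'$. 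Let $\mathsf{D}_i^0\subset\mathsf{D}_i$ be the full DG-subcategory on the objects $X$ with $\Phi_i(X)$ isomorphic to an object of $\tr(\mathsf{A})$; by Lemma \ref{lem:restrictdg} it is closed under homotopy equivalence, and by construction $\mathsf{D}_0^0=\perf(\mathsf{C})_{\mid\tr(\mathsf{A})}^{\bar{E}}$ and $\mathsf{D}_n^0=\perf(\mathsf{C}')_{\mid\tr(\mathsf{A})}^{\bar{E}'}$ (the latter using $\Phi_n\cong\bar{E}'$ and that the subcategory only depends on the functor up to isomorphism). The compatibility of the $\Phi_i$ together with the fact that $\tr(\mathsf{A})$ is closed under isomorphism forces each arrow to restrict to a DG-functor between consecutive $\mathsf{D}_i^0$, and such a restriction is still a quasi-equivalence: full faithfulness on $H^*$ is automatic for full DG-subcategories, while essential surjectivity on $H^0$ holds because any object of the target $\mathsf{D}^0$ is, in $H^0$, isomorphic to the image of some object of the source, which is then forced into the source $\mathsf{D}^0$ by $\Phi$-compatibility. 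We thus obtain a quasi-equivalence quasi-functor $\perf(\mathsf{C})_{\mid\tr(\mathsf{A})}^{\bar{E}}\to\perf(\mathsf{C}')_{\mid\tr(\mathsf{A})}^{\bar{E}'}$.

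Finally I would identify the two restricted categories with $\mathsf{C}^{\pretr}$ and $(\mathsf{C}')^{\pretr}$. The inclusion $\mathsf{C}^{\pretr}\hookrightarrow\perf(\mathsf{C})_{\mid\tr(\mathsf{A})}^{\bar{E}}$ is fully faithful, hence quasi-fully faithful; it is quasi-essentially surjective because $\bar{E}$ restricted to $H^0(\mathsf{C}^{\pretr})=\tr(\mathsf{C})$ is the equivalence onto $\tr(\mathsf{A})$, so every object of $\mathsf{C}^{\pretr}$ lies in the subcategory and, conversely, every object of $\perf(\mathsf{C})_{\mid\tr(\mathsf{A})}^{\bar{E}}$ is homotopy equivalent to an object of $\mathsf{C}^{\pretr}$; hence this inclusion is a quasi-equivalence, and similarly for $\mathsf{C}'$. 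Precomposing with $\mathsf{y}\colon\mathsf{C}\to\mathsf{C}^{\pretr}$ and postcomposing with the inverse of $\mathsf{y}'\colon\mathsf{C}'\to(\mathsf{C}')^{\pretr}$ yields a quasi-equivalence quasi-functor $\mathsf{f}\colon\mathsf{C}\to\mathsf{C}'$. Chasing the isomorphism $\bar{E}\cong\bar{E}'H^0(\mathsf{g})$ through the restrictions and through $\bar{E}|_{H^0(\mathsf{C}^{\pretr})}\cong E\circ H^0(\mathsf{y})^{-1}$ and $\bar{E}'|_{H^0((\mathsf{C}')^{\pretr})}\cong E'\circ H^0(\mathsf{y}')^{-1}$ then gives $E\cong E'H^0(\mathsf{f})$, so $\tr(\mathsf{A})$ has a strongly unique enhancement.

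I expect the main obstacle to be the restriction step: since a quasi-functor is only a zig-zag, cutting it down to the $\tr(\mathsf{A})$-parts requires choosing compatible identifications $\Phi_i$ of each $H^0(\mathsf{D}_i)$ with $\dc{\mathsf{A}}$ along the whole zig-zag and verifying that the restricted arrows remain quasi-equivalences. Once one knows that $H^0(\perf(\mathsf{C}))$ is the idempotent completion of $\tr(\mathsf{C})$ and that idempotent completion carries triangulated equivalences to triangulated equivalences, the passage to and from $\dc{\mathsf{A}}$ and the compatibility with $E$ and $E'$ are essentially formal.
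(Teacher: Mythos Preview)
Your argument is correct. The technical core---passing from enhancements of $\tr(\mathsf{A})$ to enhancements of $\dc{\mathsf{A}}$ via $\perf(-)$, applying the hypothesis, and restricting the resulting quasi-functor back to the $\tr(\mathsf{A})$-part---is exactly what the paper does, and your careful handling of the restriction along the zig-zag (propagating the $\Phi_i$ and checking that each leg restricts to a quasi-equivalence) makes explicit what the paper leaves implicit.

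The organizational difference is that the paper first invokes Proposition~\ref{prop:strongenh} to reduce strong uniqueness to two separate claims: (a) $\tr(\mathsf{A})$ has a unique enhancement, and (b) every triangulated autoequivalence of $\tr(\mathsf{A})$ admits a lift for one fixed enhancement. It then proves (b) by extending an autoequivalence $F$ to $\dc{\mathsf{A}}$, lifting there, and restricting; and proves (a) by essentially the same $\perf$-and-restrict manoeuvre you carry out. Your approach bypasses Proposition~\ref{prop:strongenh} and verifies the definition of strongly unique enhancement directly, which is slightly more economical but requires tracking the natural isomorphism $\bar{E}\cong\bar{E}'H^0(\mathsf{g})$ through the restriction. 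The paper's decomposition has the advantage that part (b) isolates the statement ``autoequivalences lift'', which is reused elsewhere, while your route avoids an auxiliary reduction and keeps the proof self-contained.
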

            \begin{proof}    
                Take a triangulated equivalence $F:\tr(\mathsf{A})\to \tr(\mathsf{A})$. Now consider its idempotent extension $F':\dc {\mathsf{A}} \to \dc {\mathsf{A}}$, which is unique (up to natural isomorphism) by \cite[Theorem 1.5]{balmerschlichting}. Take $(\mathsf{C},E)$ an enhancement of $\dc{\mathsf{A}}$. Then we have a quasi-functor $\mathsf{f}':\mathsf{C}\to \mathsf{C}$ which is a $(\mathsf{C},E)$-lift of $F'$. We restrict $\mathsf{f}'$ to $\mathsf{C}_{\mid \tr(\mathsf{A})}$ (see Notation \ref{nota:restrictdg}). By the fact that $F'_{\mid \tr(\mathsf{A})}$ is $F$, the restiction of $\mathsf{f}'$ gives a quasi-functor $\mathsf{f}: \mathsf{C}_{\mid \tr(\mathsf{A})}\to \mathsf{C}_{\mid \tr(\mathsf{A})}$ (cf. Lemma \ref{lem:restrictdg}). We conclude that $\mathsf{f}$ is a $(\mathsf{C}_{\mid \tr(\mathsf{A})}, E_{\mid \tr(\mathsf{A})})$-lift of $F$.
            
                It remains to show that $\tr(\mathsf{A})$ has a unique enhancement, so that we can apply Proposition \ref{prop:strongenh} to conclude. Let $(\mathsf{D},E)$ and $(\mathsf{D}',E')$ be two enhancements of $\tr(\mathsf{A})$. Then $\perf(\mathsf{D})$ and $\perf(\mathsf{D}')$ are enhancements of $\dc{\mathsf{A}}$. Indeed, $\tr(\mathsf{D}) \cong H^0(\mathsf{D})\cong \tr(\mathsf{A})$, which implies $\dc{\mathsf{D}} \cong \dc{\mathsf{A}}$ by \cite[Theorem 1.5]{balmerschlichting}.
                By hypothesis, we have a quasi-equivalence quasi-functor  $\mathsf{g}: \perf(\mathsf{D})\to \perf (\mathsf{D}')$ lifting the identity of $\dc{\mathsf{A}}$. We now consider the restrictions $\perf(\mathsf{D})_{\mid \tr(\mathsf{A})}$ and $\perf(\mathsf{D}')_{\mid \tr(\mathsf{A})}$. Since $\mathsf{g}$ is a lift of the identity, it induces a quasi-equivalence quasi-functor $\mathsf{g}': \perf(\mathsf{D})_{\mid \tr(\mathsf{A})} \to \perf(\mathsf{D}')_{\mid \tr(\mathsf{A})}$. We conclude by the following diagram:
                \[
                \begin{tikzcd}
                    \mathsf{D} \ar[r,hook,"\cong"] & \perf(\mathsf{D})_{\mid \tr(\mathsf{A})}\ar[r,"\mathsf{g}'"] &\perf(\mathsf{D}')_{\mid \tr(\mathsf{A})} & \mathsf{D}' . \ar[l, hook', "\cong" above]
                \end{tikzcd}    
                \]
            \end{proof}
            \begin{rem}\label{rem:dcatrasemi}
                The previous result holds also if we replace strongly unique enhancement with semi-strongly unique enhancement. The only difference in the proof is that we need to consider semilifts.
            \end{rem}
            \begin{prop}\label{prop:uniquetrdc}
Let $\mathsf{A}$ be a DG-category. 
\begin{enumerate}
    \item If $\tr(\mathsf{A})$ has a unique enhancement, then $\dc{\mathsf{A}}$ has a unique enhancement.
    \item If $\dc{\mathsf{A}}$ has a unique enhancement, then $\derdg(\mathsf{A})$ has a unique enhancement.
\end{enumerate}
            \end{prop}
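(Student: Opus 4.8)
The plan is to derive uniqueness of the enhancement of the larger triangulated category from uniqueness of the enhancement of the smaller one, by \emph{restricting} enhancements along the subcategories $\tr(\mathsf{A})\subset\dc{\mathsf{A}}$ (for (1)) and $\dc{\mathsf{A}}\subset\derdg(\mathsf{A})$ (for (2)), and then \emph{inducing back up} with $\perf(-)$, respectively $\semifree(-)$. Two tools will be used throughout. First, the restriction $(\mathsf{C},E)\mapsto(\mathsf{C}_{\mid\mathcal{S}},E_{\mid\mathcal{S}})$ of Notation \ref{nota:restrictdg} sends an enhancement of a triangulated category $\mathcal{T}$ to an enhancement of a full triangulated (or thick) subcategory $\mathcal{S}\subset\mathcal{T}$: the only point to check is that $\mathsf{C}_{\mid\mathcal{S}}$ is again pretriangulated, which holds because it is a full DG-subcategory of the pretriangulated $\mathsf{C}$ whose homotopy category, being equivalent to $\mathcal{S}$, is a triangulated subcategory of $H^0(\mathsf{C})$ — essential surjectivity of $H^0$ of the Yoneda embedding of $\mathsf{C}_{\mid\mathcal{S}}$ follows from the full faithfulness of $(\mathsf{C}_{\mid\mathcal{S}})^{\pretr}\hookrightarrow\mathsf{C}^{\pretr}$ (Remark \ref{rem:h0qespretr}) together with $\mathsf{C}\simeq\mathsf{C}^{\pretr}$. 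Second, $\perf(-)$ and $\semifree(-)$ take quasi-equivalences to quasi-equivalences, hence zig-zags of quasi-equivalences to zig-zags of quasi-equivalences, by the same argument as in Remark \ref{rem:h0qespretr} applied to the restrictions of $\Ind(-)$ (see \cite[Proposition 2.5]{drinfeld}).

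For (1), I would take two enhancements $(\mathsf{C},E)$ and $(\mathsf{C}',E')$ of $\dc{\mathsf{A}}$ and regard $\tr(\mathsf{A})$ as a full triangulated subcategory of $\dc{\mathsf{A}}$ via $\mathsf{A}^{\pretr}\subset\perf(\mathsf{A})$. Then $\mathsf{D}:=\mathsf{C}_{\mid\tr(\mathsf{A})}$ and $\mathsf{D}':=\mathsf{C}'_{\mid\tr(\mathsf{A})}$ are enhancements of $\tr(\mathsf{A})$, so by hypothesis there is a quasi-equivalence quasi-functor $\mathsf{h}\colon\mathsf{D}\to\mathsf{D}'$, hence one $\perf(\mathsf{h})\colon\perf(\mathsf{D})\to\perf(\mathsf{D}')$. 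It then remains to identify $\perf(\mathsf{D})$ with $\mathsf{C}$ up to quasi-equivalence: since $H^0(\mathsf{C})\cong\dc{\mathsf{A}}$ is idempotent complete, $\mathsf{y}\colon\mathsf{C}\to\mathsf{C}^{\pretr}\hookrightarrow\perf(\mathsf{C})$ is a quasi-equivalence; and since $\dc{\mathsf{D}}$ is the idempotent completion of $\tr(\mathsf{D})=H^0(\mathsf{D})\cong\tr(\mathsf{A})$, the inclusion $\mathsf{D}\hookrightarrow\mathsf{C}$ induces an equivalence $\dc{\mathsf{D}}\to\dc{\mathsf{C}}\cong\dc{\mathsf{A}}$, hence a quasi-equivalence $\perf(\mathsf{D})\to\perf(\mathsf{C})$ between strongly pretriangulated DG-categories (Remark \ref{rem:h0qespretr}). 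Concatenating, $\mathsf{C}\simeq\perf(\mathsf{C})\simeq\perf(\mathsf{D})\simeq\perf(\mathsf{D}')\simeq\perf(\mathsf{C}')\simeq\mathsf{C}'$; this is essentially the mirror image of the argument in the proof of Proposition \ref{prop:dcatra}.

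For (2), I would take two enhancements $(\mathsf{C},E)$ and $(\mathsf{C}',E')$ of $\derdg(\mathsf{A})$. By Remark \ref{rem:dc} the compact objects of $\derdg(\mathsf{A})$ form $\dc{\mathsf{A}}$, and the representables $\mathsf{y}(X)$ with $X\in\mathsf{A}$ are a set of compact generators. As a triangulated equivalence preserves coproducts and hence compact objects, restricting to the compacts yields enhancements $\mathsf{C}^{c}$ and $(\mathsf{C}')^{c}$ of $\dc{\mathsf{A}}$ (here $\mathsf{C}^{c}$ is the full DG-subcategory of $\mathsf{C}$ on the objects $X$ with $E(X)$ compact, as in Notation \ref{nota:restrictdg}); by hypothesis $\mathsf{C}^{c}\simeq(\mathsf{C}')^{c}$, so $\semifree(\mathsf{C}^{c})\simeq\semifree((\mathsf{C}')^{c})$. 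The proof then reduces to the claim $\mathsf{C}\simeq\semifree(\mathsf{C}^{c})$ (and likewise for $\mathsf{C}'$): that a DG-enhancement of a compactly generated triangulated category is quasi-equivalent to the DG derived category $\semifree(-)$ of a DG-enhancement of its subcategory of compact objects. I would prove this by first replacing $\mathsf{C}$ by a quasi-equivalent DG-category closed under $\mathbb{U}$-small coproducts, then extending the inclusion $\mathsf{C}^{c}\hookrightarrow\mathsf{C}$ along the Yoneda embedding $\mathsf{C}^{c}\hookrightarrow\semifree(\mathsf{C}^{c})$ to a coproduct-preserving DG-functor $\Phi\colon\semifree(\mathsf{C}^{c})\to\mathsf{C}$ that is the identity on $\mathsf{C}^{c}$, and finally checking that $\Phi$ is a quasi-equivalence: quasi-full faithfulness by d\'evissage (it is a quasi-isomorphism on $\hom$-complexes between representables, both sides are compactly generated by $\mathsf{C}^{c}$, and $\Phi$ commutes with cones and with coproducts), and quasi-essential surjectivity because, $H^{0}(\Phi)$ being fully faithful, its essential image is a localizing subcategory of $H^0(\mathsf{C})$ containing $\mathsf{C}^{c}$, hence all of $\derdg(\mathsf{A})$.

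The routine parts are the manipulation of the restriction construction and the behaviour of $\perf$ and $\semifree$ on quasi-equivalences. I expect the one real obstacle to be the last step of (2), that is, making the quasi-equivalence $\mathsf{C}\simeq\semifree(\mathsf{C}^{c})$ precise: besides the d\'evissage over the localizing subcategory generated by the compact objects, one needs enough set-theoretic care to replace an abstract pretriangulated enhancement of $\derdg(\mathsf{A})$ by a quasi-equivalent DG-category that genuinely admits all $\mathbb{U}$-small coproducts, so that $\semifree(\mathsf{C}^{c})$ can be used through its universal property as a free cocompletion (Convention \ref{conv:field}).
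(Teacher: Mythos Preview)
Your proposal is correct and follows essentially the same strategy as the paper: restrict the two enhancements to the smaller subcategory, use the hypothesis to connect the restrictions by a zig-zag of quasi-equivalences, and then induce back up via $\perf(-)$ (for (1)) or $\semifree(-)$ (for (2)). Part (1) matches the paper's argument almost exactly; the paper invokes Lemma~\ref{lem:dgtaylor} where you reason directly with idempotent completions, but the content is the same.

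The only substantive difference is in part (2), precisely at the step you flag as the ``one real obstacle'': the quasi-equivalence $\mathsf{C}\simeq\semifree(\mathsf{C}^{c})$. Rather than replacing $\mathsf{C}$ by a cocomplete model and running a d\'evissage argument by hand, the paper simply cites \cite[Proposition~1.17]{luntsorlov}, which provides a ready-made quasi-equivalence $\phi\colon\mathsf{C}\to\semifree(\mathsf{C}_{\mid\dc{\mathsf{A}}})$ (built via the Yoneda embedding and restriction along $\mathsf{C}_{\mid\dc{\mathsf{A}}}\hookrightarrow\mathsf{C}$). Your sketch would work and is essentially what that reference does, but invoking it directly makes the set-theoretic and cocompleteness issues you worry about disappear, since the construction there does not require first replacing $\mathsf{C}$ by a DG-category with genuine coproducts.
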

\begin{proof}
    \item 
    \begin{enumerate}
        \item Let $\mathsf{C}$ and $\mathsf{C}'$ be two enhancements of $\dc{\mathsf{A}}$. Then $\mathsf{C}_{\mid \tr(\mathsf{A})}$ and $\mathsf{C}'_{\mid \tr (\mathsf{A})}$ are enhancements of $\tr(\mathsf{A})$. In particular, there exists a quasi-equivalence quasi-functor $\mathsf{f}: \mathsf{C}_{\mid \tr(\mathsf{A})} \to \mathsf{C}'_{\mid \tr (\mathsf{A})}$. Consider $\mathsf{g}$ as the composition
        \[
        \begin{tikzcd}
            \mathsf{C} \ar[r,"\mathsf{y}", hook] & \perf (\mathsf{C}) & \perf (\mathsf{C}_{\mid \tr{\mathsf{A}}}) \ar[l,hook', "\perf (\incl)"] \ar[r,"\perf(\mathsf{f})"]  & \perf (\mathsf{C}'_{\mid \tr (\mathsf{A})})\ar[r,hook,"\perf(\incl)"] & \perf (\mathsf{C}') & \mathsf{C}'. \ar[l, hook', "\mathsf{y}"]
        \end{tikzcd}
        \]
        Lemma \ref{lem:dgtaylor} shows that $\mathsf{y}$ and $\perf(\incl)$ are quasi-equivalences for both $\mathsf{C}$ and $\mathsf{C}'$. Since $\perf(\mathsf{f})$ is a quasi-equivalence as well, $\mathsf{g}$ becomes a quasi-equivalence. 

    \item The proof is very similar to item 1: consider $\mathsf{C}$ and $\mathsf{C}'$ two enhancements of $\derdg(\mathsf{A})$. By hypothesis, there exists a quasi-equivalence quasi-functor $\mathsf{f}: \mathsf{C}_{\mid \dc{\mathsf{A}}}\to \mathsf{C}'_{\mid \dc {\mathsf{A}}}$. Let $\mathsf{h}$ be the composition
    \[
        \begin{tikzcd}
            \mathsf{C} \ar[r,"\phi"] & \semifree (\mathsf{C}_{\mid \dc{\mathsf{A}}} )\ar[r,"\semifree(\mathsf{f})"] & \semifree(\mathsf{C}'_{\mid \dc{\mathsf{A}}}) & \mathsf{C}' \ar[l, "\phi'" above],
        \end{tikzcd}
    \] 
    where $\phi,\phi'$ are defined as in \cite[Section 1]{luntsorlov}. By \cite[Proposition 1.17]{luntsorlov}, they both are quasi-equivalences. Since $\mathsf{f}$ is a quasi-equivalence, the same holds for $\semifree(\mathsf{f})$ (see \cite[Theorem 10.12.5.1]{bernlunts}, or \cite[Example 7.2]{k1}). Finally, $\mathsf{h}$ is a quasi-equivalence.
    \end{enumerate}
\end{proof}
            \section{Triangulated formality}\label{sec:sf}
            
            In this section we define the original notion of triangulated formal DG-categories and discuss their relation with uniqueness of enhancements (see Figure \ref{fig:sf} for an overview). This concept is inspired by the property of intrinsically formal graded rings proved in Proposition \ref{prop:unique} below. 

            \begin{nota}
                Any DG-ring $A$, i.e. a DG-algebra over a commutative ring $\ring$, can be thought of as a DG-category with one object. By a slight abuse of notation, $A$ will indicate both the DG-ring and the DG-category. Its unique object will be denoted with $O_A$.
            \end{nota}

            \begin{defn}\label{def:intr}
                We recall that a DG-morphism $f:A \to A'$ (i.e. a DG-functor between DG-rings) is a \emph{quasi-isomorphism} if $H^*(f)$ is an isomorphism.

                Let $B$ be a graded ring. We say that $B$ is \emph{intrinsically formal} if, for every DG-ring $A$ such that $H^*(A) \cong B$, we are able to find a zig-zag of quasi-isomorphisms
                \[
                \begin{tikzcd}
                    & A_1 \arrow{ld}[sloped,above]{\simeq}\ar[rd,"\simeq" sloped] && \dots \arrow{ld}[sloped,above]{\simeq}\ar[rd,"\simeq" sloped]& \\
                    A&&A_2&&B.
                \end{tikzcd}
                \]
            \end{defn}
            This concept has important connections with geometry, and comes from the study of formal manifolds~\cite{delignerealhomotopy}. 
    An explicit discussion of intrinsic formality in geometry, together with a number of examples, is the focus of \cite{lupton} (see also the PhD thesis~\cite{luptonphd}). 
    More recently, this notion has been considered in~\cite{saito23}, since it plays an important role in tilting theory for periodic triangulated categories. 
    This role will reappear in our studies in Section \ref{sec:freegen}, where we will discuss examples of triangulated categories coming from intrinsically formal graded rings (see Corollary \ref{cor:stronguniquefree} and Example \ref{es:Kmod}).
            \begin{prop}\label{prop:unique}
                Let $B$ be an intrinsically formal graded ring. For any enhancement $(\mathsf{C},E)$ of $\tr(B)$, there exists a quasi-equivalence quasi-functor $\mathsf{f}: B^{\pretr} \to \mathsf{C}$ such that $EH^0(\mathsf{f})(O_B)\cong O_B$.
            \end{prop}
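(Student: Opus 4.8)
The plan is to realize $B$ inside $\mathsf{C}$ as the DG-endomorphism ring of a single object, and then transport the intrinsic formality of $B$ along pretriangulated closures.

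\emph{Step 1: the endomorphism DG-ring.} Since $E$ is essentially surjective, choose $X \in \mathsf{C}$ with $E(X) \cong O_B$, put $A := \hom_{\mathsf{C}}(X,X)$ (a DG-ring with distinguished object $O_A$), and note the tautological fully faithful DG-functor $\iota\colon A \hookrightarrow \mathsf{C}$, $O_A \mapsto X$. I claim $H^{*}(A) \cong B$ as graded rings. For each $n$ one has $H^{n}(A) = \hom_{H^{0}(\mathsf{C})}(X, X[n])$; since $E$ is triangulated it commutes with shifts up to natural isomorphism, hence induces a graded ring isomorphism $\bigoplus_n \hom_{H^{0}(\mathsf{C})}(X, X[n]) \cong \bigoplus_n \hom_{\tr(B)}(O_B, O_B[n])$. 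On the other hand $\tr(B) = H^{0}(B^{\pretr})$, and by the Yoneda DG-embedding together with the fact that $B$, being a graded ring, is a complex with zero differential, one gets $\hom_{B^{\pretr}}(\mathsf{y}O_B, \Sigma^{n}\mathsf{y}O_B) \cong \Sigma^{n}B$, so that $\hom_{\tr(B)}(O_B, O_B[n]) \cong B^{n}$ compatibly with composition. Hence $H^{*}(A) \cong B$.

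\emph{Step 2: transporting formality.} By intrinsic formality of $B$ there is a zig-zag of quasi-isomorphisms of DG-rings joining $A$ and $B$. Regarding DG-rings as one-object DG-categories, a quasi-isomorphism of DG-rings is in particular a quasi-equivalence (quasi-fully faithful by definition, trivially quasi-essentially surjective), so by Remark \ref{rem:h0qespretr} applying $(-)^{\pretr}$ yields a zig-zag of quasi-equivalences, i.e.\ a quasi-equivalence quasi-functor $\mathsf{g}\colon B^{\pretr} \to A^{\pretr}$. Moreover each DG-functor in the zig-zag fixes the distinguished object, so $\mathsf{g}$ sends $\mathsf{y}(O_B)$ to an object isomorphic to $\mathsf{y}(O_A)$ in $H^{0}$.

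\emph{Step 3: assembling $\mathsf{f}$.} Since $\iota$ is fully faithful, $\iota^{\pretr}\colon A^{\pretr} \to \mathsf{C}^{\pretr}$ is fully faithful (Remark \ref{rem:h0qespretr}), hence quasi-fully faithful; it is also quasi-essentially surjective, because the essential image of $H^{0}(\iota^{\pretr})$ is closed under shifts and cones (cf.\ Definition \ref{def:pretrclosure}) and contains $X$, which generates $\tr(\mathsf{C}) \cong H^{0}(\mathsf{C}) \cong \tr(B)$ as a triangulated category (as $E$ identifies $X$ with the generator $O_B$). Thus $\iota^{\pretr}$ is a quasi-equivalence. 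Finally, $\mathsf{C}$ being pretriangulated, $\mathsf{y}\colon \mathsf{C} \to \mathsf{C}^{\pretr}$ is a quasi-equivalence (fully faithful by Lemma \ref{lem:yoneda}, with $H^{0}(\mathsf{y})$ an equivalence). The composite quasi-functor
\[
\mathsf{f}\colon B^{\pretr} \xrightarrow{\mathsf{g}} A^{\pretr} \xrightarrow{\iota^{\pretr}} \mathsf{C}^{\pretr} \xrightarrow{\mathsf{y}^{-1}} \mathsf{C}
\]
is then a quasi-equivalence quasi-functor, and tracking the distinguished object gives $H^{0}(\mathsf{f})(O_B) \cong X$ in $H^{0}(\mathsf{C})$, whence $E H^{0}(\mathsf{f})(O_B) \cong E(X) \cong O_B$.

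I expect the only genuinely delicate point to be Step 1 — verifying that the \emph{ring} structure on $H^{*}(A)$, and not merely the graded module structure, is identified with $B$; this requires care with the shift functor and the signs it introduces, although these are routine. Steps 2 and 3 are bookkeeping on top of the results of Section \ref{sec:predg}.
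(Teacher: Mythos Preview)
Your proof is correct and follows essentially the same strategy as the paper: pick an object $X$ with $E(X)\cong O_B$, identify $H^*(\hom_{\mathsf{C}}(X,X))\cong B$ as graded rings, invoke intrinsic formality to get a zig-zag of quasi-isomorphisms, pass to pretriangulated closures, and conclude quasi-essential surjectivity from the fact that $O_B$ generates $\tr(B)$. The only organizational difference is that the paper replaces $\mathsf{C}$ by a strongly pretriangulated model at the outset and lands directly in $\mathsf{C}$ via Proposition~\ref{prop:dgextension}, whereas you factor through $A^{\pretr}\to\mathsf{C}^{\pretr}\xleftarrow{\mathsf{y}}\mathsf{C}$; this is cosmetic.
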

            \begin{proof}
                Without loss of generality, we assume that $\mathsf{C}$ is strongly pretriangulated.
                Consider $C \in \mathsf{C}$ such that $E(C) \cong O_B$. Since 
            \begin{equation*}
            \begin{split}
            H^n(\hom_{\mathsf{C}}(C,C)) &\cong H^0 (\hom_{\mathsf{C}}(C,C)[n])\cong
            H^0(\hom_{\mathsf{C}}(C,C[n]))\\
            &\cong \hom_{H^0(\mathsf{C})}(C,C[n])\cong\hom_{\tr(B)}(O_B,O_B [n]) \cong H^n(B)=B^n,
            \end{split}
            \end{equation*} $\hom_{\mathsf{C}}(C,C)$ has cohomology $B$ (the product of $B$ is recovered from the composition on $\tr(B)$). By hypothesis, we obtain a zig-zag of quasi-isomorphisms from $B$ to $\hom_{\mathsf{C}}(C,C)$ extending to a quasi-fully faithful quasi-functor $\mathsf{f}:B^{\pretr} \to \mathsf{C}$ by Remark \ref{rem:h0qespretr} and Proposition \ref{prop:dgextension}. Since $E H^0(\mathsf{f})(O_B)\cong O_B$, Remark \ref{rem:dgtaylor} shows that $H^0(\mathsf{f})$ is an equivalence, so $\mathsf{f}$ is a quasi-equivalence.
            \end{proof}

            The previous proposition motivates the following. 
            \begin{defn}
                A DG-category $\mathsf{A}$ is \emph{triangulated formal} if
                \begin{description}
                    \item[TF] For any enhancement $(\mathsf{C},E)$ of $\tr(\mathsf{A})$, we have a quasi-equivalence quasi-functor $\mathsf{f}: \mathsf{A}^{\pretr} \to \mathsf{C}$ such that \begin{equation}\label{eq:sf2}
                        E H^0(\mathsf{f}) (X) \cong X\qquad \text{ for all }X \in H^0(\mathsf{A}).\end{equation}
                \end{description}
                A DG-category $\mathsf{A}$ is \emph{unbounded triangulated formal} if
                \begin{description}
                    \item[uTF]  Given any enhancement $(\mathsf{C},E)$ of $\derdg(\mathsf{A})$, there exists a quasi-equivalence quasi-functor $\mathsf{f}: \semifree(\mathsf{A}) \to \mathsf{C}$ such that \eqref{eq:sf2} holds.
                \end{description}
            \end{defn}
            This notion is thus a generalization of the intrinsic formality to the case of DG-categories;  here, ``triangulated'' reminds the reader of the strong connection between this notion and some enhancement properties of the associated triangulated categories, as will be explored extensively in this section (see Figure \ref{fig:sf}).

            \begin{rem}\label{rem:sfunique}Let $\mathsf{A}$ a DG-category.
                \begin{enumerate}
                \item Triangulated formality is stable under quasi-equivalence.
                \item If $\tr(\mathsf{A})\cong \tr(\mathsf{A}')$ via the inclusion $\mathsf{A}\subset \mathsf{A}'$, then $\mathsf{A}$ is (unbounded) triangulated formal whenever $\mathsf{A}'$ is (unbounded) triangulated formal.
                    \item   If $\mathsf{A}$ is triangulated formal, then $\tr(\mathsf{A})$ has a unique enhancement. Analogously, if $\mathsf{A}$ is unbounded triangulated formal, then $\derdg(\mathsf{A})$ has a unique enhancement.
                    \item If $\tr(\mathsf{A})$ has a semi-strongly unique enhancement, then $\mathsf{A}$ is triangulated formal by picking a $(\mathsf{A}^{\pretr},\id)- (\mathsf{C},E)$-semilift of the identity $\id:\tr(\mathsf{A})\to \tr(\mathsf{A})$ for any enhancement $(\mathsf{C},E)$ (we use Proposition \ref{prop:semistrongenh}). Analogously, if $\derdg(\mathsf{A})$ has a semi-strongly unique enhancement, then $\mathsf{A}$ is unbounded triangulated formal.
                    \item As a matter of fact, $\tr(\mathsf{A})$ has a semi-strongly unique enhancement if and only if $\mathsf{A}^{\pretr}$ is triangulated formal.
                \end{enumerate}
            \end{rem}

            \begin{es}\label{es:intrformalstrong}
                Intrinsically formal graded rings are examples of triangulated formal graded categories with one object by Proposition \ref{prop:unique}. In particular, all rings are triangulated formal (see, for instance, \cite[Lemma 6.6]{duggershipleyK}).
            \end{es}

            We now provide a wide range of meaningful examples of triangulated formal DG-categories.
            \begin{prop}\label{prop:asf}
                Let $\mathcal{A}$ be a ($\ring$-linear) category. Then it is triangulated formal (cf. \cite[Proposition 2.6]{luntsorlov}). 
                \end{prop}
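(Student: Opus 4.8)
The plan is to reduce the statement to the case of a ring (a one-object graded category), for which triangulated formality is already known (Example \ref{es:intrformalstrong}, via intrinsic formality of rings). The key structural fact is that for a $\ring$-linear category $\mathcal{A}$, viewed as a DG-category concentrated in degree $0$ with zero differential, the triangulated category $\tr(\mathcal{A})$ is the bounded homotopy category of complexes over its additive closure (this is spelled out in Example \ref{es:homder}). So the first step is to pass from $\mathcal{A}$ to its additive closure $\mathcal{A}^{\oplus}$: by Remark \ref{rem:sfunique}(2), since the inclusion $\mathcal{A}\subset \mathcal{A}^{\oplus}$ induces an equivalence $\tr(\mathcal{A})\cong\tr(\mathcal{A}^{\oplus})$, it suffices to prove the statement for an additive category.

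Second, I would take an arbitrary enhancement $(\mathsf{C},E)$ of $\tr(\mathcal{A})$ and need to produce a quasi-equivalence quasi-functor $\mathsf{f}\colon \mathcal{A}^{\pretr}\to\mathsf{C}$ with $EH^0(\mathsf{f})(X)\cong X$ for all $X\in\mathcal{A}$. By Proposition \ref{prop:dgextension}, such an $\mathsf{f}$ is determined (up to the fully-faithful part of that equivalence) by a DG-functor $\mathcal{A}\to\mathsf{C}$, i.e. by a choice of objects $C_A\in\mathsf{C}$ with $E(C_A)\cong A$ for each $A\in\mathcal{A}$, together with closed degree-$0$ morphisms realizing $\hom_{\mathcal{A}}(A,A')$ inside $\hom_{\mathsf{C}}^0(C_A,C_{A'})$ compatibly with composition. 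The point is that $\hom_{\mathcal{A}}(A,A')$ is concentrated in degree $0$, and $H^0(\hom_{\mathsf{C}}(C_A,C_{A'}))\cong\hom_{\tr(\mathcal{A})}(A,A')\cong\hom_{\mathcal{A}}(A,A')$; so I need to lift this degree-$0$ cohomology isomorphism to an actual DG-functor. For a single object ($\mathcal{A}$ a ring) this is exactly the content of the ring case, where a semifree/cofibrant replacement argument (or the cited \cite[Lemma 6.6]{duggershipleyK}) gives a zig-zag of quasi-isomorphisms from the ring to $\hom_{\mathsf{C}}(C,C)$.

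The main obstacle — and the reason this needs care beyond the one-object case — is coherently choosing the lifts across all the $\hom$-complexes so that composition is strictly respected, not just up to homotopy. I would handle this by the same device used in Proposition \ref{prop:unique}: replace $\hom_{\mathsf{C}}(-,-)$ by a chain of DG-functors, realizing $\mathcal{A}$ as $H^0$ of an explicit strongly-pretriangulated model. Concretely, inside $\mathsf{C}$ (assumed strongly pretriangulated) pick objects $C_A$ lifting $A$; the full DG-subcategory $\mathsf{C}_{\mid\mathcal{A}}$ on these objects is a DG-category with $H^0(\mathsf{C}_{\mid\mathcal{A}})\cong\mathcal{A}$ and with $\hom$-complexes having cohomology concentrated in degree $0$. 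Such a DG-category is formal: one can use the truncation $\tau_{\le 0}\mathsf{C}_{\mid\mathcal{A}}$ (Definition \ref{def:truncation}), which receives a DG-functor from $\mathsf{C}_{\mid\mathcal{A}}$'s good model and maps to $H^0(\mathsf{C}_{\mid\mathcal{A}})\cong\mathcal{A}$, and check both maps are quasi-equivalences because the positive cohomology vanishes. This yields a zig-zag $\mathcal{A}\simeq \cdots\simeq \mathsf{C}_{\mid\mathcal{A}}\hookrightarrow\mathsf{C}$ of quasi-equivalences between the DG-categories in question (the inclusion being quasi-fully faithful).

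Finally, I would promote this to the pretriangulated level: apply $(-)^{\pretr}$ to the zig-zag (using Remark \ref{rem:extfunpretr} and Remark \ref{rem:h0qespretr}, so quasi-equivalences are preserved) and use Proposition \ref{prop:dgextension} together with the fact that $\mathsf{C}$ is strongly pretriangulated to extend the quasi-fully-faithful DG-functor $\mathsf{C}_{\mid\mathcal{A}}\hookrightarrow\mathsf{C}$ to $\mathsf{f}\colon\mathcal{A}^{\pretr}\to\mathsf{C}$. Since $E(C_A)\cong A$ for generators $A$ and $E H^0(\mathsf{f})$ agrees with the chosen lifts on objects of $\mathcal{A}$, Remark \ref{rem:dgtaylor} (the essential image of a fully faithful triangulated self-functor containing $H^0(\mathcal{A})$ is everything) forces $H^0(\mathsf{f})$ to be essentially surjective, hence an equivalence, so $\mathsf{f}$ is a quasi-equivalence satisfying \eqref{eq:sf2}. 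This establishes condition \textbf{TF} for $\mathcal{A}$.
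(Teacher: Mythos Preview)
Your argument is correct and is essentially the paper's proof: the key step is the truncation zig-zag $\mathcal{A}\cong H^0(\mathsf{C}_{\mid\mathcal{A}})\leftarrow\tau_{\le 0}\mathsf{C}_{\mid\mathcal{A}}\to\mathsf{C}_{\mid\mathcal{A}}\hookrightarrow\mathsf{C}$, with both truncation maps being quasi-equivalences because $H^i(\mathsf{C}_{\mid\mathcal{A}})=0$ for all $i\ne 0$, then promoting to the pretriangulated level. Your opening reduction to the additive closure and the detour through the one-object (ring) case are unnecessary --- the truncation argument applies verbatim to any $\ring$-linear category --- and your phrase ``receives a DG-functor from $\mathsf{C}_{\mid\mathcal{A}}$'s good model'' is garbled (the map goes $\tau_{\le 0}\mathsf{C}_{\mid\mathcal{A}}\to\mathsf{C}_{\mid\mathcal{A}}$, as in Definition~\ref{def:truncation}), but the substance is identical.
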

                \begin{proof}
                The idea is to proceed analogously to the proof of a ring being intrinsically formal (cf. \cite[Lemma 6.6]{duggershipleyK}).
                Let $(\mathsf{C},E)$ be an enhancement of $\tr(\mathcal{A})$. For the sake of simplicity, assume $\mathsf{C}$ to be strongly pretriangulated and consider $\mathsf{C}_{\mid \mathcal{A}}$ (recall Notation \ref{nota:restrictdg}). Take $\tau_{\le 0}(\mathsf{C}_{\mid \mathcal{A}})$ as described in Definition \ref{def:truncation}. We have two natural DG-functors given by truncation: $\tau_{\le 0}(\mathsf{C}_{\mid \mathcal{A}}) \to \mathsf{C}_{\mid \mathcal{A}}$ and, by Remark \ref{rem:lefttruncation}, $\tau_{\le 0}(\mathsf{C}_{\mid \mathcal{A}}) \to H^0(\mathsf{C}_{\mid \mathcal{A}}){\cong} \mathcal{A}$ (the equivalence $H^0(\mathsf{C}_{\mid \mathcal{A}}){\cong} \mathcal{A}$ is given by the restriction of $E$). It is easy to prove that these DG-functors are in fact quasi-equivalences, because $H^i(\mathsf{C}_{\mid \mathcal{A}})=0$ for $i\ne 0$. By Remark \ref{rem:h0qespretr} and Proposition \ref{prop:dgextension}, we can extend the zig-zag $\mathcal{A} \leftarrow \tau_{\le 0} (\mathsf{C}_{\mid \mathcal{A}}) \to \mathsf{C}_{\mid \mathcal{A}}$ to obtain a quasi-equivalence quasi-functor $\mathsf{f}:\mathcal{A}^{\pretr} \to \mathsf{C}$. The fact that $EH^0(\mathsf{f})(X) \cong X$ for all $X \in \mathcal{A}$ follows from the definition of $\mathsf{f}$; indeed, $H^0(\mathsf{f})$ restricted to $\mathcal{A}$ is the inverse of $E$ on objects.
                \end{proof}
                
                \begin{prop}\label{prop:kcomba}
                Let $\mathcal{A}$ be an additive category. Then $\kcomb(\mathcal{A})$ has a semi-strongly unique enhancement.
                \end{prop}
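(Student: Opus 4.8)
The plan is to deduce this from the triangulated formality of $\mathcal{A}$ (Proposition \ref{prop:asf}) together with a bootstrapping lemma that is special to $\kcomb(\mathcal{A})$. Recall $\tr(\mathcal{A})\cong\kcomb(\mathcal{A})$ (Example \ref{es:homder}); I identify the two and write $\iota\colon\mathcal{A}\hookrightarrow\kcomb(\mathcal{A})$ for the full (replete) subcategory of complexes concentrated in degree $0$. The lemma I would isolate is: \emph{any triangulated autoequivalence $\Psi$ of $\kcomb(\mathcal{A})$ equipped with a natural isomorphism $\Psi\circ\iota\cong\iota$ satisfies $\Psi(Y)\cong Y$ for every object $Y$.} Granted this, the proposition follows quickly. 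Let $(\mathsf{C},E)$ and $(\mathsf{C}',E')$ be two enhancements of $\kcomb(\mathcal{A})$. By Proposition \ref{prop:asf} there are quasi-equivalence quasi-functors $\mathsf{f}\colon\mathcal{A}^{\pretr}\to\mathsf{C}$ and $\mathsf{f}'\colon\mathcal{A}^{\pretr}\to\mathsf{C}'$; moreover, inspecting its proof, $H^0(\mathsf{f})|_{\mathcal{A}}$, resp.\ $H^0(\mathsf{f}')|_{\mathcal{A}}$, is a quasi-inverse of the restriction of $E$, resp.\ $E'$, to $H^0(\mathsf{C}_{\mid\mathcal{A}})$, so the isomorphisms $EH^0(\mathsf{f})(X)\cong X$ and $E'H^0(\mathsf{f}')(X)\cong X$ can be taken natural in $X\in\mathcal{A}$. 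Since $\mathcal{A}^{\pretr}$ is strongly pretriangulated, $H^0(\mathsf{f})$ and $H^0(\mathsf{f}')$ are triangulated equivalences; applying the lemma to $EH^0(\mathsf{f})$ and $E'H^0(\mathsf{f}')$, viewed as autoequivalences of $\kcomb(\mathcal{A})=H^0(\mathcal{A}^{\pretr})$, gives $EH^0(\mathsf{f})(Y)\cong Y$ and $E'H^0(\mathsf{f}')(Y)\cong Y$ for all $Y$. Then $\mathsf{g}:=\mathsf{f}'\mathsf{f}^{-1}\colon\mathsf{C}\to\mathsf{C}'$ is a quasi-equivalence quasi-functor with $E'H^0(\mathsf{g})(W)=E'H^0(\mathsf{f}')\bigl(H^0(\mathsf{f})^{-1}(W)\bigr)\cong E'H^0(\mathsf{f}')\bigl(E(W)\bigr)\cong E(W)$ for every $W\in\mathsf{C}$, which is exactly the defining property of a semi-strongly unique enhancement.

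To prove the lemma I would induct on the number of degrees in which a bounded complex $Y$ is concentrated; after a shift, take $Y\cong(M^0\xrightarrow{d^0}\cdots\xrightarrow{d^{\ell-1}}M^\ell)$ with terms in degrees $0,\dots,\ell$, and consider the stupid truncations $T_k:=\sigma_{\le k}Y$, which fit into distinguished triangles $M^k[-k]\xrightarrow{u_k}T_k\xrightarrow{p_k}T_{k-1}\xrightarrow{w_k}M^k[-k+1]$ with $T_0=M^0\in\mathcal{A}$ and $T_\ell=Y$. Two facts drive the induction: $w_k\circ u_{k-1}=\pm d^{k-1}$, viewed inside $\hom_{\kcomb(\mathcal{A})}(M^{k-1}[-k+1],M^k[-k+1])=\hom_{\mathcal{A}}(M^{k-1},M^k)$; and the map $u_{k-1}^*\colon\hom_{\kcomb(\mathcal{A})}(T_{k-1},M^k[-k+1])\to\hom_{\mathcal{A}}(M^{k-1},M^k)$ is injective, because the preceding term $\hom_{\kcomb(\mathcal{A})}(T_{k-2},M^k[-k+1])$ of the long exact sequence vanishes: $T_{k-2}$ and $M^k[-k+1]$ are bounded complexes supported in disjoint ranges of degrees, and such complexes have no nonzero morphisms in $\kcomb(\mathcal{A})$. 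Assuming inductively an isomorphism $\theta_{k-1}\colon\Psi(T_{k-1})\xrightarrow{\sim}T_{k-1}$ that is part of a morphism of triangles compatible with the chosen $\eta$, apply $\Psi$ to the triangle for $T_k$ and transport it along $\theta_{k-1}$ and the isomorphisms $\Psi(M^k[-j])\cong M^k[-j]$ induced by $\eta$; this produces a distinguished triangle $M^k[-k]\to\Psi(T_k)\to T_{k-1}\xrightarrow{w_k'}M^k[-k+1]$. Using the compatibility of $\theta_{k-1}$ with $u_{k-1}$ one finds $u_{k-1}^*(w_k')=\pm\,\eta_{M^k}\Psi(d^{k-1})\eta_{M^{k-1}}^{-1}$, which equals $\pm d^{k-1}=u_{k-1}^*(w_k)$ by naturality of $\Psi\circ\iota\cong\iota$; hence $w_k'=w_k$ by injectivity, so $\Psi(T_k)\cong T_k$, and the axioms of triangulated categories provide $\theta_k$ compatible with $u_k$. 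At $k=\ell$ this yields $\Psi(Y)\cong Y$.

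I expect the lemma to be the main obstacle — concretely, the bookkeeping needed to propagate the compatibility of the isomorphisms $\theta_k$ with the connecting morphisms $w_k$ through the induction. The genuinely essential ingredient is the vanishing $\hom_{\kcomb(\mathcal{A})}(P,Q)=0$ for bounded complexes $P,Q$ with disjoint degree supports: it is what makes each $w_k$ detectable from a morphism of $\mathcal{A}$, hence $\Psi(w_k)$ controllable by naturality alone. This is exactly where the argument uses that the category is $\kcomb(\mathcal{A})$; the same scheme fails for a general $\tr(\mathsf{B})$, consistently with the fact that $\kcomb(\mathcal{A})$ need not have a \emph{strongly} unique enhancement. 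One should also note that Proposition \ref{prop:asf} is being invoked in the strengthened, natural form recorded above, since its purely object-wise statement would not suffice to start the induction.
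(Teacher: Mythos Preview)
Your argument is correct and follows essentially the same route as the paper: reduce via triangulated formality of $\mathcal{A}$ to an autoequivalence of $\kcomb(\mathcal{A})$ which is (naturally) the identity on $\mathcal{A}$, then invoke a lemma saying such an autoequivalence is objectwise the identity. The paper does not prove this lemma but quotes it from Chen--Ye \cite[Proposition~3.2]{chenye}; your induction on stupid truncations is precisely a proof of that result, and the vanishing $\hom_{\kcomb(\mathcal{A})}(T_{k-2},M^k[-k+1])=0$ you isolate is indeed the crucial point. The only organizational difference is that the paper works through Proposition~\ref{prop:semistrongenh} (producing a semilift of an arbitrary autoequivalence $F$ for the single enhancement $(\mathcal{A}^{\pretr},\id)$, and inserting an extra correction $\mathsf{g}=(G_{\mid\mathcal{A}})^{\pretr}$ so that the restriction becomes the identity on the nose), whereas you apply Proposition~\ref{prop:asf} to both enhancements directly; both reductions are fine and lead to the same lemma. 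Your observation that the \emph{natural} form of the isomorphism $EH^0(\mathsf{f})|_{\mathcal{A}}\cong\id_{\mathcal{A}}$ is needed, and visible in the proof of Proposition~\ref{prop:asf}, is accurate---the paper sidesteps this by the extra $\mathsf{g}$-correction, which forces $GH^0(\mathsf{g}^{-1})|_{\mathcal{A}}$ to equal $\id_{\mathcal{A}}$ exactly.
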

                \begin{proof}
                We recall that $\kcomb(\mathcal{A})\cong\tr(\mathcal{A})$ by Example \ref{es:homder}, so $\kcomb(\mathcal{A})$ has a unique enhancement from Proposition \ref{prop:asf} and Remark \ref{rem:sfunique}. To simplify the notation, assume $\kcomb(\mathcal{A}) = \tr(\mathcal{A})$.
                We want to show item 3 of Proposition \ref{prop:semistrongenh} for the enhancement $(\mathcal{A}^{\pretr}, \id)$. Let $F$ be an autoequivalence of $\kcomb(\mathcal{A})$. Since $\mathcal{A}$ is triangulated formal by Proposition \ref{prop:asf}, considering the enhancement $(\mathcal{A}^{\pretr},F)$ of $\kcomb(\mathcal{A})$, we get a quasi-equivalence quasi-functor $\mathsf{f}:\mathcal{A}^{\pretr} \to \mathcal{A}^{\pretr}$ such that $F H^0(\mathsf{f}) (X) \cong X$ for all $X \in \mathcal{A}$.
                Let $G:= F H^0(\mathsf{f})$. Then $G_{\mid \mathcal{A}}$ gives an equivalence $\mathcal{A} \to \mathcal{A}$, so we can consider the DG-functor $\mathsf{g}:= (G_{\mid \mathcal{A}})^{\pretr}: \mathcal{A}^{\pretr} \to \mathcal{A}^{\pretr}$. Of course, $GH^0(\mathsf{g}^{-1})$ is the identity when restricted to $\mathcal{A}$. By \cite[Proposition 3.2]{chenye}, $GH^0(\mathsf{g}^{-1}) (X) \cong X$ for all $X \in \kcomb(\mathcal{A})$. By recalling the definition of $G$ and moving the equivalences around, we get $F(X) \cong H^0(\mathsf{g}\mathsf{f}^{-1})(X)$ for all $X \in \kcomb(\mathcal{A})$. This is exactly item 3 of Proposition \ref{prop:semistrongenh}, as wanted.
                \end{proof}
                
                Proposition \ref{prop:realdg} below is inspired by \cite[Theorem 3.2]{kelvos}. In that article, the authors used the definition of algebraic triangulated categories via Frobenius categories (see \cite[Proposition 3.1]{canonacostellari}). Using DG-categories, we are able to say something more, and provide a proof of uniqueness of enhancements for bounded derived categories of exact categories.\footnote{This is already known in the context of stable $\infty$-categories: see \cite[Corollary 7.59]{bunke2019controlled}.} Furthermore, the DG-category $\mathcal{E}_{\dg}$ associated is immediately triangulated formal.
                
                    \begin{prop} \label{prop:realdg}
                        Let $\mathcal{T}$ be an algebraic triangulated category and let $\mathcal{E}$ be an \emph{admissible exact subcategory}, i.e. an extension closed subcategory $\mathcal{E}$ of $\mathcal{T}$ such that $\hom(X,Y[n])=0$ for $n<0$ (this has an induced exact structure by \cite{dyer}). 
                        
                        Then, for any enhancement $(\mathsf{C},E)$ of $\mathcal{T}$, there exists a realization functor $\operatorname{real}: \derb(\mathcal{E}) \to \mathcal{T}$ admitting a $(\derb_{\dg} (\mathcal{E}),\id )-(\mathsf{C},E)$-lift,\footnote{For the sake of simplicity, we assume $\derb(\mathcal{E})= H^0(\derb_{\dg}(\mathcal{E}))$.} where $\derb_{\dg}(\mathcal{E})$ was defined in Example \ref{es:homder}. 
                    \end{prop}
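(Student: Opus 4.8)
The plan is to construct the realization functor \emph{together with} its lift directly from the given enhancement $(\mathsf{C},E)$, following the strategy of the proof of Proposition \ref{prop:asf} but stopping at the truncation rather than reaching a quasi-equivalence, and then using the universal property of the DG-quotient. So first I would assume without loss of generality that $\mathsf{C}$ is strongly pretriangulated and consider $\mathsf{C}_{\mid\mathcal{E}}$ (Notation \ref{nota:restrictdg}), for which $H^0(\mathsf{C}_{\mid\mathcal{E}})\cong\mathcal{E}$ via $E$. For $X,Y\in\mathsf{C}_{\mid\mathcal{E}}$ one has $H^n\hom_{\mathsf{C}}(X,Y)\cong\hom_{\mathcal{T}}(EX,EY[n])$, which vanishes for $n<0$ by admissibility; hence the hom-complexes of $\mathsf{C}_{\mid\mathcal{E}}$ have cohomology concentrated in non-negative degrees. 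Passing to $\tau_{\le 0}(\mathsf{C}_{\mid\mathcal{E}})$ (Definition \ref{def:truncation}) kills the positive part, so the hom-complexes of $\tau_{\le 0}(\mathsf{C}_{\mid\mathcal{E}})$ have cohomology concentrated in degree $0$, equal to $\hom_{\mathcal{E}}$. Therefore the truncation DG-functor $\tau_{\le 0}(\mathsf{C}_{\mid\mathcal{E}})\to H^0(\mathsf{C}_{\mid\mathcal{E}})\cong\mathcal{E}$ (well-defined by Remark \ref{rem:lefttruncation}) is a quasi-equivalence, while the inclusion $\tau_{\le 0}(\mathsf{C}_{\mid\mathcal{E}})\hookrightarrow\mathsf{C}_{\mid\mathcal{E}}\hookrightarrow\mathsf{C}$ records the original DG-structure; together they form a quasi-functor $\mathcal{E}\to\mathsf{C}$ whose $H^0$, composed with $E$, is the inclusion $\mathcal{E}\hookrightarrow\mathcal{T}$.

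Next I would extend. By Proposition \ref{prop:dgextension} and Remark \ref{rem:h0qespretr} the zig-zag $\mathcal{E}\xleftarrow{\simeq}\tau_{\le 0}(\mathsf{C}_{\mid\mathcal{E}})\to\mathsf{C}$ passes to pretriangulated closures, giving a quasi-functor $\mathcal{E}^{\pretr}\to\mathsf{C}$; since $\mathcal{E}^{\pretr}\simeq\comdgb{\mathcal{E}}$ (Example \ref{es:homder}), this yields a quasi-functor $\mathsf{f}:\comdgb{\mathcal{E}}\to\mathsf{C}$ with $E\,H^0(\mathsf{f})|_{\mathcal{E}}$ equal to the inclusion $\mathcal{E}\hookrightarrow\mathcal{T}$. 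In particular $E\,H^0(\mathsf{f}):\kcomb(\mathcal{E})=\tr(\mathcal{E})\to\mathcal{T}$ is a triangulated functor which restricts to the inclusion on $\mathcal{E}$.

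The key step is that $\mathsf{f}$ must kill the acyclic complexes. Since $\mathcal{E}$ is admissible, the exact structure it inherits from $\mathcal{T}$ (via \cite{dyer}) has as conflations precisely the triangles of $\mathcal{T}$ with all three vertices in $\mathcal{E}$, so the inclusion $\mathcal{E}\hookrightarrow\mathcal{T}$ is exact in the sense of sending conflations to distinguished triangles. A standard d\'evissage then shows that any triangulated functor $\kcomb(\mathcal{E})\to\mathcal{T}$ restricting to such an exact functor on $\mathcal{E}$ sends every object of $\acb(\mathcal{E})$ to $0$ (each bounded acyclic complex is a finite iterated extension, inside $\tr(\mathcal{E})$, of conflations placed in three consecutive degrees, and these map to triangles whose total object is zero; alternatively one invokes that $H^0(\mathsf{f})$ and the canonical extension to $\derb(\mathcal{E})$ agree on objects because $\kcomb(\mathcal{E})$ is generated by $\mathcal{E}$ under shifts and cones). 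Hence the composite $\acb_{\dg}(\mathcal{E})\hookrightarrow\comdgb{\mathcal{E}}\xrightarrow{\mathsf{f}}\mathsf{C}$ has vanishing $H^0$, so it is the zero quasi-functor by Remark \ref{rem:0qf}. I expect this to be the main obstacle: one has to verify carefully that the exact structure on $\mathcal{E}$ coming from \cite{dyer} is compatible with the triangulation of $\mathcal{T}$ and carry out the d\'evissage; the remaining manipulations with truncations, pretriangulated closures and quasi-functors are routine given the machinery already in place.

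Finally, by the universal property of the DG-quotient (Definition/Proposition \ref{defp:dgquotient}(2)) applied to $\mathsf{q}:\comdgb{\mathcal{E}}\to\derb_{\dg}(\mathcal{E})$, the quasi-functor $\mathsf{f}$ factors, up to isomorphism of quasi-functors, as $\mathsf{f}\cong\mathsf{g}\,\mathsf{q}$ for a quasi-functor $\mathsf{g}:\derb_{\dg}(\mathcal{E})\to\mathsf{C}$. Setting $\operatorname{real}:=E\,H^0(\mathsf{g}):\derb(\mathcal{E})=H^0(\derb_{\dg}(\mathcal{E}))\to\mathcal{T}$, we get $\operatorname{real}|_{\mathcal{E}}=E\,H^0(\mathsf{f})|_{\mathcal{E}}$, the inclusion $\mathcal{E}\hookrightarrow\mathcal{T}$, so $\operatorname{real}$ is a realization functor; and the identity $\operatorname{real}\cong E\,H^0(\mathsf{g})$ exhibits $\mathsf{g}$ as a $(\derb_{\dg}(\mathcal{E}),\id)-(\mathsf{C},E)$-lift of $\operatorname{real}$, which is exactly what is claimed.
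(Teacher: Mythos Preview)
Your proposal is correct and follows essentially the same route as the paper: truncate $\mathsf{C}_{\mid\mathcal{E}}$ to get a quasi-functor $\mathcal{E}^{\pretr}\simeq\comdgb{\mathcal{E}}\to\mathsf{C}$, show it kills $\acb_{\dg}(\mathcal{E})$, and factor through the DG-quotient. The only place the paper is more explicit is the conflation-killing step, where it writes the conflation-as-complex as the cone of the canonical map $\cone(f)\to C$ in $\kcomb(\mathcal{E})$ and uses the vanishing $\hom_{\mathcal{T}}(A[1],C)=0$ to force this map to go to an isomorphism in $\mathcal{T}$, then invokes Lemma~\ref{lem:genconfl} for the d\'evissage you anticipated.
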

                \begin{proof}
                    As our reasoning will not be affected by the quasi-equivalence inclusion $\mathsf{y}:\mathsf{C} \to \mathsf{C}^{\pretr}$, for the sake of simplicity we assume $\mathsf{C}$ to be strongly pretriangulated, and consider the DG-functor $(\mathsf{C}_{\mid \mathcal{E}})^{\pretr} \to \mathsf{C}$ obtained by Proposition \ref{prop:dgextension}.
                
                   From the natural truncation $\tau_{\le 0} \mathsf{C}_{\mid \mathcal{E}} \to \mathsf{C}_{\mid \mathcal{E}}$, and the quasi-equivalence $\tau_{\le 0}\mathsf{C}_{\mid \mathcal{E}} \to H^0(\mathsf{C}_{\mid \mathcal{E}})\cong \mathcal{E}$ (this is a quasi-equivalence because $\mathcal{E}\subset \mathcal{T}$ is admissible by assumption), Proposition \ref{prop:dgextension} gives rise to a quasi-functor $\mathsf{f}: \mathcal{E}^{\pretr} \to (\mathsf{C}_{\mid \mathcal{E}})^{\pretr} \to \mathsf{C}$. At the homotopy level, this defines a triangulated functor $\kcomb(\mathcal{E}) \to \mathcal{T}$ (recall Example \ref{es:homder}).
                
                   We now want to prove that $\acb(\mathcal{E}) \to \kcomb(\mathcal{E}) \to \mathcal{T}$ is the zero functor. Indeed, given any conflation $0 \to A \to B \to C \to 0$ in $\mathcal{E}$, we obtain a commutative diagram
                    \[
                    \begin{tikzcd}
                     A\ar[d,equal] \ar[r,"f"] & B \ar[r]\ar[d,equal] & \cone (f)\ar[r] \ar[d,"\cong"]& A[1]\ar[d,equal]\\
                     A \ar[r,"f"] & B \ar[r] & C\ar[r] & A[1]
                    \end{tikzcd}
                    \]
                    of distinguished triangles in $\mathcal{T}$.
                    Since $\hom(A[1], C)=\hom(A,C[-1])=0$, the morphism $\cone (f) \to C $ is determined by $B \to \cone (f) \to C$, which is exactly the map appearing in the conflation. Looking at $\cone (f) \to C$ in $\kcomb(\mathcal{E})$, its cone is the conflation $0\to A \to B \to C\to 0$, and its image is zero since $\cone(f) \to C$ is an isomorphism in $\mathcal{T}$. This shows that conflations are sent to zero via $\kcomb(\mathcal{E}) \to \mathcal{T}$. By Lemma \ref{lem:genconfl}, we conclude that $\acb(\mathcal{E}) \to \kcomb(\mathcal{E}) \to \mathcal{T}$ is the zero functor.
                
                    By Remark \ref{rem:0qf}, at the DG-level we have that $\acb_{\dg}(\mathcal{E}) \to \comdgb{\mathcal{E}} \cong \mathcal{E}^{\pretr} \to \mathsf{C}$ is the trivial quasi-functor. Therefore, we obtain an induced quasi-functor $\mathsf{r}:\derb_{\dg}(\mathcal{E}) \to \mathsf{C}$ satisfying the statement.
                \end{proof}
                \begin{cor}\label{cor:dbunique}
                    For any exact category $\mathcal{E}$, $\derb(\mathcal{E})$ has a unique enhancement (cf. \cite[Corollary 7.59]{bunke2019controlled}). More precisely, the full DG-subcategory $\mathcal{E}_{\dg}:= \derb_{\dg}(\mathcal{E})_{\mid \mathcal{E}}$ is triangulated formal. 
                    
                    In addition, given any enhancement $(\mathsf{C},E)$ of $\derb(\mathcal{E})$, the pretriangulated closure of the truncation $\mathsf{p}_{\le 0}: \tau_{\le 0}\mathsf{C}_{\mid \mathcal{E}} \to \mathsf{C}_{\mid \mathcal{E}}$ gives rise to a DG-quotient.
                \end{cor}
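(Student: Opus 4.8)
The plan is to apply Proposition~\ref{prop:realdg} with $\mathcal{T}=\derb(\mathcal{E})$ and with $\mathcal{E}$ itself, viewed through complexes concentrated in degree $0$ as a subcategory of $\derb(\mathcal{E})$. First I would check that this embedding exhibits $\mathcal{E}$ as an \emph{admissible exact subcategory} of $\derb(\mathcal{E})$: it is extension closed (the cone of a map $Y[-1]\to X$ with $X,Y\in\mathcal{E}$ is the middle term of the conflation classified by the corresponding element of $\ext^1_{\mathcal{E}}(Y,X)=\hom_{\derb(\mathcal{E})}(Y,X[1])$), and $\hom_{\derb(\mathcal{E})}(X,Y[n])=\ext^n_{\mathcal{E}}(X,Y)=0$ for $n<0$; moreover the exact structure induced on $\mathcal{E}$ is the original one. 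Hence, for any enhancement $(\mathsf{C},E)$ of $\derb(\mathcal{E})$, Proposition~\ref{prop:realdg} produces a realization functor $\operatorname{real}\colon\derb(\mathcal{E})\to\derb(\mathcal{E})$ together with a $(\derb_{\dg}(\mathcal{E}),\id)-(\mathsf{C},E)$-lift $\mathsf{r}\colon\derb_{\dg}(\mathcal{E})\to\mathsf{C}$; unwinding its construction, $\operatorname{real}$ restricts on $\mathcal{E}$ to the canonical inclusion $\incl\colon\mathcal{E}\hookrightarrow\derb(\mathcal{E})$.

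The core step --- which I also expect to be the main obstacle --- is to prove that $\operatorname{real}$ is an equivalence. Since the triangulated subcategory of $\derb(\mathcal{E})$ generated by $\mathcal{E}$ is everything and $\mathcal{E}_{\dg}\hookrightarrow\derb_{\dg}(\mathcal{E})$ is fully faithful, $\tr(\mathcal{E}_{\dg})\cong\derb(\mathcal{E})$; as $H^0(\mathcal{E}_{\dg})=\mathcal{E}\subset\essim(\operatorname{real})$, Remark~\ref{rem:dgtaylor} reduces the claim to the full faithfulness of $\operatorname{real}$. By a d\'evissage over $\mathcal{E}$ (in each variable the full-faithfulness locus is a triangulated subcategory containing the generating class $\mathcal{E}$), this amounts to showing that $\operatorname{real}$ induces an isomorphism on $\hom_{\derb(\mathcal{E})}(X,Y[n])=\ext^n_{\mathcal{E}}(X,Y)$ for $X,Y\in\mathcal{E}$ and every $n$. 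For $n\le 0$ this is clear ($n=0$ because $\operatorname{real}|_{\mathcal{E}}\cong\incl$, and both groups vanish for $n<0$). For $n\ge 1$ I would represent a class by an $n$-fold extension in $\mathcal{E}$, break it into conflations, and write it as the composite of the connecting maps of the associated distinguished triangles; admissibility is essential here, because the connecting map of a conflation between objects of $\mathcal{E}$ is already determined by its in- and deflation --- two distinguished triangles with the same first two morphisms differ in the third by a map factoring through a group of the form $\hom_{\derb(\mathcal{E})}(-[1],-)=\ext^{-1}_{\mathcal{E}}=0$. Since $\operatorname{real}$ is triangulated and fixes $\mathcal{E}$ together with its morphisms up to the natural isomorphism $\operatorname{real}|_{\mathcal{E}}\cong\incl$, applying $\operatorname{real}$ to such a triangle and conjugating by this isomorphism yields a distinguished triangle with the same first two morphisms, hence the same connecting map; feeding this into the composite (the conjugating isomorphisms telescope) shows that $\operatorname{real}$ fixes every $\ext^n$ class. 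Making this telescoping precise, including the compatibility of $\operatorname{real}|_{\mathcal{E}}\cong\incl$ with the shifts occurring in an $n$-fold extension, is the technical heart, and is where Neeman's insight enters.

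Granting that $\operatorname{real}$ is an equivalence, $H^0(\mathsf{r})\cong E^{-1}\operatorname{real}$ is an equivalence, so the quasi-functor $\mathsf{r}\colon\derb_{\dg}(\mathcal{E})\to\mathsf{C}$ between pretriangulated DG-categories is a quasi-equivalence, with $EH^0(\mathsf{r})(X)=\operatorname{real}(X)\cong X$ for $X\in\mathcal{E}$. Composing with the natural quasi-equivalence $(\mathcal{E}_{\dg})^{\pretr}\simeq\derb_{\dg}(\mathcal{E})$ (both are pretriangulated with $H^0\cong\derb(\mathcal{E})$, compatibly on $\mathcal{E}$), one obtains a quasi-equivalence quasi-functor $(\mathcal{E}_{\dg})^{\pretr}\to\mathsf{C}$ satisfying \eqref{eq:sf2} for all $X\in H^0(\mathcal{E}_{\dg})=\mathcal{E}$; that is precisely the defining condition of triangulated formality, so $\mathcal{E}_{\dg}$ is triangulated formal, and Remark~\ref{rem:sfunique} then gives that $\tr(\mathcal{E}_{\dg})\cong\derb(\mathcal{E})$ has a unique enhancement.

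For the last assertion, I would trace through the construction of $\mathsf{r}$ in the proof of Proposition~\ref{prop:realdg}: it is obtained by extending, along Proposition~\ref{prop:dgextension}, the zig-zag $\mathcal{E}\xleftarrow{\simeq}\tau_{\le 0}\mathsf{C}_{\mid\mathcal{E}}\xrightarrow{\mathsf{p}_{\le 0}}\mathsf{C}_{\mid\mathcal{E}}\hookrightarrow\mathsf{C}$ to a quasi-functor $\comdgb{\mathcal{E}}\simeq\mathcal{E}^{\pretr}\to\mathsf{C}$ --- the map $\tau_{\le 0}\mathsf{C}_{\mid\mathcal{E}}\to H^0(\mathsf{C}_{\mid\mathcal{E}})\cong\mathcal{E}$ being a quasi-equivalence because, by admissibility, the hom-complexes of $\mathsf{C}_{\mid\mathcal{E}}$ have cohomology in non-negative degrees --- and then descending along the canonical DG-quotient $\comdgb{\mathcal{E}}\to\comdgb{\mathcal{E}}/\acb_{\dg}(\mathcal{E})=\derb_{\dg}(\mathcal{E})$, which is legitimate since $\acb_{\dg}(\mathcal{E})\to\mathsf{C}$ is the zero quasi-functor (Remark~\ref{rem:0qf}, conflations going to zero as in the proof of Proposition~\ref{prop:realdg}). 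Thus, after the identifications $(\tau_{\le 0}\mathsf{C}_{\mid\mathcal{E}})^{\pretr}\simeq\comdgb{\mathcal{E}}$ and $(\mathsf{C}_{\mid\mathcal{E}})^{\pretr}\simeq\mathsf{C}$, the pretriangulated closure of $\mathsf{p}_{\le 0}\colon\tau_{\le 0}\mathsf{C}_{\mid\mathcal{E}}\to\mathsf{C}_{\mid\mathcal{E}}$ equals the composite of the canonical DG-quotient $\comdgb{\mathcal{E}}\to\derb_{\dg}(\mathcal{E})$ with the quasi-equivalence $\mathsf{r}$; since composing a DG-quotient quasi-functor with a quasi-equivalence again yields a DG-quotient quasi-functor (immediate from the universal property in Definition/Proposition~\ref{defp:dgquotient}), it follows that the pretriangulated closure of $\mathsf{p}_{\le 0}$ gives rise to a DG-quotient, as claimed.
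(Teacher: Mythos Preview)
Your approach is essentially the same as the paper's: both apply Proposition~\ref{prop:realdg} with $\mathcal{T}=\derb(\mathcal{E})$, argue that the resulting realization functor is an equivalence (hence its lift $\mathsf{r}$ is a quasi-equivalence fixing $\mathcal{E}$, giving triangulated formality of $\mathcal{E}_{\dg}$), and read off the DG-quotient statement from the construction in that proof. The only difference is that the paper outsources the full faithfulness of $\operatorname{real}$ to \cite[Corollary~A.7.1 and Proposition~A.7]{positselski} and \cite[Corollary~2.8]{chenhanzhou} (with details in \cite{lorenzinphd}), whereas you sketch directly the underlying d\'evissage via the uniqueness of connecting morphisms forced by $\ext^{-1}_{\mathcal{E}}=0$, which is precisely the content of those references.
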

                \begin{proof}
                    Notice that $\tr(\mathcal{E}_{\dg})\cong \derb(\mathcal{E})$ since $\tr(\mathcal{E}_{\dg}) \subset H^0(\derb_{\dg}(\mathcal{E}))\cong \derb(\mathcal{E})$ and $\tr(\mathcal{E}_{\dg})$ is the triangulated envelope of $\mathcal{E}$. By applying Proposition \ref{prop:realdg} to $\mathcal{T}=\derb(\mathcal{E})$, we can construct a quasi-equivalence quasi-functor between any enhancement of $\derb(\mathcal{E})$ and $\derb_{\dg}(\mathcal{E})$ (the quasi-functor is a quasi-equivalence by 
                    \cite[Corollary A.7.1 and Proposition A.7]{positselski} and \cite[Corollary 2.8]{chenhanzhou}, which holds also for exact categories; more details are given in \cite{lorenzinphd}). Moreover, this quasi-equivalence fixes $\mathcal{E}$, so \eqref{eq:sf2} is satisfied. The last part of the statement follows from the construction of the realization functor in the proof of Proposition \ref{prop:realdg}.
                \end{proof}
                
                \begin{prop}\label{prop:derbssu}
                Let $\mathcal{E}$ be an exact category. Then $\derb(\mathcal{E})$ has a semi-strongly unique enhancement.
                \end{prop}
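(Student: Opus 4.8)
The plan is to run the argument of Proposition \ref{prop:kcomba} with $\kcomb(\mathcal{A})$ replaced by $\derb(\mathcal{E})\cong\tr(\mathcal{E}_{\dg})$ and with the triangulated formality of $\mathcal{A}$ (Proposition \ref{prop:asf}) replaced by that of $\mathcal{E}_{\dg}$ (Corollary \ref{cor:dbunique}); the latter also supplies the uniqueness of the enhancement needed to invoke Proposition \ref{prop:semistrongenh}. Concretely, I would verify item~3 of Proposition \ref{prop:semistrongenh} for the enhancement $(\mathcal{E}_{\dg}^{\pretr},\iota)$, where $\iota\colon\tr(\mathcal{E}_{\dg})\xrightarrow{\sim}\derb(\mathcal{E})$. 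Given a triangulated autoequivalence $F$ of $\derb(\mathcal{E})$, applying triangulated formality of $\mathcal{E}_{\dg}$ to the enhancement $(\mathcal{E}_{\dg}^{\pretr},F\iota)$ produces a quasi-equivalence quasi-functor $\mathsf{f}\colon\mathcal{E}_{\dg}^{\pretr}\to\mathcal{E}_{\dg}^{\pretr}$ with $F\,H^0(\mathsf{f})(X)\cong X$ for all $X\in H^0(\mathcal{E}_{\dg})\cong\mathcal{E}$. Setting $G:=F\,H^0(\mathsf{f})$ we obtain a triangulated autoequivalence of $\derb(\mathcal{E})$ with $G(X)\cong X$ for every $X\in\mathcal{E}$; since a semilift of $G$ precomposed with $\mathsf{f}^{-1}$ is a semilift of $F$, it remains to semilift $G$.

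Next I would promote this pointwise restriction to a natural one, exactly as in the $\kcomb$ case. The restriction $g:=G_{\mid\mathcal{E}}$ is an exact autoequivalence of $\mathcal{E}$: it is fully faithful because $G$ is and $\mathcal{E}\hookrightarrow\derb(\mathcal{E})$ is, it is essentially surjective onto $\mathcal{E}$ because $G(X)\cong X$ there, and it is exact because $G$ is triangulated and a triangle in $\derb(\mathcal{E})$ with all three vertices in $\mathcal{E}$ is isomorphic to the one attached to a conflation. Applying $g$ termwise to complexes gives a DG-equivalence of $\comdgb{\mathcal{E}}$ preserving $\acb_{\dg}(\mathcal{E})$, hence a quasi-equivalence quasi-functor $\bar{g}$ on $\derb_{\dg}(\mathcal{E})$ preserving $\mathcal{E}_{\dg}$; passing to the pretriangulated closure yields $\mathsf{g}\colon\mathcal{E}_{\dg}^{\pretr}\to\mathcal{E}_{\dg}^{\pretr}$ with $H^0(\mathsf{g})_{\mid\mathcal{E}}$ (essentially) equal to $g$. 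Consequently $H:=G\,H^0(\mathsf{g})^{-1}$ is a triangulated autoequivalence of $\derb(\mathcal{E})$ whose restriction to $\mathcal{E}$ is \emph{naturally} isomorphic to $\id_{\mathcal{E}}$.

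The heart of the matter is then the $\derb$-analogue of \cite[Proposition 3.2]{chenye}: a triangulated endofunctor $H$ of $\derb(\mathcal{E})$ with $H_{\mid\mathcal{E}}$ naturally isomorphic to $\id_{\mathcal{E}}$ satisfies $H(X)\cong X$ for all $X\in\derb(\mathcal{E})$. I would prove this by dévissage on the width of a bounded complex $X$ over $\mathcal{E}$: the width-one case is the hypothesis together with $H$ commuting with shifts, and for the inductive step one uses the brutal-truncation triangle $\sigma_{>a}X\to X\to X^{a}[-a]\xrightarrow{w}\sigma_{>a}X[1]$, observing that naturality of the isomorphism $\id_{\mathcal{E}}\cong H_{\mid\mathcal{E}}$ along the differential $d^{a}\colon X^{a}\to X^{a+1}$ forces the transported connecting morphism $H(w)$ to coincide with $w$, so that $H(X)\cong\cone(w)[-1]\cong X$. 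Applying this to our $H$ gives $F\cong H^0(\mathsf{g}\mathsf{f}^{-1})$ pointwise, so that $\mathsf{g}\mathsf{f}^{-1}$ is the desired $(\mathcal{E}_{\dg}^{\pretr},\iota)$-semilift of $F$ and Proposition \ref{prop:semistrongenh} concludes. I expect the main obstacle to be precisely this dévissage: one must make the isomorphisms $H(X)\cong X$ coherent with the truncation triangles so that the induction actually propagates, and establish the analogue of \cite[Proposition 3.2]{chenye} in the generality of an arbitrary exact category — for which the realization functor of Proposition \ref{prop:realdg} is likely to be the right tool.
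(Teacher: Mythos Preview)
Your proposal is correct and follows essentially the same approach as the paper: the paper's proof is a terse instruction to mimic Proposition~\ref{prop:kcomba} using the enhancement $\derb_{\dg}(\mathcal{E})$, noting that $G_{\mid\mathcal{E}}$ is an exact equivalence (hence induces a quasi-functor on $\derb_{\dg}(\mathcal{E})$ via the DG-quotient) and that the pseudo-identity arguments of \cite[Propositions~3.2 and~3.7]{chenye} adapt to exact categories. Your sketch spells out precisely these steps, including the d\'evissage you correctly flag as the delicate point; the paper leaves that adaptation implicit.
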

                \begin{proof}
                    Mimicking the proof of Proposition \ref{prop:kcomba} with the natural enhancement obtained by $\derb_{\dg}(\mathcal{E})$, the statement follows since the reasoning of \cite[Proposition 3.2 and Proposition 3.7]{chenye} can be adapted to this setting. The only detail to be precise about is the fact that $G_{\mid \mathcal{E}}$ gives an exact equivalence $\mathcal{E}\to \mathcal{E}$, which gives a DG-functor $\comdgb{\mathcal{E}} \to \comdgb{\mathcal{E}}$ inducing a quasi-functor $\mathsf{g}:\derb_{\dg}(\mathcal{E}) \to \derb_{\dg}(\mathcal{E})$ via the property of DG-quotients.
                \end{proof}
                \begin{rem}
                    Since additive categories and abelian categories are examples of exact categories, Proposition \ref{prop:derbssu} in fact generalizes both Proposition \ref{prop:kcomba} and the bounded case of \cite[Remark 5.4]{canonaco2021uniqueness}, which shows semi-strong uniqueness of enhancements for the derived categories of abelian categories under any boundedness requirement.    
                \end{rem}
                Let us state some results relating triangulated formality with the uniqueness of enhancements.
                First of all, we motivate why we avoided the notion of triangulated formality for perfect complexes.
            \begin{prop} \label{prop:perfectsf2}
                A DG-category $\mathsf{A}$ is triangulated formal if and only if the following holds
                \begin{description}
                    \item[cTF] For any enhancement $(\mathsf{C},E)$ of $\dc{\mathsf{A}}$, we can choose a quasi-equivalence quasi-functor $\mathsf{f}:\perf(\mathsf{A})\to \mathsf{C}$ satisfying \eqref{eq:sf2}.
                \end{description}
            \end{prop}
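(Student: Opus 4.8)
The plan is to prove the two implications separately, in both cases shuttling enhancements between $\tr(\mathsf{A})$ and $\dc{\mathsf{A}}$ via $\perf(-)$, with restriction to $\tr(\mathsf{A})$ (Notation~\ref{nota:restrictdg}) playing the role of a one-sided inverse, exactly in the spirit of Proposition~\ref{prop:dcatra} and Proposition~\ref{prop:uniquetrdc}. I will freely use the following facts, all already available: $\perf(-)$ preserves quasi-equivalences; there is a canonical quasi-equivalence $\perf(\mathsf{A}^{\pretr})\simeq\perf(\mathsf{A})$; for any enhancement $(\mathsf{C},E)$ of $\dc{\mathsf{A}}$ both $\mathsf{y}\colon\mathsf{C}\to\perf(\mathsf{C})$ and $\perf(\incl)\colon\perf(\mathsf{C}_{\mid\tr(\mathsf{A})})\to\perf(\mathsf{C})$ are quasi-equivalences (by Lemma~\ref{lem:dgtaylor} and Remark~\ref{rem:dgtaylor}, as in the proof of Proposition~\ref{prop:uniquetrdc}(1)); and $\mathsf{A}^{\pretr}\hookrightarrow\perf(\mathsf{A})_{\mid\tr(\mathsf{A})}$ is a quasi-equivalence (it is fully faithful and quasi-essentially surjective, since $\tr(\mathsf{A})=H^0(\mathsf{A}^{\pretr})$ and, by Lemma~\ref{lem:restrictdg}, $\perf(\mathsf{A})_{\mid\tr(\mathsf{A})}$ is closed under homotopy equivalence), and likewise $\mathsf{D}\hookrightarrow\perf(\mathsf{D})_{\mid\tr(\mathsf{A})}$ for any enhancement $(\mathsf{D},E)$ of $\tr(\mathsf{A})$.

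For \textbf{TF} $\Rightarrow$ \textbf{cTF}, I would start from an enhancement $(\mathsf{C},E)$ of $\dc{\mathsf{A}}$; since $\tr(\mathsf{A})$ is a full triangulated subcategory of $\dc{\mathsf{A}}$, $(\mathsf{C}_{\mid\tr(\mathsf{A})},E)$ is an enhancement of $\tr(\mathsf{A})$. Triangulated formality then yields a quasi-equivalence quasi-functor $\mathsf{g}\colon\mathsf{A}^{\pretr}\to\mathsf{C}_{\mid\tr(\mathsf{A})}$ with $EH^0(\mathsf{g})(X)\cong X$ for all $X\in H^0(\mathsf{A})$. Applying $\perf(-)$ and splicing in the comparison quasi-equivalences above gives the zig-zag
\[
\perf(\mathsf{A})\xleftarrow{\ \simeq\ }\perf(\mathsf{A}^{\pretr})\xrightarrow{\ \perf(\mathsf{g})\ }\perf(\mathsf{C}_{\mid\tr(\mathsf{A})})\xrightarrow{\ \perf(\incl)\ }\perf(\mathsf{C})\xleftarrow{\ \mathsf{y}\ }\mathsf{C},
\]
all of whose arrows are quasi-equivalences; this defines a quasi-equivalence quasi-functor $\mathsf{f}\colon\perf(\mathsf{A})\to\mathsf{C}$, and since each arrow is the identity on the image of $H^0(\mathsf{A})$ up to the relevant identifications, \eqref{eq:sf2} is inherited from $\mathsf{g}$.

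For \textbf{cTF} $\Rightarrow$ \textbf{TF}, I would start from an enhancement $(\mathsf{D},E)$ of $\tr(\mathsf{A})$. Since $\tr(\mathsf{D})\cong H^0(\mathsf{D})\cong\tr(\mathsf{A})$, by \cite[Theorem~1.5]{balmerschlichting} the functor $E$ extends, uniquely up to natural isomorphism, to an equivalence $E'\colon\dc{\mathsf{D}}=H^0(\perf(\mathsf{D}))\to\dc{\mathsf{A}}$ restricting to $E$ on $H^0(\mathsf{D})$; thus $(\perf(\mathsf{D}),E')$ is an enhancement of $\dc{\mathsf{A}}$. Then \textbf{cTF} provides a quasi-equivalence quasi-functor $\mathsf{h}\colon\perf(\mathsf{A})\to\perf(\mathsf{D})$ with $E'H^0(\mathsf{h})(X)\cong X$ for all $X\in H^0(\mathsf{A})$. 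The triangulated equivalence $\Phi:=E'H^0(\mathsf{h})$ of $\dc{\mathsf{A}}$, together with its inverse, fixes $H^0(\mathsf{A})$ on objects, hence preserves the triangulated subcategory it generates, namely $\tr(\mathsf{A})$; so $\Phi$ restricts to an autoequivalence of $\tr(\mathsf{A})$, and by Lemma~\ref{lem:restrictdg} the quasi-functor $\mathsf{h}$ restricts to a quasi-equivalence quasi-functor $\mathsf{h}'\colon\perf(\mathsf{A})_{\mid\tr(\mathsf{A})}\to\perf(\mathsf{D})_{\mid\tr(\mathsf{A})}$. Composing with the quasi-equivalences $\mathsf{A}^{\pretr}\xrightarrow{\simeq}\perf(\mathsf{A})_{\mid\tr(\mathsf{A})}$ and $\perf(\mathsf{D})_{\mid\tr(\mathsf{A})}\xleftarrow{\simeq}\mathsf{D}$ produces a quasi-equivalence quasi-functor $\mathsf{f}\colon\mathsf{A}^{\pretr}\to\mathsf{D}$, and chasing objects (using that $E'$ restricts to $E$) gives $EH^0(\mathsf{f})(X)\cong E'H^0(\mathsf{h})(X)\cong X$ for $X\in H^0(\mathsf{A})$, which is \eqref{eq:sf2}.

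I expect the only real obstacle to be bookkeeping: verifying that $\perf(-)$ genuinely inverts $(-)_{\mid\tr(\mathsf{A})}$ up to quasi-equivalence, that the Balmer--Schlichting extension $E'$ is compatible with $E$, and that all the composite quasi-functors respect \eqref{eq:sf2} at the level of objects. All of this is already packaged in Lemma~\ref{lem:dgtaylor}, Remark~\ref{rem:dgtaylor}, Lemma~\ref{lem:restrictdg} and the proofs of Propositions~\ref{prop:dcatra} and \ref{prop:uniquetrdc}, so no essentially new idea is needed.
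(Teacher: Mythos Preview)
Your proof is correct and follows essentially the same route as the paper: in both directions you pass between enhancements of $\tr(\mathsf{A})$ and $\dc{\mathsf{A}}$ via $\perf(-)$ and restriction $(-)_{\mid \tr(\mathsf{A})}$, invoking the same auxiliary results (Lemma~\ref{lem:dgtaylor}, Remark~\ref{rem:dgtaylor}, Lemma~\ref{lem:restrictdg}, and \cite[Theorem~1.5]{balmerschlichting}). The only cosmetic difference is that in \textbf{TF}~$\Rightarrow$~\textbf{cTF} the paper observes that the arrows in the zig-zag are quasi-fully faithful and then applies Lemma~\ref{lem:dgtaylor} once to the composite, whereas you verify each arrow is a quasi-equivalence separately; in \textbf{cTF}~$\Rightarrow$~\textbf{TF} the paper restricts $\mathsf{g}$ directly to $\mathsf{A}^{\pretr}$ and appeals to Remark~\ref{rem:dgtaylor}, while you go through the intermediate $\perf(\mathsf{A})_{\mid\tr(\mathsf{A})}$---but these are the same argument.
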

            \begin{proof}
                TF $\Rightarrow$ cTF. Let $(\mathsf{C},E)$ be an enhancement of $\dc{\mathsf{A}}$ and consider the restriction $\mathsf{C}_{\mid \tr(\mathsf{A})}$ (recall Notation \ref{nota:restrictdg}). By triangulated formality, we get a quasi-equivalence quasi-functor $\mathsf{f}':\mathsf{A}^{\pretr} \to \mathsf{C}_{\mid \tr(\mathsf{A})}$ satisfying \eqref{eq:sf2}.
                Denote by $\mathsf{f}$ the following composition:
                \[
                \begin{tikzcd}
                \perf(\mathsf{A}) \ar[r,hook,"\perf(\mathsf{y})"]&\perf(\mathsf{A}^{\pretr}) \ar[r,"\perf({\mathsf{f}'})"]&\perf(\mathsf{C}_{\mid \tr(A)}) \ar[r,hook,"\perf(\incl)"]& \perf (\mathsf{C})& \mathsf{C}\ar[l,hook',"\mathsf{y}" above]
                \end{tikzcd}
                \]
                Notice that $EH^0(\mathsf{f})(X)\cong X$ for all $X \in H^0(\mathsf{A})$, and all DG-functors considered in the composition are quasi-fully faithful. Lemma \ref{lem:dgtaylor} shows that $\mathsf{f}$ is a quasi-equivalence.

                cTF $\Rightarrow$ TF. Given any enhancement $(\mathsf{D},F)$ of $\tr(\mathsf{A})$, then $\perf(\mathsf{D})$ is an enhancement of $\dc{\mathsf{A}}$ with the unique extension of $F$ (we recall Remark \ref{rem:dc}). Moreover, $\mathsf{D} \cong \perf(\mathsf{D})_{\mid \tr(\mathsf{A})}$ via inclusion. By cTF, we have a quasi-equivalence quasi-functor $\mathsf{g}: \perf (\mathsf{A}) \to \perf (\mathsf{D})$ satisfying \eqref{eq:sf2}. Restricting $\mathsf{g}$ to $\mathsf{A}^{\pretr}$, by Remark \ref{rem:dgtaylor} we get a quasi-equivalence $\mathsf{A}^{\pretr} \to \perf(\mathsf{D})_{\mid \tr(\mathsf{A})}$ satisfying \eqref{eq:sf2}.\end{proof}
            
            \begin{rem}\label{}
            From Proposition \ref{prop:perfectsf2}, $\dc{\mathsf{A}}$ has a unique enhancement for any triangulated formal  DG-category $\mathsf{A}$.
            \end{rem}
            \begin{prop}\label{prop:usf}
                A triangulated formal DG-category $\mathsf{A}$ is also unbounded triangulated formal.
            \end{prop}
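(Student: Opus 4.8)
The strategy is to deduce the unbounded condition uTF from the ``compact'' reformulation cTF of triangulated formality given in Proposition \ref{prop:perfectsf2}, using that an enhancement of a compactly generated algebraic triangulated category is recovered, up to quasi-equivalence, by applying $\semifree(-)$ to its full subcategory of compact objects. This is exactly the device already exploited in the proof of Proposition \ref{prop:uniquetrdc}; the only new feature is that one of the two enhancements in play is now the tautological one $\semifree(\mathsf{A})$, so the normalization \eqref{eq:sf2} can be tracked. (That $\derdg(\mathsf{A})$ has a unique enhancement is already known from Proposition \ref{prop:uniquetrdc}, so the content of uTF is precisely this extra normalization.)

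Concretely, let $(\mathsf{C},E)$ be an enhancement of $\derdg(\mathsf{A})$ and pass to $\mathsf{C}_{\mid \dc{\mathsf{A}}}$, the full DG-subcategory of $\mathsf{C}$ on those objects whose image under $E$ is compact; by Remark \ref{rem:dc} this is an enhancement of $\dc{\mathsf{A}}$. Since $\mathsf{A}$ is triangulated formal, cTF (Proposition \ref{prop:perfectsf2}) furnishes a quasi-equivalence quasi-functor $\mathsf{g}\colon \perf(\mathsf{A}) \to \mathsf{C}_{\mid \dc{\mathsf{A}}}$ with $EH^0(\mathsf{g})(X)\cong X$ for all $X\in H^0(\mathsf{A})$. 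Then I would form the zig-zag
\[
\semifree(\mathsf{A}) \xrightarrow{\ \psi\ } \semifree(\perf(\mathsf{A})) \xrightarrow{\ \semifree(\mathsf{g})\ } \semifree(\mathsf{C}_{\mid \dc{\mathsf{A}}}) \xleftarrow{\ \phi\ } \mathsf{C},
\]
where $\phi$ is the quasi-functor from \cite[Section~1]{luntsorlov} already used in Proposition \ref{prop:uniquetrdc}, and $\psi$ is the instance of the same construction for the enhancement $\semifree(\mathsf{A})$ of $\derdg(\mathsf{A})$ --- here one uses the identification $\semifree(\mathsf{A})_{\mid \dc{\mathsf{A}}} = \perf(\mathsf{A})$, which follows from Lemma \ref{lem:restrictdg} and the definition of $\perf$. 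By \cite[Proposition~1.17]{luntsorlov} both $\psi$ and $\phi$ are quasi-equivalences (one needs that $H^0(\mathsf{A})$, resp.\ $\dc{\mathsf{A}}$, is a set of compact generators, and that $\dc{\mathsf{A}}$ is the whole subcategory of compact objects of $\derdg(\mathsf{A})$, by Remark \ref{rem:dc}), while $\semifree(\mathsf{g})$ is a quasi-equivalence because $\mathsf{g}$ is one (\cite[Theorem~10.12.5.1]{bernlunts}, or \cite[Example~7.2]{k1}). Hence $\mathsf{f} := \phi^{-1}\circ\semifree(\mathsf{g})\circ\psi\colon \semifree(\mathsf{A})\to\mathsf{C}$ is a quasi-equivalence quasi-functor.

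It remains to verify \eqref{eq:sf2}. The functors $\psi$ and $\phi$ restrict, up to the canonical quasi-isomorphisms, to the Yoneda embeddings of $\perf(\mathsf{A})$, resp.\ $\mathsf{C}_{\mid \dc{\mathsf{A}}}$, and $\semifree(\mathsf{g})$ carries a representable $\mathsf{y}(X)$ to the representable $\mathsf{y}(\mathsf{g}(X))$; tracing $X\in H^0(\mathsf{A})$ through the zig-zag thus yields $H^0(\mathsf{f})(X)\cong\mathsf{g}(X)$ as an object of $\mathsf{C}$, so that $EH^0(\mathsf{f})(X)\cong EH^0(\mathsf{g})(X)\cong X$.

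The only genuinely delicate point --- everything else being a direct combination of cTF with the cited $\semifree$-invariance statements --- is the compatibility of the Lunts--Orlov functors $\psi,\phi$ with the Yoneda embeddings on the subcategories of compact objects, which is exactly what makes the normalization \eqref{eq:sf2} survive the zig-zag. (One could run the same argument with $\mathsf{A}^{\pretr}$, $\tr(\mathsf{A})$ and the defining property TF in place of $\perf(\mathsf{A})$, $\dc{\mathsf{A}}$ and cTF, since $\tr(\mathsf{A})$ is likewise a set of compact generators of $\derdg(\mathsf{A})$.)
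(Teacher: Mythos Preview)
Your proof is correct and follows essentially the same approach as the paper: restrict the enhancement to compact objects, apply cTF (Proposition \ref{prop:perfectsf2}) to obtain a quasi-equivalence into $\mathsf{C}_{\mid \dc{\mathsf{A}}}$, and then use the Lunts--Orlov quasi-functors $\phi$ together with $\semifree(-)$ to build the required zig-zag, checking \eqref{eq:sf2} by observing that the $\phi$-functors are compatible with the Yoneda embeddings on the compact subcategories. The paper's proof is the same in structure and in the references invoked; the only differences are notational.
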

            \begin{proof}
                Let $(\mathsf{C},E)$ be an enhancement of $\derdg(\mathsf{A})$ and consider $\mathsf{C}':= \mathsf{C}_{\mid \dc{\mathsf{A}}}$. By Proposition \ref{prop:perfectsf2}, we obtain a quasi-equivalence quasi-functor $\mathsf{f}: \perf(\mathsf{A}) \to \mathsf{C}'$ satisfying \eqref{eq:sf2}. Let us define $\mathsf{h}$ as the composition  
                \[
                \begin{tikzcd}
                    \semifree(\mathsf{A}) \ar[r,"\phi_{\perf(\mathsf{A})}"] & \semifree(\perf(\mathsf{A}))\ar[r,"\semifree(\mathsf{f})"] & \semifree(\mathsf{C}') & \mathsf{C},\ar[l,"\phi_{\mathsf{C}'}" above]
                \end{tikzcd}    
                \]
                where $\phi_{\perf(\mathsf{A})}$ and $\phi_{\mathsf{C}'}$ are quasi-functors described in \cite[Section 1]{luntsorlov}. Then $\mathsf{h}$ is a quasi-equivalence as explained in the proof of item 2 of Proposition \ref{prop:uniquetrdc}. 

                We are reduced to check that $\eqref{eq:sf2}$ is satisfied. In \cite{luntsorlov}, $\phi_{\perf(\mathsf{A})}$ and $\phi_{\mathsf{C}'}$
                are obtained by a Yoneda embedding and a restriction functor, both of which do not affect the subcategory associated ($\perf(\mathsf{A})$ and $\mathsf{C}'$ respectively). Therefore, since $\semifree(\mathsf{f})$ is an extension of $\mathsf{f}$, $\mathsf{h}$ fulfils \eqref{eq:sf2}.
            \end{proof}
We can prove the converse of Proposition \ref{prop:usf} in a special case.
\begin{prop}\label{prop:eeusf}
    An unbounded triangulated formal DG-category $\mathsf{A}$ is also triangulated formal if the following holds:
    \begin{description}
        \item[EE] For any enhancement $(\mathsf{C},E)$ of $\tr(\mathsf{A})$, $E$ extends to a triangulated equivalence $E': \derdg(\mathsf{C}) \to \derdg(\mathsf{A})$ up to natural isomorphism, i.e. $E'_{\mid H^0(\mathsf{C})}\cong E$.
    \end{description}
    \emph{(EE stands for Extending Enhancements).}
\end{prop}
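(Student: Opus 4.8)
The plan is to use hypothesis EE to turn an enhancement of $\tr(\mathsf{A})$ into an enhancement of $\derdg(\mathsf{A})$, apply unbounded triangulated formality there, and then restrict the resulting quasi-functor to the ``bounded'' parts. So let $(\mathsf{C},E)$ be an enhancement of $\tr(\mathsf{A})$, which we may assume to be strongly pretriangulated. By EE, $E$ extends to a triangulated equivalence $E':\derdg(\mathsf{C})=H^0(\semifree(\mathsf{C}))\to\derdg(\mathsf{A})$ with $E'_{\mid H^0(\mathsf{C})}\cong E$; since $\semifree(\mathsf{C})$ is strongly pretriangulated, $(\semifree(\mathsf{C}),E')$ is an enhancement of $\derdg(\mathsf{A})$, and uTF provides a quasi-equivalence quasi-functor $\mathsf{g}:\semifree(\mathsf{A})\to\semifree(\mathsf{C})$ with $E'H^0(\mathsf{g})(X)\cong X$ for every $X\in H^0(\mathsf{A})$.

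Next I would cut $\mathsf{g}$ down. The hypothesis $E'_{\mid H^0(\mathsf{C})}\cong E$, together with $H^0(\mathsf{C})\cong\tr(\mathsf{C})$, shows that $E'$ restricts to an equivalence $\tr(\mathsf{C})\to\tr(\mathsf{A})$; as $H^0(\mathsf{A})$ generates $\tr(\mathsf{A})$, the preimage $(E')^{-1}(H^0(\mathsf{A}))$ generates $\tr(\mathsf{C})$. Hence the isomorphisms $H^0(\mathsf{g})(X)\cong (E')^{-1}(X)\in\tr(\mathsf{C})$ for $X\in H^0(\mathsf{A})$, combined with the fact that $H^0(\mathsf{g})$ is a fully faithful triangulated functor, force $H^0(\mathsf{g})$ to restrict to an equivalence $\tr(\mathsf{A})\to\tr(\mathsf{C})$ --- this is the argument behind Lemma \ref{lem:dgtaylor} and Remark \ref{rem:dgtaylor}. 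With this in hand, one can restrict $\mathsf{g}$ exactly as quasi-functors are restricted in the proof of Proposition \ref{prop:dcatra} (via Notation \ref{nota:restrictdg} and Lemma \ref{lem:restrictdg}) to obtain a quasi-equivalence quasi-functor
\[
\mathsf{g}':\semifree(\mathsf{A})_{\mid\tr(\mathsf{A})}\longrightarrow\semifree(\mathsf{C})_{\mid\tr(\mathsf{A})}^{E'}
\]
that still fixes every object of $H^0(\mathsf{A})$ up to isomorphism.

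To conclude, observe that $\mathsf{A}^{\pretr}\hookrightarrow\semifree(\mathsf{A})_{\mid\tr(\mathsf{A})}$ and $\mathsf{C}^{\pretr}\hookrightarrow\semifree(\mathsf{C})_{\mid\tr(\mathsf{A})}^{E'}$ are quasi-equivalences (each is a full DG-subcategory whose homotopy category has, respectively, $\tr(\mathsf{A})$ and $\tr(\mathsf{C})$ as essential image), and that $\mathsf{y}:\mathsf{C}\to\mathsf{C}^{\pretr}$ is a quasi-equivalence because $\mathsf{C}$ is pretriangulated. Composing $\mathsf{g}'$ with these quasi-equivalences and with quasi-inverses of the latter two yields a quasi-equivalence quasi-functor $\mathsf{f}:\mathsf{A}^{\pretr}\to\mathsf{C}$; since $E'$ extends $E$, the relation $E'H^0(\mathsf{g}')(X)\cong X$ on $H^0(\mathsf{A})$ translates into $EH^0(\mathsf{f})(X)\cong X$, that is, \eqref{eq:sf2}. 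Thus $\mathsf{A}$ is triangulated formal.

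The step I expect to be the main obstacle is the restriction of $\mathsf{g}$: making it rigorous that the quasi-functor obtained from uTF descends to a quasi-equivalence between the subcategories of ``bounded'' objects while keeping the object-fixing property. The key point there is that $H^0(\mathsf{g})$ carries the triangulated subcategory generated by $H^0(\mathsf{A})$ isomorphically onto $\tr(\mathsf{C})$, and this is exactly where hypothesis EE enters essentially --- it is what guarantees that $\tr(\mathsf{C})$ is the $E'$-preimage of $\tr(\mathsf{A})$, hence that the bounded parts on the two sides are matched.
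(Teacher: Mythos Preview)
Your proposal is correct and follows essentially the same strategy as the paper's proof: extend the enhancement via EE, apply uTF to get a quasi-equivalence $\semifree(\mathsf{A})\to\semifree(\mathsf{C})$, then restrict to the bounded parts using that $E'$ identifies $\tr(\mathsf{C})$ with the $E'$-preimage of $\tr(\mathsf{A})$. The only cosmetic difference is that the paper restricts the quasi-functor directly to $\mathsf{A}^{\pretr}$ and lands in the homotopy closure $\bar{\mathsf{C}}$ of $\mathsf{C}$ inside $\semifree(\mathsf{C})$, whereas you pass through the intermediate categories $\semifree(\mathsf{A})_{\mid\tr(\mathsf{A})}$ and $\semifree(\mathsf{C})_{\mid\tr(\mathsf{A})}^{E'}$ before identifying them with $\mathsf{A}^{\pretr}$ and $\mathsf{C}$; since $\semifree(\mathsf{C})_{\mid\tr(\mathsf{A})}^{E'}=\bar{\mathsf{C}}$, the two arguments coincide.
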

\begin{proof}
    Let $(\mathsf{C},E)$ be an enhancement of $\tr(\mathsf{A})$. By assumption, we can consider the enhancement $(\semifree(\mathsf{C}),E')$ of $\derdg(\mathsf{A})$ such that $E'_{\mid H^0(\mathsf{C})}\cong E$. 
    Since $\mathsf{A}$ is unbounded triangulated formal, there exists $\mathsf{f}: \semifree(\mathsf{A}) \to \semifree(\mathsf{C})$ such that $E'H^0(\mathsf{f})(X) \cong X$ for all $X \in H^0(\mathsf{A})$.  

    We now want to show that $\essim(\mathsf{f}_{\mid \mathsf{A}^{\pretr}})$ is contained in $\bar{\mathsf{C}}$, the homotopy closure of $\mathsf{C}$ in $\semifree(\mathsf{C})$. Let $Y \in \essim(\mathsf{f}_{\mid \mathsf{A}})$. Then there exists $X \in H^0(\mathsf{A})$ such that $Y \cong H^0(\mathsf{f})(X)$. By applying $E'$, we have $E'Y \cong E' H^0(\mathsf{f})(X) \cong X$. Since $E'_{\mid H^0(\mathsf{C})} \cong E$, we can choose $Y' \in H^0(\mathsf{C})$ such that $E'Y' \cong X \cong E'Y$. In particular, $Y$ is homotopy equivalent to $Y'$. From this, we have $\mathsf{f}_{\mid \mathsf{A}}: \mathsf{A} \to \bar{\mathsf{C}}$. Being $H^0(\bar{\mathsf{C}})$ triangulated,
    $\essim(\mathsf{f}_{\mid \mathsf{A}^{\pretr}})\subseteq \bar{\mathsf{C}}$, as wanted.
    
    Consider the quasi-functor $\mathsf{g}: \mathsf{A}^{\pretr} \to \bar{\mathsf{C}} \hookleftarrow \mathsf{C}$, where the first map is the quasi-functor $\mathsf{f}_{\mid \mathsf{A}^{\pretr}}$ and the second is a Yoneda embedding. By Remark \ref{rem:dgtaylor}, we conclude that $\mathsf{g}$ is a quasi-equivalence, so $\mathsf{A}$ is triangulated formal.
\end{proof}
\begin{rem}\label{rem:ssee}
    Assume $\mathsf{A}$ is a DG-category for which EE holds and $\derdg(\mathsf{A})$ has a semi-strongly unique enhancement. Then $\perf(\mathsf{A})$ is unbounded triangulated formal because $\semifree(\perf (\mathsf{A}))\cong \semifree(\mathsf{A})$ by \cite[Proposition 1.17]{luntsorlov}. Since $\mathsf{A}$ satisfies EE, from \cite[Theorem 1.5]{balmerschlichting} one can prove that $\perf(\mathsf{A})$ also satisfies EE. By Proposition \ref{prop:eeusf}, $\perf(\mathsf{A})$ is triangulated formal, meaning that $\dc{\mathsf{A}}$ has a semi-strongly unique enhancement by Remark \ref{rem:sfunique}. 
\end{rem}

\begin{figr}\label{fig:sf}
Relation between  triangulated formality and uniqueness of enhancements for a DG-category $\mathsf{A}$.
    \begin{center}
    \begin{tikzcd}[column sep=10ex]
    \begin{tabular}{p{0.3\textwidth}}$\dc{\mathsf{A}}${ has a semi-strongly unique enhancement}\end{tabular}\ar[d,Rightarrow,"\text{Remark }\ref{rem:dcatrasemi}"] & \\
    \begin{tabular}{p{0.3\textwidth}}$\tr(\mathsf{A})${ has a semi-strongly unique enhancement}\end{tabular}\ar[d,Rightarrow,"\text{Remark }\ref{rem:sfunique}" left] & \begin{tabular}{p{0.3\textwidth}}$\derdg(\mathsf{A})${ has a semi-strongly unique enhancement}\end{tabular}\ar[d,Rightarrow,"\text{Remark \ref{rem:sfunique}}"]\ar[lu,bend right=10,dashed, "\text{Remark \ref{rem:ssee} +}\textbf{EE}" above right]\\
    \mathsf{A} \text{ is triangulated formal}\ar[r,Rightarrow,"\text{Proposition \ref{prop:usf}}"] \ar[d,Rightarrow,"\text{Remark \ref{rem:sfunique}}"]\ar[u,dashed,bend right=30, "\text{Remark \ref{rem:sfunique} + }\mathsf{A}=\mathsf{A}^{\pretr}" right] & \mathsf{A} \text{ is unbounded triangulated formal}\ar[d,Rightarrow,"\text{Remark \ref{rem:sfunique}}"]\ar[l,bend left=15, dashed, "\text{Proposition \ref{prop:eeusf} +} \textbf{EE}", start anchor= south west, end anchor= south east]\\
    \tr(\mathsf{A})\text{ has a unique enhancement}\ar[d, Rightarrow, "\text{Proposition \ref{prop:uniquetrdc}}" left] & \derdg(\mathsf{A})\text{ has a unique enhancement}\\
    \dc{\mathsf{A}}\text{ has a unique enhancement} \ar[ru, Rightarrow, "\text{Proposition \ref{prop:uniquetrdc}}" below right]&
    \end{tikzcd}
    \end{center}
\end{figr}

\section{Formal standardness}\label{sec:fs}

\begin{prop}\label{prop:natfgrad}
    Let $\mathcal{T}$ be a triangulated category and consider $\mathcal{S}\subset \mathcal{T}$ a full subcategory.
    Then any  triangulated equivalence $F: \mathcal{T} \to \mathcal{T}$ such that $FX \cong X$ for $X \in \mathcal{S}$ is naturally isomorphic to a triangulated equivalence $G$ such that $GX=X$ for $X \in \mathcal{S}$.
\end{prop}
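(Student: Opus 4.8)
The plan is to fix a choice of object for each isomorphism class in $\mathcal{S}$ and then surgically modify $F$ only on those objects, leaving everything else untouched. Concretely, for each $X \in \mathcal{S}$ the hypothesis gives us an isomorphism $\varphi_X \colon FX \xrightarrow{\sim} X$; I would first want these isomorphisms to be coherent, i.e. $\varphi_X = \id_X$ whenever $FX$ already equals $X$ on the nose, which we can arrange after possibly re-choosing. Then I would define a new assignment $G$ that agrees with $F$ on all objects not in $\mathcal{S}$ (more precisely, not isomorphic to a chosen representative) and sends each chosen $X \in \mathcal{S}$ to $X$ itself; on morphisms $f \colon A \to B$ one sets $Gf := \psi_B \circ Ff \circ \psi_A^{-1}$, where $\psi_A = \varphi_A$ if $A$ is one of the modified objects and $\psi_A = \id$ otherwise. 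This is the standard "transport of structure along an isomorphism" trick: $G$ is then a functor, the family $(\psi_A)_A$ is by construction a natural isomorphism $F \Rightarrow G$, and $G$ is an equivalence because $F$ is.

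The one genuinely delicate point is that $G$ must be a \emph{triangulated} equivalence, not merely an additive one. So after defining $G$ as above I would equip it with a natural isomorphism $G \circ [1] \xrightarrow{\sim} [1] \circ G$: transporting along $(\psi_A)$ turns the given such isomorphism for $F$ into one for $G$, and one checks that the octahedron/triangle axiom compatibility is preserved because $\psi$ is a natural transformation between triangulated functors. Equivalently, one invokes that $F$ being naturally isomorphic to $G$ as additive functors, together with $F$ carrying a triangulated structure, forces $G$ to inherit a triangulated structure making the natural isomorphism an isomorphism of triangulated functors — this is a routine transport argument, but it should be spelled out since "triangulated functor" carries the extra datum of the commutation with shift.

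The main obstacle, and the only place requiring care, is the coherence of the chosen isomorphisms $\varphi_X$ on the overlap where distinct objects of $\mathcal{S}$ happen to be related by identities or where $FX = X$ literally. If $\mathcal{S}$ is an arbitrary full subcategory it may contain many isomorphic objects, and naively setting $GX = X$ for \emph{every} $X \in \mathcal{S}$ while $Gf = \varphi_B Ff \varphi_A^{-1}$ could clash with functoriality unless the $\varphi$'s are chosen compatibly with all isomorphisms inside $\mathcal{S}$ — but in fact there is no obstruction: one simply needs, for any isomorphism $u \colon X \to X'$ in $\mathcal{S}$, that $\varphi_{X'} \circ Fu = u \circ \varphi_X$, and this can be achieved by choosing one $\varphi$ per isomorphism class and propagating it along a chosen isomorphism to each object in the class. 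After that choice the verification that $G$ is a well-defined functor fixing $\mathcal{S}$ pointwise, with $(\psi_A)$ a natural isomorphism $F \Rightarrow G$, is purely formal. I would therefore structure the proof as: (1) choose coherent $\varphi_X$; (2) define $G$ and $\psi$; (3) check $G$ is an additive equivalence with $\psi \colon F \Rightarrow G$; (4) transport the triangulated structure and conclude.
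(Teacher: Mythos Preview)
Your approach is essentially the paper's: pick an isomorphism $\varphi_X\colon FX\to X$ for each $X\in\mathcal{S}$, set $\psi_X=\varphi_X$ there and $\psi_X=\id_{FX}$ elsewhere, and define $G$ on morphisms by $Gf=\psi_B\,Ff\,\psi_A^{-1}$; the paper does exactly this in two lines (with $\eta_X=\psi_X^{-1}$). Your remark on transporting the triangulated structure along $\psi$ is correct and routine.

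Where you go astray is the ``coherence'' paragraph. Functoriality of $G$ is automatic for \emph{any} independent choice of the $\varphi_X$: from $G(gf)=\psi_C F(gf)\psi_A^{-1}=\psi_C Fg\,\psi_B^{-1}\psi_B\,Ff\,\psi_A^{-1}=Gg\circ Gf$ there is nothing to check, and the proposition only asks that $GX=X$ on \emph{objects} of $\mathcal{S}$, not that $G|_{\mathcal{S}}$ be the identity functor. Worse, the coherence you try to impose, namely $\varphi_{X'}\circ Fu=u\circ\varphi_X$ for every isomorphism $u\colon X\to X'$ in $\mathcal{S}$, is in general \emph{impossible}: taking $X=X'$ and $u$ an arbitrary automorphism forces $Fu=\varphi_X^{-1}u\,\varphi_X$, i.e.\ $F$ acts on $\operatorname{Aut}(X)$ by a fixed inner automorphism, which there is no reason to expect. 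Your ``propagate along a chosen isomorphism'' recipe does not fix this, since the same computation reduces to the automorphism case at the chosen representative. So step~(1) of your plan should simply be deleted; steps~(2)--(4) already constitute the proof.
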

\begin{proof}
    Let us consider the family of isomorphisms $\eta:= (\eta_X)$, where $\eta_X : X \to FX$ is an isomorphism if $X \in \mathcal{S}$, while $\eta_X=\id$ if $X \notin \mathcal{S}$.\footnote{This definition explicitly requires the axiom of choice, since we choose an isomorphism for every object in $\mathcal{S}$.} Then $G_{X,Y}:= \eta_Y^{-1} F_{X,Y} \eta_X$ describes the wanted triangulated equivalence $G$, and $\eta$ becomes a natural isomorphism $G \to F$.
\end{proof}
            \begin{defn}\label{def:fgrad}
            Let $\mathsf{A}$ be a DG-category and consider a triangulated autoequivalence $(F,\eta)$ on $\tr(\mathsf{A})$ 
            such that 
            \begin{equation}\label{eq:fgrad}
                FX=X \quad \text{for any }X \in H^0(\mathsf{A}).      
            \end{equation}
            Its \emph{graded restriction} is a graded functor $F_{\mid H^*(\mathsf{A})}^{gr}:H^*(\mathsf{A}) \to H^*(\mathsf{A})$  defined by $F_{\mid H^*(\mathsf{A})}^{gr}(X)=X$ and 
            \[
                \begin{tikzcd}[column sep=large]
                    \hom_{H^*(\mathsf{A})}(X,Y) \ar[d,"\cong"]\ar[rr,"({F_{\mid H^*(\mathsf{A})}^{gr}})_{X,Y}"]&&\hom_{H^*(\mathsf{A})}(X,Y) \ar[d,"\cong"]\\
                    \displaystyle\bigoplus_{i} \hom_{\tr(\mathsf{A})}(X,Y[i]) \ar[r,"\bigoplus_i F_{X, Y[i]}"]& \displaystyle\bigoplus_{i} \hom_{\tr(\mathsf{A})}(FX,F(Y[i])) \arrow{r}{\bigoplus_i \eta_{ Y}^i} & \displaystyle\bigoplus_{i} \hom_{\tr(\mathsf{A})}(X,Y[i])
                \end{tikzcd}
            \]
            for any $X,Y \in \mathsf{A}$, where the vertical arrows are obtained via the following isomorphisms\footnote{The first and the last isomorphisms are given by definition, while the second one is obtained by iterated composition of the closed morphisms associated to the suspensions.} \[
                \hom_{H^*(\mathsf{A})}(X,Y)\cong \bigoplus_i H^i(\hom_{\mathsf{A}}(X,Y)) \cong \bigoplus_i H^0 (\hom_{\mathsf{A}}(X,Y[i]))
            \cong \bigoplus_{i} \hom_{H^0(\mathsf{A})}(X,Y[i]). \]
            \end{defn}
            \begin{defn}\label{def:formstand}
            A DG-category $\mathsf{A}$ is 
            \begin{itemize}
                \item \emph{formally standard}  
                if, given two triangulated equivalences $F,G : \tr(\mathsf{A}) \to \tr(\mathsf{A})$ 
                satisfying \eqref{eq:fgrad} and $F_{\mid H^*(\mathsf{A})}^{gr} \cong G_{\mid H^*(\mathsf{A})}^{gr}$, there is a natural isomorphism $F \cong G$.
                \item \emph{lifted} 
                if for every triangulated equivalence $F:\tr(\mathsf{A}) \to \tr(\mathsf{A})$ 
                for which \eqref{eq:fgrad} holds, we have a quasi-functor $\mathsf{f}:\mathsf{A} \to \mathsf{A}$ such that $H^*(\mathsf{f}) \cong F_{\mid H^*(\mathsf{A})}^{gr}$. 
            \end{itemize}
            \end{defn}
            The notion of lifted is introduced to treat at the same time the two following examples, which are crucial for applications.

            
\begin{es}\label{es:lifted}\item 
    \begin{itemize}
        \item Every graded category $\mathsf{B}$ is lifted: 
        indeed, given a triangulated equivalence $F: \tr(\mathsf{B})\to \tr(\mathsf{B})$, 
        the quasi-functor $\mathsf{f}$ required is simply given by the graded restriction $F^{gr}_{\mid \mathsf{B}}$. 
\item    Let $\mathcal{E}$ be an exact category and consider $\mathcal{E}_{\dg} := \derb_{\dg}(\mathcal{E})_{\mid \mathcal{E}}$ as in Corollary \ref{cor:dbunique}. We claim that $\mathcal{E}_{\dg}$ is lifted: indeed, for any triangulated equivalence $F:\tr(\mathcal{E}_{\dg})\to \tr(\mathcal{E}_{\dg})$ satisfying \eqref{eq:fgrad}, since $\tr(\mathcal{E}_{\dg}) \cong \derb(\mathcal{E})$, we obtain an exact equivalence ${\mathcal{E}} \to {\mathcal{E}}$ (see \cite[Proposition A.7]{positselski}). This induces an equivalence $\comdgb{\mathcal{E}} \to \comdgb{\mathcal{E}}$; via quotient we get a quasi-functor $\derb_{\dg}(\mathcal{E}) \to \derb_{\dg}(\mathcal{E})$, and finally a quasi-functor $\mathsf{f}:\mathcal{E}_{\dg}\to \mathcal{E}_{\dg}$. Moreover, the exact equivalence $\mathcal{E}\to \mathcal{E}$ uniquely determines what happens on $H^*(\mathcal{E}_{\dg})$; this is Proposition \ref{prop:0detgr}. 
We conclude that $H^*(\mathsf{f})\cong F^{gr}_{\mid H^*(\mathcal{E}_{\dg})}$. 
    \end{itemize}
\end{es}

            \begin{rem}\label{rem:gradrestid}
                A DG-category $\mathsf{A}$ is formally standard 
                if and only if any triangulated equivalence $F: \tr(\mathsf{A})\to \tr(\mathsf{A})$, 
                 such that \eqref{eq:fgrad} holds and $F_{\mid H^*(\mathsf{A})}^{gr}\cong \id $, is naturally isomorphic to the identity.
            \end{rem}
            \begin{rem}\label{rem:performstand}
                Replacing $\tr(\mathsf{A})$ with $\dc{\mathsf{A}}$ in Definition \ref{def:formstand}, one could be tempted to define \emph{perfect formally standard} and \emph{perfect lifted} DG-categories.
                However, this is not useful, since the equivalences $F:\dc{\mathsf{A}}\to \dc{\mathsf{A}}$ satisfying \eqref{eq:fgrad} are determined by $F_{\mid \tr(\mathsf{A})}$ up to natural isomorphism by \cite[Theorem 1.5]{balmerschlichting}, and Remark \ref{rem:dgtaylor} shows that $\tr(\mathsf{A})\subset \essim(F_{\mid \tr(\mathsf{A})})$, so that $F_{\mid \tr(\mathsf{A})}$ can be thought of as an equivalence $\tr(\mathsf{A})\to \tr(\mathsf{A})$. In other words, the only equivalences admitting a graded restriction on $\dc{\mathsf{A}}$ are equivalences restricting to $\tr(\mathsf{A})$.
            \end{rem}
            \section{Main result}\label{sec:maintheorem}
            In this section, we show how the new notions introduced so far (triangulated formality, formal standardness and liftedness) are connected to strong uniqueness of enhancements. In the case of graded categories $\mathsf{B}$, we are able to provide a characterization of strong uniqueness of enhancements for $\tr(\mathsf{B})$ and $\dc{\mathsf{B}}$ (see Theorem \ref{thm:main}). 
            
            Let us start with a sufficient condition for strong uniqueness of enhancements.
            \begin{prop}\label{prop:sffssue}
            Let $\mathsf{A}$ be a lifted, triangulated formal and formally standard DG-category. Then $\tr(\mathsf{A})$ has a strongly unique enhancement.
            \end{prop}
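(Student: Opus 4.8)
The plan is to combine triangulated formality with Proposition \ref{prop:strongenh} by lifting every triangulated autoequivalence $F:\tr(\mathsf{A})\to\tr(\mathsf{A})$. So I start by fixing an enhancement $(\mathsf{C},E)$ of $\tr(\mathsf{A})$; since triangulated formality implies uniqueness of the enhancement (Remark \ref{rem:sfunique}), by Proposition \ref{prop:strongenh} it suffices to produce a $(\mathsf{C},E)$-lift of an arbitrary triangulated autoequivalence $F$. First I would reduce to the case $\mathsf{C}=\mathsf{A}^{\pretr}$ and $E=\id$: applying triangulated formality to the enhancement $(\mathsf{A}^{\pretr},\id)$ of $\tr(\mathsf{A})$ gives a quasi-equivalence, and composing with $F$ we get a new enhancement $(\mathsf{A}^{\pretr},F)$; by \textbf{TF} applied to this enhancement there is a quasi-equivalence quasi-functor $\mathsf{f}:\mathsf{A}^{\pretr}\to\mathsf{A}^{\pretr}$ with $FH^0(\mathsf{f})(X)\cong X$ for all $X\in H^0(\mathsf{A})$. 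Setting $G:=FH^0(\mathsf{f})$, we have a triangulated autoequivalence of $\tr(\mathsf{A})$ that is the identity on objects of $H^0(\mathsf{A})$ up to isomorphism; by Proposition \ref{prop:natfgrad} we may assume $G X=X$ on the nose for $X\in H^0(\mathsf{A})$, so $G$ has a graded restriction $G^{gr}_{\mid H^*(\mathsf{A})}$ in the sense of Definition \ref{def:fgrad}.

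Next I use liftedness: since $\mathsf{A}$ is lifted, there is a quasi-functor $\mathsf{g}_0:\mathsf{A}\to\mathsf{A}$ with $H^*(\mathsf{g}_0)\cong G^{gr}_{\mid H^*(\mathsf{A})}$. Because $G^{gr}$ is an equivalence (it is the graded restriction of an equivalence), $H^0(\mathsf{g}_0)$ is an equivalence $H^0(\mathsf{A})\to H^0(\mathsf{A})$, hence $\mathsf{g}_0$ is a quasi-equivalence; it extends to a quasi-equivalence quasi-functor $\mathsf{g}:=\mathsf{g}_0^{\pretr}:\mathsf{A}^{\pretr}\to\mathsf{A}^{\pretr}$ by Remark \ref{rem:h0qespretr}. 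Now $H^0(\mathsf{g}):\tr(\mathsf{A})\to\tr(\mathsf{A})$ is a triangulated autoequivalence; I claim it satisfies \eqref{eq:fgrad} and has the same graded restriction as $G$. Indeed on objects of $H^0(\mathsf{A})$ we may normalize (again via Proposition \ref{prop:natfgrad}) so that $H^0(\mathsf{g})X=X$, and since $H^*(\mathsf{g})=H^*(\mathsf{g}_0^{\pretr})$ is determined by $H^*(\mathsf{g}_0)\cong G^{gr}_{\mid H^*(\mathsf{A})}$, the graded restrictions agree: $H^0(\mathsf{g})^{gr}_{\mid H^*(\mathsf{A})}\cong G^{gr}_{\mid H^*(\mathsf{A})}$. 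Here I expect to need a small compatibility check — that taking graded restriction of $H^0$ of a pretriangulated extension recovers $H^*$ on the original graded hom-complexes — which follows from the construction in Definition \ref{def:fgrad} together with the fact that suspension morphisms are preserved by DG-functors.

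Now formal standardness closes the argument: $G$ and $H^0(\mathsf{g})$ are both triangulated autoequivalences of $\tr(\mathsf{A})$ satisfying \eqref{eq:fgrad} with $G^{gr}_{\mid H^*(\mathsf{A})}\cong H^0(\mathsf{g})^{gr}_{\mid H^*(\mathsf{A})}$, so by Definition \ref{def:formstand} there is a natural isomorphism $G\cong H^0(\mathsf{g})$. Therefore $FH^0(\mathsf{f})\cong H^0(\mathsf{g})$, i.e. $F\cong H^0(\mathsf{g})H^0(\mathsf{f})^{-1}=H^0(\mathsf{g}\mathsf{f}^{-1})$, where $\mathsf{f}$ is invertible as a quasi-functor because it is a quasi-equivalence. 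Thus $\mathsf{g}\mathsf{f}^{-1}:\mathsf{A}^{\pretr}\to\mathsf{A}^{\pretr}$ is a $(\mathsf{A}^{\pretr},\id)$-lift of $F$; transporting back along the chosen quasi-equivalence $(\mathsf{A}^{\pretr},\id)\simeq(\mathsf{C},E)$ produces a $(\mathsf{C},E)$-lift, and Proposition \ref{prop:strongenh}(3)$\Rightarrow$(1) gives strong uniqueness. The main obstacle I anticipate is the bookkeeping in the previous paragraph: making precise that the graded restriction of $H^0$ of a quasi-functor lifting a given graded functor really is that graded functor (up to isomorphism), which is where the hypotheses "lifted" and the definition of graded restriction have to be meshed carefully; everything else is a formal manipulation of the lifting framework already set up in Sections \ref{sec:dgenh}–\ref{sec:fs}.
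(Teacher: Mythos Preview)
Your proposal is correct and follows essentially the same approach as the paper: apply triangulated formality to the enhancement $(\mathsf{A}^{\pretr},F)$ to obtain $\mathsf{f}$, normalize $G=FH^0(\mathsf{f})$ via Proposition~\ref{prop:natfgrad}, use liftedness to produce $\mathsf{g}$ with $H^*(\mathsf{g})\cong G^{gr}_{\mid H^*(\mathsf{A})}$, verify that $H^0(\mathsf{g}^{\pretr})$ has the same graded restriction as $G$, and conclude $G\cong H^0(\mathsf{g}^{\pretr})$ by formal standardness before invoking Proposition~\ref{prop:strongenh}. The compatibility check you flag is exactly what the paper dismisses as ``direct computations,'' and your initial detour through a general $(\mathsf{C},E)$ is harmless but unnecessary since Proposition~\ref{prop:strongenh}(3) only requires lifts for a single enhancement.
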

            \begin{proof}
                Let $F:\tr (\mathsf{A}) \to \tr(\mathsf{A})$ be any triangulated equivalence. Let us consider the enhancement $(\mathsf{A}^{\pretr}, F)$ of $\tr(\mathsf{A})$. By triangulated formality, there exists $\mathsf{f}:\mathsf{A}^{\pretr} \to \mathsf{A}^{\pretr}$ such that $FH^0(\mathsf{f})(X)\cong X$ for all $X \in H^0(\mathsf{A})$.
                Up to natural isomorphism, we can assume that $G:=FH^0(\mathsf{f})$ satisfies \eqref{eq:fgrad} by Proposition \ref{prop:natfgrad}.
We aim to prove that $G$ has an $(\mathsf{A}^{\pretr},\id)$-lift. This suffices to conclude that $F$ has an $(\mathsf{A}^{\pretr},\id)$-lift as well. 
Since $\mathsf{A}$ is lifted, we have a quasi-functor $\mathsf{g}:\mathsf{A}\to \mathsf{A}$ such that $H^*(\mathsf{g})\cong G_{\mid H^*(\mathsf{A})}^{gr}$. Up to natural isomorphism, notice that $H^0(\mathsf{g}^{\pretr})$ satisfies \eqref{eq:fgrad}. It remains to prove that $G_{\mid H^0(\mathsf{A})}^{gr} \cong H^0(\mathsf{g}^{\pretr})_{\mid H^*(\mathsf{A})}^{gr} $, which follows from direct computations. Formal standardness implies that $G\cong H^0(\mathsf{g}^{\pretr})$.
            
            Since any triangulated equivalence $F:\tr(\mathsf{A})\to \tr(\mathsf{A})$ has an $(\mathsf{A}^{\pretr},\id)$-lift, Remark \ref{rem:sfunique} and Proposition \ref{prop:strongenh} show that $\tr(\mathsf{A})$ has a strongly unique enhancement.
            \end{proof}
            \begin{rem}\label{rem:perfsffssue}
                By Proposition \ref{prop:perfectsf2} and Remark \ref{rem:performstand}, we can modify the proof of Proposition \ref{prop:sffssue} to prove that $\dc{\mathsf{A}}$ has a strongly unique enhancement under the same requirements.
            \end{rem}
            
            We now aim to prove the converse implication of Proposition \ref{prop:sffssue}. In order to do so, we need to restrict to graded categories and state a technical result (Lemma \ref{lem:cofpres}). First, we recall the following.
            \begin{fact}\emph{\cite[Theorem 1.2.10]{hovey} and \cite[Théorème 2.1, Remarque 1]{tabuada}.}\label{fact:cofibrant}
                Every DG-category is quasi-equivalent to a cofibrant\footnote{The reader not familiar with this notion may refer to \cite[Section 2]{toen}; however, we are interested only in the property highlighted by the statement.} DG-category. Moreover, let $\mathsf{C}$ be a cofibrant DG-category. Every quasi-functor $\mathsf{f}: \mathsf{C}\to \mathsf{D}$ can be represented by a DG-functor $\mathsf{f}': \mathsf{C}\to \mathsf{D}$. In particular, this means that $H^0(\mathsf{f}) \cong H^0(\mathsf{f}')$.
            \end{fact}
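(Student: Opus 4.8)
The plan is to obtain both statements from the Quillen model structure on the category $\mathrm{dgcat}$ of (small, $\ring$-linear) DG-categories whose weak equivalences are the quasi-equivalences, established in \cite[Th\'eor\`eme~2.1]{tabuada}, together with the general formalism of \cite{hovey}. The one structural feature that makes the argument run smoothly is that in this model structure \emph{every} DG-category is fibrant (this is the content of \cite[Remarque~1]{tabuada}); cofibrant DG-categories, by contrast, form a genuinely restricted class.

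For the first assertion I would invoke the factorisation axiom for the canonical morphism $\emptyset \to \mathsf{A}$ from the initial DG-category, writing it as $\emptyset \hookrightarrow Q\mathsf{A} \xrightarrow{\simeq} \mathsf{A}$ with $\emptyset \hookrightarrow Q\mathsf{A}$ a cofibration (so that $Q\mathsf{A}$ is cofibrant) and $Q\mathsf{A} \to \mathsf{A}$ a trivial fibration, in particular a quasi-equivalence. This cofibrant replacement exhibits $\mathsf{A}$ as quasi-equivalent to the cofibrant DG-category $Q\mathsf{A}$.

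For the second assertion, recall from Definition \ref{def:qfunctor} that a quasi-functor $\mathsf{f}:\mathsf{C}\to\mathsf{D}$ is by definition a morphism $\mathsf{C}\to\mathsf{D}$ in the homotopy category $\mathrm{Ho}(\mathrm{dgcat})$, i.e. lies in the image of the localisation functor $\gamma:\mathrm{dgcat}\to\mathrm{Ho}(\mathrm{dgcat})$ on hom-sets. Since $\mathsf{C}$ is cofibrant and $\mathsf{D}$ is fibrant, \cite[Theorem~1.2.10]{hovey} identifies $\mathrm{Ho}(\mathrm{dgcat})(\mathsf{C},\mathsf{D})$ with the set of DG-functors $\mathsf{C}\to\mathsf{D}$ modulo the homotopy relation; in particular $\gamma$ is surjective there, so there is a DG-functor $\mathsf{f}':\mathsf{C}\to\mathsf{D}$ with $\gamma(\mathsf{f}')=\mathsf{f}$, which is exactly what it means for $\mathsf{f}'$ to represent the quasi-functor $\mathsf{f}$. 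Finally, $H^0$ sends quasi-equivalent DG-categories to equivalent categories and homotopic DG-functors to naturally isomorphic functors, so the functor $H^0(\mathsf{f})$ attached to a quasi-functor $\mathsf{f}$ is well-defined up to natural isomorphism and depends only on $\gamma(\mathsf{f})$; since $\gamma(\mathsf{f}')=\mathsf{f}$, this yields $H^0(\mathsf{f})\cong H^0(\mathsf{f}')$.

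The only step calling for genuine care is the fibrancy of every DG-category: without it one would be forced to replace $\mathsf{D}$ by a fibrant model $R\mathsf{D}$ before applying \cite[Theorem~1.2.10]{hovey}, and the representative so produced would be a DG-functor $\mathsf{C}\to R\mathsf{D}$ rather than $\mathsf{C}\to\mathsf{D}$. Everything else is a direct appeal to standard model-categorical results, which is why the statement is recorded here as a Fact.
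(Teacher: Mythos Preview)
The paper does not prove this statement at all: it is recorded as a \emph{Fact} with references to \cite{hovey} and \cite{tabuada}, and no argument is supplied. Your proposal is therefore not to be compared against a proof in the paper but rather assessed as a correct unpacking of what those citations provide---and it is. You correctly identify the three ingredients the paper is implicitly invoking: Tabuada's model structure on $\mathrm{dgcat}$ with quasi-equivalences as weak equivalences, the fact (Tabuada's Remarque~1) that every object is fibrant in this structure, and Hovey's description of hom-sets in the homotopy category between a cofibrant source and a fibrant target. Your observation that fibrancy of $\mathsf{D}$ is the key point---without it the representative would land in a fibrant replacement rather than in $\mathsf{D}$ itself---is exactly the reason the statement is clean. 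Nothing is missing; you have simply filled in what the paper leaves to the reader.
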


            \begin{lem}[Presentation via a cofibrant DG-category]\label{lem:cofpres}
                    Let $\mathsf{A}$ be a DG-category and let $\mathsf{C}$ be a cofibrant DG-category with a quasi-equivalence quasi-functor $\mathsf{f}:\mathsf{C}\to \mathsf{A}^{\pretr}$. Then we can construct a quasi-equivalence $\mathsf{h}: \mathsf{D}\to \mathsf{A}$, where $\mathsf{D}:= \mathsf{C}_{\mid H^0(\mathsf{A})}^{H^0(\mathsf{f})}$ (recall Notation \ref{nota:restrictdg}).
    \end{lem}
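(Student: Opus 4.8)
The plan is to realize $\mathsf{D}$ as a cofibrant DG-category quasi-equivalent to $\mathsf{A}$, and then to use cofibrancy to replace the resulting quasi-functor $\mathsf{D}\to\mathsf{A}$ by an honest DG-functor.

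\emph{Setup.} By Fact \ref{fact:cofibrant} we may assume that $\mathsf{f}\colon\mathsf{C}\to\mathsf{A}^{\pretr}$ is a DG-functor, which is still a quasi-equivalence. Since $\mathsf{C}$ is then quasi-equivalent to the strongly pretriangulated $\mathsf{A}^{\pretr}$, it is pretriangulated (cf. Remark \ref{rem:h0qespretr}), so $(\mathsf{C},H^0(\mathsf{f}))$ is an enhancement of $\tr(\mathsf{A})$. Viewing $H^0(\mathsf{A})$ as a full subcategory of $\tr(\mathsf{A})=H^0(\mathsf{A}^{\pretr})$ through $H^0(\mathsf{y})$, the DG-category $\mathsf{D}$ is, by Notation \ref{nota:restrictdg}, the full DG-subcategory of $\mathsf{C}$ on those objects $X$ for which $H^0(\mathsf{f})(X)\cong\mathsf{y}(Y)$ in $\tr(\mathsf{A})$ for some $Y\in\mathsf{A}$.

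\emph{$\mathsf{D}$ is quasi-equivalent to $\mathsf{A}$.} Consider $\mathsf{A}^{\pretr}_{\mid H^0(\mathsf{A})}$, the restriction of the tautological enhancement $(\mathsf{A}^{\pretr},\id)$ to $H^0(\mathsf{A})\subset\tr(\mathsf{A})$; it is closed under homotopy equivalence by Lemma \ref{lem:restrictdg}. The Yoneda embedding factors through a fully faithful DG-functor $\mathsf{A}\to\mathsf{A}^{\pretr}_{\mid H^0(\mathsf{A})}$, which is quasi-essentially surjective because every object of $\mathsf{A}^{\pretr}_{\mid H^0(\mathsf{A})}$ is by definition isomorphic in $H^0$ to some $\mathsf{y}(Y)$, $Y\in\mathsf{A}$; hence it is a quasi-equivalence. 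On the other hand $\mathsf{f}(X)\in\mathsf{A}^{\pretr}_{\mid H^0(\mathsf{A})}$ for every $X\in\mathsf{D}$, so $\mathsf{f}$ restricts to $\mathsf{f}_{\mid\mathsf{D}}\colon\mathsf{D}\to\mathsf{A}^{\pretr}_{\mid H^0(\mathsf{A})}$; this is quasi-fully faithful (a restriction of $\mathsf{f}$ between full subcategories) and quasi-essentially surjective (if $H^0(\mathsf{f})(X)\cong Z$ with $Z\in\mathsf{A}^{\pretr}_{\mid H^0(\mathsf{A})}$, then already $X\in\mathsf{D}$), so it too is a quasi-equivalence. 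The zig-zag
\[
\mathsf{D}\xrightarrow{\ \mathsf{f}_{\mid\mathsf{D}}\ }\mathsf{A}^{\pretr}_{\mid H^0(\mathsf{A})}\xleftarrow{\ \mathsf{y}\ }\mathsf{A}
\]
thus yields a quasi-equivalence quasi-functor $\mathsf{D}\to\mathsf{A}$.

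\emph{Promotion to a DG-functor.} The crucial point is that $\mathsf{D}$ is cofibrant: a full DG-subcategory of a cofibrant DG-category is again cofibrant, since a cofibrant DG-category is, up to retract, freely generated as a graded category by a graded quiver, and this property — like that of being a retract — is inherited by the full subcategory on any set of objects, whose generating morphisms are the paths that do not pass through an intermediate object of the chosen set. Granting this, Fact \ref{fact:cofibrant} applied with $\mathsf{D}$ in place of $\mathsf{C}$ represents the quasi-functor $\mathsf{D}\to\mathsf{A}$ above by a DG-functor $\mathsf{h}\colon\mathsf{D}\to\mathsf{A}$; since $\mathsf{h}$ induces, up to isomorphism, the same functors on the homotopy and graded homotopy categories as that quasi-functor, and those are equivalences, $\mathsf{h}$ is a quasi-equivalence, as wanted. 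The remaining bookkeeping with restriction subcategories is routine; the one delicate point is the cofibrancy of $\mathsf{D}$, without which the previous step produces only a quasi-functor rather than the DG-functor demanded by the statement.
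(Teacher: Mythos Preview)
Your approach and the paper's diverge at the point where cofibrancy is invoked. You first restrict to obtain a quasi-equivalence quasi-functor $\mathsf{D}\to\mathsf{A}$ (via the zig-zag through $\mathsf{A}^{\pretr}_{\mid H^0(\mathsf{A})}$), and then appeal to cofibrancy of $\mathsf{D}$ to rectify it to a DG-functor. The paper instead keeps $\mathsf{C}$ as the source and modifies the \emph{target}: it selects a full pretriangulated DG-subcategory $\mathsf{A}'\subset\mathsf{A}^{\pretr}$ containing $\mathsf{A}$ and such that every object of $\mathsf{A}'$ homotopy equivalent to something in $\mathsf{A}$ already belongs to $\mathsf{A}$ (for instance, $\mathsf{A}$ together with all objects of $\mathsf{A}^{\pretr}$ not homotopy equivalent to anything in $\mathsf{A}$). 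Cofibrancy of $\mathsf{C}$ then turns the quasi-functor $\mathsf{C}\to\mathsf{A}^{\pretr}\leftarrow\mathsf{A}'$ into a DG-functor $\mathsf{f}'\colon\mathsf{C}\to\mathsf{A}'$, and by the design of $\mathsf{A}'$ the restriction $\mathsf{f}'_{\mid\mathsf{D}}$ lands in $\mathsf{A}$ on the nose, giving $\mathsf{h}$ directly.

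The weak spot in your argument is exactly the one you isolate: that a full DG-subcategory of a cofibrant DG-category is again cofibrant. Your justification handles the underlying graded freeness (via irreducible paths) and the passage to retracts, but Tabuada's cofibrant DG-categories carry more structure: an ordinal filtration on the generators such that the differential of each generator lies in the sub-DG-category generated by strictly earlier ones. For the full subcategory the candidate generators are the $O'$-irreducible paths, and one must produce a compatible well-ordering on them and verify that $d$ of such a path decomposes into compositions of strictly earlier irreducible paths --- this is plausible but is not what you wrote, and it is not entirely routine. The paper's construction of $\mathsf{A}'$ is precisely a device to avoid this question, using only the cofibrancy of $\mathsf{C}$ that is given by hypothesis.
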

     \begin{proof}
        We use the notation of the statement.
                        Let $\mathsf{A}'$ be a full pretriangulated DG-subcategory of $\mathsf{A}^{\pretr}$ such that $\mathsf{A}\subset \mathsf{A}'$ and all the homotopy equivalence classes of $\mathsf{A}$ in $\mathsf{A}'$ are represented only by objects of $\mathsf{A}$. In other words, if $Y\in \mathsf{A}'$ is homotopy equivalent to an object of $\mathsf{A}$, then $Y \in \mathsf{A}$.  Notice the functor $i: H^0(\mathsf{A}')\to H^0(\mathsf{A}^{\pretr})=\tr(\mathsf{A})$, obtained from the inclusion $\mathsf{A}' \subset \mathsf{A}^{\pretr}$, is an equivalence by Lemma \ref{lem:dgtaylor}, so $(\mathsf{A}',i)$ is an enhancement of $\tr(\mathsf{A})$.
                        
                        Let us now consider the quasi-equivalence quasi-functor $\mathsf{f}':\mathsf{C}\to \mathsf{A}^{\pretr} \leftarrow \mathsf{A}'$. By Fact \ref{fact:cofibrant}, we can assume $\mathsf{f}'$ to be a DG-functor. 
                        By definition, we have that $\mathsf{f}'(\mathsf{D})\subset\mathsf{A}$ by the choice of $\mathsf{A}'$. Finally, the restriction $\mathsf{f}'_{\mid \mathsf{D}}: \mathsf{D} \to \mathsf{A}$ is the wanted quasi-equivalence $\mathsf{h}$. 
                       %
                \end{proof}

                \begin{prop}\label{prop:sueuep}
                    Let $\mathsf{B}$ be a graded category. If $\tr(\mathsf{B})$ has a strongly unique enhancement, then $\mathsf{B}$ is triangulated formal and formally standard.
                    \end{prop}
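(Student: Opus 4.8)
The plan is to prove the two conclusions separately, reducing each to facts already available. Assume $\tr(\mathsf{B})$ has a strongly unique enhancement. Triangulated formality is the easy half: by Proposition \ref{prop:uniquetrdc} (or directly) a strongly unique enhancement is in particular a unique enhancement, and moreover by Proposition \ref{prop:strongenh} the identity autoequivalence $\id:\tr(\mathsf{B})\to\tr(\mathsf{B})$ has an $(\mathsf{C},E)$-lift for every enhancement $(\mathsf{C},E)$. Applying this to the enhancement $(\mathsf{C},E)$ of $\tr(\mathsf{B})=\tr(\mathsf{B}^{\pretr})$ we get a quasi-equivalence quasi-functor; then composing with the canonical identification, exactly as in Remark \ref{rem:sfunique}(4), produces the quasi-equivalence quasi-functor $\mathsf{f}:\mathsf{B}^{\pretr}\to\mathsf{C}$ with $EH^0(\mathsf{f})(X)\cong X$ for all $X\in H^0(\mathsf{B})=\mathsf{B}$. (Here I use that for a graded category $\mathsf{B}$ the objects of $H^0(\mathsf{B})$ are the objects of $\mathsf{B}$.) Hence $\mathsf{B}$ is triangulated formal. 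This step is essentially a citation of Remark \ref{rem:sfunique}(5): semi-strong uniqueness of $\tr(\mathsf{B}^{\pretr})$ already gives triangulated formality of $\mathsf{B}^{\pretr}$, and $\tr(\mathsf{B})\cong\tr(\mathsf{B}^{\pretr})$, so by Remark \ref{rem:sfunique}(1),(2) and the fact that $\mathsf{B}\subset\mathsf{B}^{\pretr}$ induces $\tr(\mathsf{B})\cong\tr(\mathsf{B}^{\pretr})$, triangulated formality descends to $\mathsf{B}$.

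The substantive part is formal standardness. By Remark \ref{rem:gradrestid} it suffices to show: if $F:\tr(\mathsf{B})\to\tr(\mathsf{B})$ is a triangulated equivalence with $FX=X$ for all $X\in\mathsf{B}$ (condition \eqref{eq:fgrad}) and $F^{gr}_{\mid H^*(\mathsf{B})}\cong\id$, then $F\cong\id$. The idea is to feed $F$ into the strong uniqueness machinery. Consider the enhancement $(\mathsf{B}^{\pretr},\id)$ of $\tr(\mathsf{B})$. Since $\mathsf{B}$ is lifted (Example \ref{es:lifted}, first bullet: every graded category is lifted, via the graded restriction), and we have just shown it is triangulated formal, and $\tr(\mathsf{B})$ has a strongly unique enhancement, the natural move is to realize $F$ as $EH^0(\mathsf{f})$ for a suitable quasi-functor and then compare graded restrictions. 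Concretely: because $\mathsf{B}$ is lifted, $F^{gr}_{\mid H^*(\mathsf{B})}$ lifts to a quasi-functor $\mathsf{g}:\mathsf{B}\to\mathsf{B}$ with $H^*(\mathsf{g})\cong F^{gr}_{\mid H^*(\mathsf{B})}\cong\id$; since $\mathsf{B}$ is a graded category, $\mathsf{g}$ may be taken to be (quasi-isomorphic to) the identity DG-functor, so $H^0(\mathsf{g}^{\pretr})\cong\id$ on $\tr(\mathsf{B})$. The point to establish is that $F$ is determined, up to natural isomorphism, by its graded restriction — i.e. that $F\cong H^0(\mathsf{g}^{\pretr})\cong\id$ — and this is exactly where strong uniqueness is needed, through the interplay already used in the proof of Proposition \ref{prop:sffssue}.

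More precisely, I would argue as follows. By strong uniqueness and Proposition \ref{prop:strongenh}, $F$ has an $(\mathsf{B}^{\pretr},\id)$-lift: a quasi-functor $\mathsf{F}:\mathsf{B}^{\pretr}\to\mathsf{B}^{\pretr}$ with $H^0(\mathsf{F})\cong F$ (using $E=\id$). Restricting to a suitable full pretriangulated DG-subcategory via Lemma \ref{lem:cofpres}/Fact \ref{fact:cofibrant}, we may present $\mathsf{F}$ so that it sends $\mathsf{B}$ into $\mathsf{B}$ and thus restricts to a DG-endofunctor $\mathsf{F}_0$ of $\mathsf{B}$ (this is legitimate since $FX=X$ for $X\in\mathsf{B}$, so each $\mathsf{F}(X)$ is homotopy equivalent — hence, after the standard adjustment, equal — to $X$). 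Then $H^*(\mathsf{F}_0)$ computes $F^{gr}_{\mid H^*(\mathsf{B})}$, which is isomorphic to $\id$ by hypothesis; hence $\mathsf{F}_0$ is a quasi-equivalence quasi-isomorphic to $\id_{\mathsf{B}}$, and by Proposition \ref{prop:dgextension} (fully faithfulness of extension along $\mathsf{y}:\mathsf{B}\hookrightarrow\mathsf{B}^{\pretr}$) it follows that $\mathsf{F}=\mathsf{F}_0^{\pretr}$ is quasi-isomorphic to $\id_{\mathsf{B}^{\pretr}}$, so $F\cong H^0(\mathsf{F})\cong\id$. By Remark \ref{rem:gradrestid}, $\mathsf{B}$ is formally standard.

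The main obstacle is the middle step: passing from the abstract $(\mathsf{B}^{\pretr},\id)$-lift $\mathsf{F}$ of $F$ — which a priori is only a zig-zag of DG-functors, not honestly an endofunctor of $\mathsf{B}^{\pretr}$ fixing $\mathsf{B}$ on the nose — to a strict DG-model $\mathsf{F}_0:\mathsf{B}\to\mathsf{B}$ whose graded cohomology is visibly $F^{gr}_{\mid H^*(\mathsf{B})}$. This requires combining Fact \ref{fact:cofibrant} (represent the quasi-functor by a genuine DG-functor after cofibrant replacement) with the restriction trick of Lemma \ref{lem:cofpres} (cut down to a full pretriangulated subcategory so that the DG-functor preserves $\mathsf{B}$), and then invoking Proposition \ref{prop:dgextension} to reconstruct $\mathsf{F}$ up to DG-natural isomorphism from $\mathsf{F}_0$. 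Once that bookkeeping is in place, everything else is formal: the graded category hypothesis makes $\mathsf{B}$ lifted and makes $H^*$ of the relevant DG-functors transparent, and the hypothesis $F^{gr}\cong\id$ forces $\mathsf{F}_0\simeq\id$, hence $F\cong\id$.
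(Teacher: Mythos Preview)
Your proposal is correct and follows essentially the same route as the paper: triangulated formality via Remark~\ref{rem:sfunique}, and formal standardness by lifting $F$ to a DG-endofunctor on a cofibrant model, restricting to the generating subcategory, and using that $\mathsf{B}$ is graded to identify the restriction with $F^{gr}_{\mid\mathsf{B}}$. The only differences are cosmetic: you reduce (via Remark~\ref{rem:gradrestid}) to the case $F^{gr}\cong\id$, whereas the paper proves directly that $F\cong H^0((F^{gr}_{\mid\mathsf{B}})^{\pretr})$ for every $F$ satisfying~\eqref{eq:fgrad}; and the paper is more explicit about the rectification step you flag as the ``main obstacle''---one does not literally obtain a DG-endofunctor of $\mathsf{B}$, but rather $\mathsf{f}_{\mathsf{A}}:\mathsf{A}\to\mathsf{A}$ on a quasi-equivalent $\mathsf{A}\subset\mathsf{C}$ (with $\mathsf{C}$ cofibrant and the equivalence $H^0(\mathsf{e})$ chosen so that $\mathsf{A}$ is exactly $\mathsf{C}^{H^0(\mathsf{e})}_{\mid H^0(\mathsf{B})}$, guaranteeing $\mathsf{f}(\mathsf{A})\subseteq\mathsf{A}$), and then transports $H^*(\mathsf{f}_{\mathsf{A}})$ to $\mathsf{B}$ along the graded equivalence $H^*(\mathsf{h})$.
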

                    \begin{proof}
                        By Remark \ref{rem:sfunique}, we are reduced to check that $\mathsf{B}$ is formally standard. For this purpose, we will show that any autoequivalence $F$ on $\tr(\mathsf{B})$ satisfying \eqref{eq:fgrad} is naturally isomorphic to $H^0((F_{\mid \mathsf{B}}^{gr})^{\pretr})$. Indeed, if this is true, given any autoequivalence $G$ such that \eqref{eq:fgrad} holds, $F_{\mid \mathsf{B}}^{gr} \cong G_{\mid \mathsf{B}}^{gr}$ implies that $(F_{\mid \mathsf{B}}^{gr})^{\pretr} \cong (G_{\mid \mathsf{B}}^{gr})^{\pretr}$ by Proposition \ref{prop:dgextension}, from which $F \cong H^0((F_{\mid \mathsf{B}}^{gr})^{\pretr}) \cong H^0((G_{\mid \mathsf{B}}^{gr})^{\pretr})\cong G$, as wanted.
                        We divide our reasoning in two steps:
                        \begin{enumerate}
                            \item 
                            We choose a presentation $\mathsf{A}$ according to Lemma \ref{lem:cofpres} and a "well-behaved" associated enhancement $(\mathsf{C},H^0(\mathsf{e}))$. The meaning of its well-behaviour will become clear in the second part of the proof.
                            \item We describe a DG-functor $\mathsf{f}'$, which is a $(\mathsf{B}^{\pretr}, \id)$-lift of $F$. We conclude that $F \cong H^0(\mathsf{f}') \cong H^0((F^{gr}_{\mid \mathsf{B}})^{\pretr})$.
                        \end{enumerate}

                        We focus on item 1. Let $(\mathsf{C},E)$ be a cofibrant enhancement of $\tr(\mathsf{B})$ and define $\mathsf{A}:= \mathsf{C}_{\mid H^0(\mathsf{B})}^E$. 
                        We consider the DG-functors $\mathsf{j}:\mathsf{A}^{\pretr} \to \mathsf{C}^{\pretr}$, induced by the inclusion, and $\mathsf{h}:\mathsf{A} \to \mathsf{B}$, associated to a $(\mathsf{C},E)-(\mathsf{B}^{\pretr},\id)$-lift of the identity (which exists by Proposition \ref{prop:strongenh}) as expressed in Lemma \ref{lem:cofpres}. We define $\mathsf{e}$ to be the (quasi-equivalence) quasi-functor given by the following composition
                        \[
                         \begin{tikzcd}
                                \mathsf{C} \ar[r,hook,"\mathsf{y}"] & \mathsf{C}^{\pretr} &\mathsf{A}^{\pretr} \ar[l,"\mathsf{j}" above] \ar[r,"\mathsf{h}^{\pretr}"] & \mathsf{B}^{\pretr},
                         \end{tikzcd}  
                        \]
                        where $\mathsf{y}$ is the Yoneda embedding. 
                         We want to consider the enhancement $(\mathsf{C},H^0(\mathsf{e}))$. The "well-behaviour" discussed above is motivated by the fact that $\mathsf{A}= \mathsf{C}_{\mid H^0 (\mathsf{B})}^{H^0(\mathsf{e})}$. Let us prove it.
                        By the definition of the functors, for every $X \in H^0(\mathsf{A})$ we have 
                    \begin{equation*}
                        \begin{split}
                        H^0(\mathsf{e})(X)  & \cong  H^0(\mathsf{h}^{\pretr})H^0(   \mathsf{j})^{-1}H^0(\mathsf{y})(X) \cong H^0(\mathsf{h}^{\pretr})H^0(   \mathsf{j})^{-1}(X) \\
                        &\cong H^0(\mathsf{h}^{\pretr})(X)\cong Y \in H^0(\mathsf{B}).    
                        \end{split}
                    \end{equation*}
                    This implies that $\mathsf{A} \subset \mathsf{C}_{\mid H^0(\mathsf{B})}^{H^0(\mathsf{e})}$. Conversely, let $X \in \mathsf{C}_{\mid H^0(\mathsf{B})}^{H^0(\mathsf{e})}$. Then we have $H^0(\mathsf{e})(X) \cong Y \in H^0(\mathsf{B})$. Since $\mathsf{h}:\mathsf{A} \to \mathsf{B}$ is a quasi-equivalence, there exists $Z \in \mathsf{A}$ such that $H^0(\mathsf{h})(Z) \cong Y$. From the definitions of $\mathsf{j}$ and $\mathsf{y}$, we have $H^0(\mathsf{e}) (Z) \cong Y$. In particular, $X$ and $Z$ are homotopy equivalent. Since $\mathsf{A}$ is closed under homotopy equivalence by Lemma \ref{lem:restrictdg}, $X \in \mathsf{A}$ as wanted.

                    We prove item 2. By Proposition \ref{prop:strongenh}, $F$ has a $(\mathsf{C},H^0(\mathsf{e}))$-lift, and by Fact \ref{fact:cofibrant}, this lift can be chosen to be a DG-functor $\mathsf{f}$. Since $\mathsf{A}= \mathsf{C}_{\mid H^0(\mathsf{B})}^{H^0(\mathsf{e})}$, we have that $\mathsf{f}(\mathsf{A}) \subseteq \mathsf{A}$ because $H^0(\mathsf{e})H^0(\mathsf{f})(X)\cong FH^0(\mathsf{e})(X)\cong H^0(\mathsf{e})(X)$ for every $X \in H^0(\mathsf{A})$. We define $\mathsf{f}_{\mathsf{A}}:=\mathsf{f}_{\mid \mathsf{A}} : \mathsf{A} \to \mathsf{A}$. Notice that $\mathsf{f}^{\pretr}\mathsf{j} \cong \mathsf{jf}_{\mathsf{A}}^{\pretr}$ by Proposition \ref{prop:dgextension}. 
        
        Moreover, since $\mathsf{B}$ is graded and $\mathsf{h}:\mathsf{A}\to \mathsf{B}$ is a quasi-equivalence, we have that $H^*(\mathsf{h}): H^*(\mathsf{A}) \to H^*(\mathsf{B})=\mathsf{B}$ is a graded equivalence. Therefore, for the sake of simplicity, we can replace $\mathsf{B}$ with $H^*(\mathsf{A})$ and $\mathsf{h}$ with $H^*(\mathsf{h})^{-1} \mathsf{h}$, so that $H^*(\mathsf{h})=\id$. 
        From the definition of $H^*$, we obtain the commutative diagram of DG-categories
        \[
        \begin{tikzcd}
        \mathsf{A} \ar[r,"\mathsf{f}_{\mathsf{A}}"]\ar[d,"\mathsf{h}"] & \mathsf{A}\ar[d,"\mathsf{h}"]\\
        \mathsf{B} \ar[r,"H^*(\mathsf{f}_{\mathsf{A}})"] & \mathsf{B}
        \end{tikzcd}
        \]
        which can be extended to the pretriangulated closures by Proposition \ref{prop:dgextension}. Let $\mathsf{f}'=(H^*(\mathsf{f}_{\mathsf{A}}))^{\pretr}$. We have the following situation
        \[
        \begin{tikzcd}
        \mathsf{B}^{\pretr} \ar[d,"\mathsf{f}'"] & \mathsf{A}^{\pretr} \ar[l, "\mathsf{h}^{\pretr}" above] \ar[r,"\mathsf{j}"] \ar[d,"\mathsf{f}_{\mathsf{A}}^{\pretr}"] & \mathsf{C}^{\pretr} \ar[d,"\mathsf{f}^{\pretr}"] & \mathsf{C} \ar[lll,bend right, "\mathsf{e}" above]\ar[l,hook',"\mathsf{y}" above]\ar[r,rightsquigarrow,"H^0(\mathsf{e})"]\ar[d,"\mathsf{f}"] & \tr(\mathsf{B})\ar[d,"F"]\\
        \mathsf{B}^{\pretr} & \mathsf{A}^{\pretr} \ar[l, "\mathsf{h}^{\pretr}" above] \ar[r,"\mathsf{j}"] & \mathsf{C}^{\pretr}  & \mathsf{C} \ar[lll,bend left,"\mathsf{e}" above] \ar[l,hook',"\mathsf{y}" above]\ar[r,rightsquigarrow,"H^0(\mathsf{e})"] & \tr(\mathsf{B}).
        \end{tikzcd}
        \]
In particular, this diagram shows that $\mathsf{f}'$ is a $(\mathsf{B}^{\pretr}, \id )$-lift of $F$. We have a natural isomorphism $\eta: F \to H^0(\mathsf{f}')$, which gives a graded isomorphism $\mu: F_{\mid \mathsf{B}}^{gr} \to H^{*}(\mathsf{f}_{\mathsf{A}})$. Being $\mathsf{B}$ a DG-category with trivial differential, $\mu$ is a DG-natural isomorphism, so we can extend it to a unique DG-natural isomorphism on $\mathsf{B}^{\pretr}$ by Proposition \ref{prop:dgextension}. In particular, $(F_{\mid \mathsf{B}}^{gr})^{\pretr} \cong \mathsf{f}'$ up to DG-isomorphism, so $F \cong H^0((F_{\mid \mathsf{B}}^{gr})^{\pretr})$.
\end{proof}

            \begin{thm}\label{thm:main}
                Let $\mathsf{B}$ be a graded category. The following are equivalent:
                \begin{enumerate}
                    \item $\mathsf{B}$ is triangulated formal and formally standard;
                    \item $\tr(\mathsf{B})$ has a strongly unique enhancement;
                    \item $\dc{\mathsf{B}}$ has a strongly unique enhancement.
                \end{enumerate}
            \end{thm}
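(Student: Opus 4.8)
The plan is to assemble the theorem entirely from results already proved, closing a short cycle of implications among the three conditions; no new construction is needed. The one observation that makes everything fit is that a graded category carries trivial differentials, hence by Example \ref{es:lifted} it is automatically \emph{lifted}. This is exactly what lets us apply Proposition \ref{prop:sffssue} and Remark \ref{rem:perfsffssue} to $\mathsf{B}$ itself without having to check the liftedness hypothesis separately.

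First I would prove $1 \Rightarrow 2$ and $1 \Rightarrow 3$ simultaneously: assuming $\mathsf{B}$ is triangulated formal and formally standard, and recalling that $\mathsf{B}$ is lifted, Proposition \ref{prop:sffssue} gives that $\tr(\mathsf{B})$ has a strongly unique enhancement, while Remark \ref{rem:perfsffssue} gives the same for $\dc{\mathsf{B}}$. Next, $3 \Rightarrow 2$ is immediate from Proposition \ref{prop:dcatra} applied with $\mathsf{A} = \mathsf{B}$, since strong uniqueness of the enhancement of $\dc{\mathsf{B}}$ forces it for $\tr(\mathsf{B})$. Finally, $2 \Rightarrow 1$ is precisely Proposition \ref{prop:sueuep}, which is stated for graded categories and yields both triangulated formality and formal standardness of $\mathsf{B}$ from strong uniqueness of the enhancement of $\tr(\mathsf{B})$. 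Together these give the cycle $1 \Rightarrow 3 \Rightarrow 2 \Rightarrow 1$ (with $1 \Rightarrow 2$ as a redundant but convenient shortcut), so all three conditions are equivalent.

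I do not expect a genuine obstacle: the substantive work sits in Proposition \ref{prop:sffssue}, Proposition \ref{prop:sueuep} (via the cofibrant presentation of Lemma \ref{lem:cofpres}), and Proposition \ref{prop:dcatra}. The only point deserving a sentence of care is where the hypothesis "$\mathsf{B}$ graded" is actually consumed: in $1 \Rightarrow 2,3$ it is used only to obtain liftedness for free, whereas in the converse direction $2 \Rightarrow 1$ it is used essentially, through the trivial-differential manipulations in the proof of Proposition \ref{prop:sueuep} that identify $F$ with $H^0((F^{gr}_{\mid \mathsf{B}})^{\pretr})$. I would state the proof as the three short implications above and cite the relevant results, keeping it to a few lines.
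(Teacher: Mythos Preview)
Your proposal is correct and follows essentially the same approach as the paper: the implications $1 \Leftrightarrow 2$ are obtained from Proposition \ref{prop:sffssue} (together with Example \ref{es:lifted} for liftedness) and Proposition \ref{prop:sueuep}, $1 \Rightarrow 3$ from Remark \ref{rem:perfsffssue}, and $3 \Rightarrow 2$ from Proposition \ref{prop:dcatra}. Your added commentary on where exactly the gradedness hypothesis is consumed is accurate and a nice clarification.
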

            \begin{proof}
                Proposition \ref{prop:sffssue} (remembering Example \ref{es:lifted}) and Proposition \ref{prop:sueuep} prove $1 \Leftrightarrow 2$. Remark \ref{rem:perfsffssue} deals with $1 \Rightarrow 3$, while Proposition \ref{prop:dcatra} shows $3 \Rightarrow 2$.
            \end{proof}

            \section{Free generators}\label{sec:freegen}
            Our aim is to apply Theorem \ref{thm:main}. We start with some simple examples, described by the following non-canonical definition.
            \begin{defn}
                A DG-category $\mathsf{A}$ is a \emph{free generator} if every object $X \in \dc{\mathsf{A}}$ is isomorphic to a direct summand of 
                \begin{equation}\label{eq:simplesum}
                    \bigoplus_{i \in \mathbb{Z}} \left( \bigoplus_{j=1}^{n_i} C_{i,j} [i]   \right) \qquad \text{for some } C_{i,j} \in H^0(\mathsf{A})\text{ and }n_i\ne 0\text{ for finitely many }i\text{'s.}
                \end{equation}
            \end{defn}
            \begin{es}\label{es:freegen}
                In a trivial way, any pretriangulated DG-category is a free generator, but this example is far from our idea of application, since we want to study it for graded categories (i.e. DG-categories with trivial differential). Let us give some meaningful examples.
                \begin{itemize}
                    \item The DG-category $R$ given by a semisimple ring is a free generator. Indeed, a finitely generated $R$-module is a direct summand of a free $R$-module. As a consequence, $\dc{R}$ is obtained by cones of closed morphisms
                    \[
                    \bigoplus_{i \in \mathbb{Z}} R^{n_i}[i] \to \bigoplus_{j\in \mathbb{Z}} R^{m_j} [j] ,    
                    \]
                    which are simply given by kernels and cokernels, again expressed via finitely generated $R$-modules.

                    \item Consider an algebraic finite triangulated category $\mathcal{T}$ as defined in \cite{muro} and let $\Lambda$ be the algebra of endomorphisms associated to a basic additive generator $X$. Given an enhancement ($\mathsf{C},E)$ of  $\mathcal{T}$, any object $Y \in E^{-1}(X)$, together with its endomorphisms, defines a free generator DG-category with one object (this follows from the fact that $\mathcal{T}$ is equivalent to the category of finitely generated projective (right) $\Lambda$-modules).
                \end{itemize}
            \end{es}
            Roughly speaking, the following lemma tells us that whenever a ring is a free generator, then it is a free generator also for its associated periodic triangulated categories (the notion of periodic triangulated category is given, for instance, in the introduction of \cite{saito23}).
     \begin{lem}\label{lem:freegenperiod}
        Let $R$ be a ring, i.e. a graded ring concentrated in degree 0, and let us consider $R[t,t^{-1}]$ with degree induced by setting $\deg(t)=n>0$. If $R$ is a free generator, then so is $R[t,t^{-1}]$.
    \end{lem}
    \begin{proof} 
    Let us consider the inclusion $R \to R[t,t^{-1}]$, which is a (differential) graded morphism. Then we can extend it to a DG-functor $\pi: \perf(R) \to \perf(R[t,t^{-1}])$. 
    By \cite[Proposition 3.36]{saito23}, a DG-module of $R[t,t^{-1}]$ can be considered as an $n$-periodic DG-module of $R$, in the sense of \cite[Definition 3.1]{saito23}.
    Accordingly, $\pi$ is explicitly expressed by (cf. \cite[Definition 3.8]{saito23})
    \begin{equation*}
    \begin{split}
    \pi(M)^k&:= \bigoplus_{\bar{\ell}= k} M^\ell , \qquad d_{\pi(M)}^k := \bigoplus_{\bar{\ell}=k }d_M^\ell
    \end{split}
    \end{equation*}
    where $\bar{\ell}$ indicates the representative of $\ell$ in $\mathbb{Z}/n\mathbb{Z}$ (the behaviour of $\pi$ on morphisms is expressed accordingly).
    From this description, it is easy to notice that $\pi$ is essentially surjective. Since $\pi$ is also additive and preserves suspensions, the statement follows.
    \end{proof}
            \begin{prop}\label{prop:simplesue}
                Let $\mathsf{A}$ be a free generator DG-category. Then $\mathsf{A}$ is formally standard. 
            \end{prop}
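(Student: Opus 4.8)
The plan is to rely on the reformulation of formal standardness in Remark~\ref{rem:gradrestid}: it suffices to show that any triangulated autoequivalence $(F,\eta)$ of $\tr(\mathsf{A})$ with $FX=X$ for all $X\in H^0(\mathsf{A})$ (i.e.\ satisfying \eqref{eq:fgrad}) and with $F_{\mid H^*(\mathsf{A})}^{gr}\cong\id$ is naturally isomorphic to $\id$. So I would fix such an $F$, a graded natural isomorphism $\mu\colon\id\Rightarrow F^{gr}$ (writing $F^{gr}:=F_{\mid H^*(\mathsf{A})}^{gr}$), and note that each $\mu_X\in\hom_{H^*(\mathsf{A})}(X,X)^0=\hom_{\tr(\mathsf{A})}(X,X)$ is an isomorphism. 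Let $\mathcal{C}\subseteq\tr(\mathsf{A})$ be the full additive subcategory whose objects are the finite sums $\bigoplus_i\bigoplus_{j=1}^{n_i}C_{i,j}[i]$ with $C_{i,j}\in H^0(\mathsf{A})$. Since $\tr(\mathsf{A})\hookrightarrow\dc{\mathsf{A}}$ is fully faithful (Remark~\ref{rem:dc}) and these objects already lie in $\tr(\mathsf{A})$, the free generator hypothesis says that every object of $\tr(\mathsf{A})$ is a direct summand, inside $\tr(\mathsf{A})$, of an object of $\mathcal{C}$. Hence the strategy is to build the natural isomorphism $\id\Rightarrow F$ first on $\mathcal{C}$ and then transport it to retracts by a routine idempotent-completion argument; the substance of the proof is the construction on $\mathcal{C}$ out of $\mu$.

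Concretely, for a shifted generator I would set $\theta_{C[i]}:=(\eta^i_C)^{-1}\circ\mu_C[i]\colon C[i]\to F(C[i])$, using $FC=C$ and the structural isomorphism $\eta^i_C\colon F(C[i])\to(FC)[i]=C[i]$ of $F$; and for a general $S=\bigoplus_{i,j}C_{i,j}[i]\in\mathcal{C}$, with biproduct inclusions $\iota^S_{i,j}$ and projections $\pi^S_{i,j}$,
\[
\theta_S:=\sum_{i,j}F(\iota^S_{i,j})\circ(\eta^i_{C_{i,j}})^{-1}\circ\mu_{C_{i,j}}[i]\circ\pi^S_{i,j}\colon S\to FS.
\]
This is an isomorphism, since it factors as the diagonal automorphism of $S$ with entries $\mu_{C_{i,j}}[i]$ followed by the canonical comparison $S\cong FS$ coming from additivity of $F$ together with the isomorphisms $(\eta^i_{C_{i,j}})^{-1}$. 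Because $\theta_S\circ\iota^S_{i,j}=F(\iota^S_{i,j})\circ\theta_{C_{i,j}[i]}$, additivity reduces naturality of $\theta$ on $\mathcal{C}$ to the single‑summand identity $\theta_{D[k]}\circ g=F(g)\circ\theta_{C[i]}$ for $g\colon C[i]\to D[k]$; writing $g=\tilde g[i]$ with $\tilde g\colon C\to D[m]$ and $m=k-i$, and composing with $\eta^k_D$ on the left, this becomes $\mu_D[k]\circ g=\eta^k_D\circ F(g)\circ(\eta^i_C)^{-1}\circ\mu_C[i]$. The right-hand side I would evaluate using the compatibility $\eta^k_D=\eta^{m+i}_D=\eta^m_D[i]\circ\eta^i_{D[m]}$, naturality of $\eta$ (so $\eta^i_{D[m]}\circ F(\tilde g[i])=(F\tilde g)[i]\circ\eta^i_C$), the formula $F^{gr}(\tilde g)=\eta^m_D\circ F(\tilde g)$ for the degree-$m$ part of the graded restriction (as in Definition~\ref{def:fgrad}), and finally naturality of $\mu$, which yields $F^{gr}(\tilde g)\circ\mu_C=\mu_D[m]\circ\tilde g$; shifting the outcome by $[i]$ recovers the left-hand side.

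Once $\theta$ is a natural isomorphism on $\mathcal{C}$, the final step is the extension: for $X\in\tr(\mathsf{A})$ pick $S\in\mathcal{C}$ and $a\colon X\to S$, $b\colon S\to X$ with $ba=\id_X$, and set $\theta_X:=F(b)\circ\theta_S\circ a$. Independence of the choice of $(S,a,b)$ and naturality over all of $\tr(\mathsf{A})$ follow formally by inserting $\id_X=ba$ and using that $\mathcal{C}$ is full (so the comparison maps between two ambient sums lie in $\mathcal{C}$, where naturality of $\theta$ applies), and $\theta_X$ is invertible because the natural isomorphism $\theta_S$ intertwines the idempotent $ab$ with $F(ab)$. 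This gives $\id_{\tr(\mathsf{A})}\cong F$, and Remark~\ref{rem:gradrestid} concludes that $\mathsf{A}$ is formally standard. I expect the main obstacle to be precisely the naturality check for $\theta$ on $\mathcal{C}$: it requires carefully tracking the structural isomorphism $\eta$ of $F$ through iterated shifts and matching it against the definition of the graded restriction, whereas the reduction to single summands, the invertibility of $\theta_S$, and the passage to retracts are all routine.
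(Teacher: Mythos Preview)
Your argument is correct and follows essentially the same strategy as the paper: build a natural isomorphism $F\cong\id$ on the full additive subcategory of finite sums of shifts of objects from $H^0(\mathsf{A})$, then propagate it to all of $\tr(\mathsf{A})$ using that every object is a retract of such a sum. The only differences are expository: the paper merely asserts that $F$ restricted to this subcategory is naturally isomorphic to the identity (the step you work out in detail via $\theta$ and the coherence of $\eta$), and it obtains the extension to retracts by citing \cite[Proposition 1.3]{balmerschlichting} after passing to $\dc{\mathsf{A}}$, whereas you carry out the retract argument directly inside $\tr(\mathsf{A})$.
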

            \begin{proof}
                Define $\mathsf{A}_{\operatorname{add}}$ the full DG-subcategory of $\mathsf{A}^{\pretr}$ whose objects are of the form \eqref{eq:simplesum}. Notice that $H^0(\mathsf{A}_{\operatorname{add}}) \subset \tr (\mathsf{A})$. Let $F: \tr(\mathsf{A})\to \tr(\mathsf{A})$ a triangulated equivalence such that its graded restriction is naturally isomorphic to the identity. Therefore, $F_{\mid H^0(\mathsf{A}_{\operatorname{add}})}$ is naturally isomorphic to the identity. Let $G$ be the composition
                \[
                \begin{tikzcd}[column sep=large]
                    H^0(\mathsf{A}_{\operatorname{add}}) \ar[r,"F_{\mid H^0(\mathsf{A}_{\operatorname{add}})}"] &\tr(\mathsf{A}) \ar[r,hook] &\dc{\mathsf{A}}
                \end{tikzcd}    
                \]
            Since $\mathsf{A}$ is a free generator, $\dc{\mathsf{A}}$ is the idempotent completion of $H^0(\mathsf{A}_{\operatorname{add}})$; \cite[Proposition 1.3]{balmerschlichting} gives a unique extension $H:\dc{\mathsf{A}} \to \dc {\mathsf{A}}$ of $G$, which is therefore an extension of $F$. Further, from the same proposition the natural isomorphism $F_{\mid H^0(\mathsf{A}_{\operatorname{add}})} \to \id$ extends to a natural isomorphism $H \to \id$. By restricting this natural isomorphism, $F$ is naturally isomorphic to the identity. Remark \ref{rem:gradrestid} concludes the proof.
            \end{proof}
            \begin{cor}\label{cor:stronguniquefree}
                Let $\field$ be a field. Given a free generator $\field$-algebra $\Lambda$ with finite projective dimension $d$ as a $\Lambda$-bimodule, then $\dc{\Lambda[t,t^{-1}]}$ has a strongly unique $\field$-linear enhancement for any $t$ homogeneous of degree greater or equal than $d$.
            \end{cor}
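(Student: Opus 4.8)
The plan is to apply Theorem \ref{thm:main} to the graded $\field$-algebra $B := \Lambda[t,t^{-1}]$, viewed as a graded category with a single object $O_B$ (it has trivial differential, so it genuinely is a graded category). Since we work $\field$-linearly throughout (Convention \ref{conv:field} with $\ring = \field$), a strongly unique enhancement of $\dc{B}$ in the sense of the paper \emph{is} a strongly unique $\field$-linear enhancement; so by the implication $1 \Rightarrow 3$ of Theorem \ref{thm:main} it is enough to verify that $B$ is triangulated formal and formally standard.

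Formal standardness is immediate from the machinery already in place. Since $\Lambda$ is a free generator and $t$ is homogeneous of positive degree, Lemma \ref{lem:freegenperiod} shows that $B = \Lambda[t,t^{-1}]$ is again a free generator; and Proposition \ref{prop:simplesue} asserts that every free generator DG-category is formally standard. Hence $B$ is formally standard.

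For triangulated formality, by Example \ref{es:intrformalstrong} it suffices to show that the graded algebra $B$ is intrinsically formal in the sense of Definition \ref{def:intr}, and this is where the hypothesis $\deg t \geq d$ enters. The obstruction theory for formality (via the minimal $A_\infty$-model of a DG-algebra with cohomology $B$) places the successive obstructions in the Hochschild cohomology $\hoch^{n}(B,B)$ of internal degree $2-n$ for $n \geq 3$; combining the length-$d$ bimodule resolution of $\Lambda$ with a Künneth-type identification $\hoch^{*}(\Lambda[t,t^{-1}]) \cong \hoch^{*}(\Lambda)[t,t^{-1}]$, one sees that a nonzero contribution to internal degree $2-n$ would force $\deg t$ to divide $n-2$ with $\hoch^{n}(\Lambda,\Lambda)$ or $\hoch^{n-1}(\Lambda,\Lambda)$ nonzero, i.e. $1 \le n-2 \le d-1$ together with $\deg t \mid (n-2)$ — impossible once $\deg t \geq d$. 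So all obstructions vanish and $B$ is intrinsically formal; this is the content of \cite[Corollary 4.2]{saito} (the case $\Lambda = \field$ recovering the intrinsic formality of $\field[t,t^{-1}]$ quoted in the introduction).

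Having both properties, $B$ satisfies condition (1) of Theorem \ref{thm:main}, and therefore $\dc{B} = \dc{\Lambda[t,t^{-1}]}$ has a strongly unique $\field$-linear enhancement. The only step that is not pure bookkeeping is the intrinsic formality of $\Lambda[t,t^{-1}]$ — equivalently, the Hochschild-cohomology vanishing governed by the degree bound — while the rest follows formally from Lemma \ref{lem:freegenperiod}, Proposition \ref{prop:simplesue} and Theorem \ref{thm:main}.
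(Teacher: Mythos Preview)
Your proof is correct and follows essentially the same route as the paper: use Lemma~\ref{lem:freegenperiod} and Proposition~\ref{prop:simplesue} for formal standardness, invoke \cite[Corollary~4.2]{saito} (via Example~\ref{es:intrformalstrong}) for triangulated formality, and conclude by Theorem~\ref{thm:main}. The only difference is expository: you sketch the Hochschild obstruction argument behind Saito's result, whereas the paper simply cites it.
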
    
                \begin{proof}
                Under these assumptions, \cite[Corollary 4.8]{saito23} shows that $\Lambda[t,t^{-1}]$ is intrinsically formal. By Example \ref{es:intrformalstrong}, Lemma \ref{lem:freegenperiod},  Proposition \ref{prop:simplesue} and Theorem \ref{thm:main}, we conclude.
            \end{proof}

            \begin{es} Here we list some examples following from the previous corollary. \label{es:Kmod}\item 
                \begin{itemize}
                    \item The triangulated category $\field\fmod$ with suspension the identity is $\tr(\field[t,t^{-1}])$ with $t$ of degree $1$, so it has a strongly unique $\field$-linear enhancement by Corollary \ref{cor:stronguniquefree}. Notice this is not the case when $\field=\mathbb{F}_p$ (with $p$ a prime) and we consider linearity over $\ring=\mathbb{Z}$ (see \cite{schlichting} and \cite[Corollary 3.10]{canonacostellari}). In particular, $\mathbb{F}_p [t,t^{-1}]$ is not intrinsically formal as a $\mathbb{Z}$-linear graded ring.
                \item  Let $\field$ be a perfect field and let $\Lambda$ be a semisimple finite-dimensional $\field$-algebra. In this case, $\Lambda$ is a projective $\Lambda$-bimodule by \cite[Corollary b, p. 192]{pierce}. Then, by Example \ref{es:freegen} and Corollary \ref{cor:stronguniquefree}, $\dc{\Lambda[t,t^{-1}]}$ has a strongly unique $\field$-linear enhancement for any homogeneous $t$ of positive degree.
                \end{itemize}
            \end{es}
            \begin{rem}
                Let us briefly discuss the example of a non-unique $\field$-linear enhancement provided by Rizzardo and Van den Bergh in \cite{rizzvdbnonunique}.
                
                Let $\field$ be a field and $\mathbb{F}:= \field (x_1 , \dots , x_n)$ with $n>0$ even. Then $\dc{\mathbb{F}[t,t^{-1}]}$ with $\deg(t)=n$ has a non-unique enhancement, as shown in \cite{rizzvdbnonunique}.
                We notice that this example carefully avoids any situation described above. As discussed in Example \ref{es:freegen}, $\mathbb{F}[t,t^{-1}]$ is a free generator because $\mathbb{F}$ is semisimple.
                However, it is not finite-dimensional, because, as seen in Example \ref{es:Kmod}, this will not work for a perfect field. 
                
                As one may expect from the viewpoint depicted in this article, the proof in \cite{rizzvdbnonunique} shows explicitly that $\mathbb{F}[t,t^{-1}]$ is not intrinsically formal by deforming the graded algebra into a different minimal $A_\infty$-algebra.
            \end{rem}
            
            \section{D-standardness and K-standardness}\label{sec:kdstandard}
            In this section we consider the notions of D-standard and K-standard categories introduced in \cite{chenye}, and show that for a very large class of examples they are, in fact, equivalent to strongly unique enhancement. 
            We emphasize that these results hold for $\ring$-linearity, where $\ring$ is any commutative ring.
            
            \begin{defn}\label{def:kdstandard}
                Let $\mathcal{E}$ be an exact category and consider $F: \derb(\mathcal{E}) \to \derb(\mathcal{E})$ a triangulated equivalence.
                Then $\mathcal{E}$ is \emph{D-standard} if the following implication holds: 
                \begin{itemize}
                    \item[($\spadesuit$)] Whenever $F(\mathcal{E}) \subseteq \mathcal{E}$ and $\eta_0: F_{\mid \mathcal{E}} \to \id_{\mathcal{E}}$ is a natural isomorphism, there exists a natural isomorphism $ \eta: F \to \id$ extending $\eta_0$.
                \end{itemize}
                An additive category $\mathcal{A}$ is \emph{K-standard} if it is D-standard as an exact category.
                \end{defn}

            \begin{lem}\label{lem:kstandard}
                Let $F:\derb(\mathcal{E})\to \derb(\mathcal{E})$, with $\mathcal{E}$ an exact category, be a triangulated equivalence. Then $(\spadesuit)$ holds if and only if the following is satisfied.
                \begin{itemize}
                    \item[$(\clubsuit)$] Whenever  $F(\mathcal{E})\subset \mathcal{E}$ and $F_{\mid \mathcal{E}}\cong \id_{\mathcal{E}}$, then $F \cong \id$.
                \end{itemize}
            \end{lem}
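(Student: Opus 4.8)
The plan is to prove the two implications separately. The direction $(\spadesuit)\Rightarrow(\clubsuit)$ is immediate: if $F(\mathcal{E})\subseteq\mathcal{E}$ and $F_{\mid\mathcal{E}}\cong\id_\mathcal{E}$, one picks any natural isomorphism $\eta_0:F_{\mid\mathcal{E}}\to\id_\mathcal{E}$, applies $(\spadesuit)$ to obtain an extension $\eta:F\to\id$, and concludes $F\cong\id$.

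For the converse $(\clubsuit)\Rightarrow(\spadesuit)$, assume $(\clubsuit)$ and suppose we are given $F$ with $F(\mathcal{E})\subseteq\mathcal{E}$ together with a natural isomorphism $\eta_0:F_{\mid\mathcal{E}}\to\id_\mathcal{E}$. In particular $F_{\mid\mathcal{E}}\cong\id_\mathcal{E}$, so $(\clubsuit)$ produces \emph{some} natural isomorphism $\theta:F\to\id_{\derb(\mathcal{E})}$; the point is to correct $\theta$ on $\mathcal{E}$ so that it agrees with $\eta_0$ there. First I would observe that, since the canonical functor $\mathcal{E}\to\derb(\mathcal{E})$ is fully faithful (a standard fact for bounded derived categories of exact categories, cf. Appendix \ref{app:derbex}) and $F$ maps $\mathcal{E}$ into $\mathcal{E}$, the restriction $\theta_{\mid\mathcal{E}}$ is a well-defined natural isomorphism $F_{\mid\mathcal{E}}\to\id_\mathcal{E}$ of endofunctors of $\mathcal{E}$. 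Consequently $\psi:=\eta_0\circ(\theta_{\mid\mathcal{E}})^{-1}$ is a natural \emph{automorphism} of $\id_\mathcal{E}$.

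The key step is then to extend $\psi$ to a natural automorphism $\widetilde{\psi}$ of $\id_{\derb(\mathcal{E})}$; granting this, $\eta:=\widetilde{\psi}\circ\theta:F\to\id$ is a natural isomorphism with $\eta_{\mid\mathcal{E}}=\psi\circ\theta_{\mid\mathcal{E}}=\eta_0$, which is exactly the conclusion of $(\spadesuit)$. To build $\widetilde{\psi}$ I would work first in $\kcomb(\mathcal{E})$: for a bounded complex $M=(M^i,d^i)$ over $\mathcal{E}$, the family $(\psi_{M^i})_i$ is a chain automorphism of $M$, because $d^i\circ\psi_{M^i}=\psi_{M^{i+1}}\circ d^i$ by naturality of $\psi$ applied to the differential $d^i$, which is a morphism of $\mathcal{E}$. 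This assignment commutes with chain maps and with homotopies (replace a homotopy $s$ by $\psi s$) and with the shift functor, so it defines a natural automorphism of $\id_{\kcomb(\mathcal{E})}$; since a natural transformation of the identity functor of any category descends, with unchanged components, to any localization, it passes to $\derb(\mathcal{E})=\kcomb(\mathcal{E})/\acb(\mathcal{E})$, giving the desired $\widetilde{\psi}$, which by construction restricts to $\psi$ on $\mathcal{E}$.

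The main obstacle I anticipate is precisely this extension step: making sure that the complexwise formula $(\psi_{M^i})_i$ is genuinely functorial and natural \emph{after} inverting the acyclic complexes, and that the restriction of $\theta$ to $\mathcal{E}$ makes sense — both of which hinge on the good behaviour of $\mathcal{E}$ inside $\derb(\mathcal{E})$ recalled in the appendix. Everything else is formal bookkeeping with natural transformations.
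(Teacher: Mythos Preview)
Your proposal is correct and follows essentially the same approach as the paper: both prove $(\clubsuit)\Rightarrow(\spadesuit)$ by taking an arbitrary natural isomorphism $\theta:F\to\id$ furnished by $(\clubsuit)$, forming the automorphism $\psi=\eta_0\circ(\theta_{\mid\mathcal{E}})^{-1}$ of $\id_\mathcal{E}$, extending it to $\derb(\mathcal{E})$, and composing with $\theta$. The paper just abbreviates your explicit extension step as $\derb(\eta_0\mu_{\mid\mathcal{E}}^{-1})$, using that $\derb(-)$ is functorial on natural transformations of exact functors, which is precisely the content of your componentwise construction.
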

            \begin{proof}
                This result is analogous to \cite[Lemma 3.5]{chenye}. The fact that $(\spadesuit)$ implies $(\clubsuit)$ is obvious. Conversely, let $\eta_0 : F_{\mid \mathcal{E}} \to \id_{\mathcal{E}}$. By $(\clubsuit)$, there exists $\mu: F \to \id$. In particular, $\mu_{\mid \mathcal{E}}:F_{\mid \mathcal{E}}\to \id_{\mathcal{E}}$, so we can consider $\derb(\eta_0\mu_{\mid \mathcal{E}}^{-1}) \mu : F \to \id$: by definition, such natural isomorphism restricted to $\mathcal{E}$ is $\eta_0$. This concludes the proof.
            \end{proof}

            \begin{prop}\label{prop:kfstandard}
                An additive category $\mathcal{A}$ is K-standard if and only if $\kcomb(\mathcal{A})$ has a strongly unique enhancement.
            \end{prop}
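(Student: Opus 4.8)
The plan is to reduce the claim, via Theorem~\ref{thm:main}, to the statement that an additive category $\mathcal{A}$ is K-standard if and only if it is formally standard when viewed as a graded category concentrated in degree $0$. Indeed, equipping $\mathcal{A}$ with the split exact structure identifies $\derb(\mathcal{A})$ with $\kcomb(\mathcal{A})$ (recall Example~\ref{es:dbexa}), and $\kcomb(\mathcal{A})\cong\tr(\mathcal{A})$ by Example~\ref{es:homder}. Since every $\ring$-linear category is triangulated formal by Proposition~\ref{prop:asf}, Theorem~\ref{thm:main} applied to $\mathsf{B}=\mathcal{A}$ says precisely that $\kcomb(\mathcal{A})$ has a strongly unique enhancement exactly when $\mathcal{A}$ is formally standard. (One could instead invoke Proposition~\ref{prop:kcomba} for uniqueness together with Proposition~\ref{prop:strongenh}, but citing the main theorem is cleanest.)

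Next I would translate both ``K-standard'' and ``formally standard'' into parallel elementary conditions on triangulated autoequivalences $F$ of $\kcomb(\mathcal{A})$. On the K-standard side, Lemma~\ref{lem:kstandard} reduces D-standardness for the split exact structure to condition $(\clubsuit)$: if $F(\mathcal{A})\subseteq\mathcal{A}$ and $F_{\mid\mathcal{A}}\cong\id_{\mathcal{A}}$, then $F\cong\id$. On the formally standard side, Remark~\ref{rem:gradrestid} reduces to: if $FX=X$ for $X\in\mathcal{A}$ and $F^{gr}_{\mid H^*(\mathcal{A})}\cong\id$, then $F\cong\id$. The bridge between the two is the observation that $\hom_{\kcomb(\mathcal{A})}(X,Y[i])=0$ for $X,Y\in\mathcal{A}$ and $i\neq 0$ (a chain map between complexes concentrated in different degrees vanishes), so that $H^*(\mathcal{A})=\mathcal{A}$ and, for $F$ fixing the objects of $\mathcal{A}$, the graded restriction $F^{gr}_{\mid H^*(\mathcal{A})}$ of Definition~\ref{def:fgrad} is literally the additive endofunctor $F_{\mid\mathcal{A}}\colon\mathcal{A}\to\mathcal{A}$, the structural shift isomorphism in degree $0$ being the identity.

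With this in hand the two implications are short. If $\mathcal{A}$ is formally standard and $F$ satisfies the hypotheses of $(\clubsuit)$, then $FX\cong X$ for $X\in\mathcal{A}$, so Proposition~\ref{prop:natfgrad} replaces $F$, up to natural isomorphism, by $G$ with $GX=X$ and $G^{gr}_{\mid H^*(\mathcal{A})}=G_{\mid\mathcal{A}}\cong\id$; formal standardness forces $G\cong\id$, hence $F\cong\id$, so $\mathcal{A}$ is K-standard. Conversely, if $\mathcal{A}$ is K-standard and $F$ satisfies $FX=X$ and $F^{gr}_{\mid H^*(\mathcal{A})}\cong\id$, then $F(\mathcal{A})\subseteq\mathcal{A}$ and $F_{\mid\mathcal{A}}\cong\id_{\mathcal{A}}$, so $(\clubsuit)$ gives $F\cong\id$ and $\mathcal{A}$ is formally standard.

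I expect the only delicate point to be the identification of the graded restriction with $F_{\mid\mathcal{A}}$ --- checking the vanishing of the higher $\hom$-groups and that the $0$-th structural shift isomorphism is the identity --- together with the bookkeeping of passing between ``$FX=X$ on objects'' and ``$F(\mathcal{A})\subseteq\mathcal{A}$ with $F_{\mid\mathcal{A}}\cong\id_{\mathcal{A}}$'' through Proposition~\ref{prop:natfgrad}. Everything else is formal manipulation of the results already established, mainly Theorem~\ref{thm:main}, Proposition~\ref{prop:asf} and Lemma~\ref{lem:kstandard}.
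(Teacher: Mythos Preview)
Your proposal is correct and follows essentially the same route as the paper: invoke Proposition~\ref{prop:asf} for triangulated formality, reduce via Theorem~\ref{thm:main} to the equivalence of K-standardness and formal standardness, and establish that equivalence by combining Lemma~\ref{lem:kstandard} with Remark~\ref{rem:gradrestid}. The paper's proof compresses your bridge argument (the vanishing of $\hom_{\kcomb(\mathcal{A})}(X,Y[i])$ for $i\neq 0$ and the use of Proposition~\ref{prop:natfgrad}) into a single sentence citing those two results, but the substance is the same.
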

            \begin{proof}
                We recall that $\mathcal{A}$ is triangulated formal by Proposition \ref{prop:asf}. Then Remark \ref{rem:gradrestid} and Lemma \ref{lem:kstandard} shows that $\mathcal{A}$ is K-standard if and only if it is formally standard. Theorem \ref{thm:main} concludes the proof.
            \end{proof}
            \begin{es}
                A Krull-Schmidt additive category $\mathcal{A}$ is an \emph{Orlov category} if 
                \begin{enumerate}
                    \item The endomorphism ring of each indecomposable is a division ring;
                    \item There is a degree function $\deg : \operatorname{ind}(\mathcal{A}) \to \mathbb{Z}$, where $\operatorname{ind}(\mathcal{A})$ is the set of all indecomposables, such that $\deg X \le \deg Y$ implies $\hom(X,Y)=0$ whenever $X \not \cong Y$.
                \end{enumerate}
                As proved in \cite[Proposition 4.6]{chenye}, 
                an Orlov category $\mathcal{A}$ is K-standard, so $\kcomb(\mathcal{A})$ has a strongly unique enhancement.
            \end{es}

            \begin{lem}\label{lem:dstandard}
                Let $\mathcal{E}$ be an exact category, and let $\mathcal{E}_{\dg}:= \derb_{\dg}(\mathcal{E})_{\mid \mathcal{E}}$ be the DG-category as in Example \ref{es:lifted}. Then $\mathcal{E}$ is D-standard if and only if $\mathcal{E}_{\dg}$ is formally standard.
            \end{lem}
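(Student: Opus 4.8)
The plan is to rewrite both sides in terms of triangulated autoequivalences of $\derb(\mathcal{E})$ and compare them via results already established. Throughout I identify $\tr(\mathcal{E}_{\dg})$ with $\derb(\mathcal{E})$ and $H^0(\mathcal{E}_{\dg})$ with $\mathcal{E}$ (recall Corollary \ref{cor:dbunique}), so that $H^*(\mathcal{E}_{\dg})$ is the graded category with $\hom$-groups $\bigoplus_i\hom_{\derb(\mathcal{E})}(X,Y[i])$. I use three reformulations: by Lemma \ref{lem:kstandard}, $\mathcal{E}$ is D-standard if and only if condition $(\clubsuit)$ holds; by Remark \ref{rem:gradrestid}, $\mathcal{E}_{\dg}$ is formally standard if and only if every triangulated autoequivalence $F$ of $\derb(\mathcal{E})$ with $FX=X$ for all $X\in\mathcal{E}$ and $F_{\mid H^*(\mathcal{E}_{\dg})}^{gr}\cong\id$ satisfies $F\cong\id$; and, crucially, by Proposition \ref{prop:0detgr}, for such an $F$ the graded restriction $F_{\mid H^*(\mathcal{E}_{\dg})}^{gr}$ is determined up to graded natural isomorphism by the exact autoequivalence $F_{\mid\mathcal{E}}$, which is its degree-$0$ component; hence $F_{\mid H^*(\mathcal{E}_{\dg})}^{gr}\cong\id$ holds if and only if $F_{\mid\mathcal{E}}\cong\id_\mathcal{E}$. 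With these in hand, both implications are short.

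Assume $\mathcal{E}$ is D-standard, i.e. $(\clubsuit)$ holds. Given a triangulated autoequivalence $F$ of $\derb(\mathcal{E})$ with $FX=X$ for all $X\in\mathcal{E}$ and $F_{\mid H^*(\mathcal{E}_{\dg})}^{gr}\cong\id$, we have $F(\mathcal{E})\subseteq\mathcal{E}$, and by the last reformulation $F_{\mid\mathcal{E}}\cong\id_\mathcal{E}$; hence $(\clubsuit)$ gives $F\cong\id$. By Remark \ref{rem:gradrestid}, $\mathcal{E}_{\dg}$ is formally standard.

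Conversely, assume $\mathcal{E}_{\dg}$ is formally standard; by Lemma \ref{lem:kstandard} it suffices to verify $(\clubsuit)$. Let $F:\derb(\mathcal{E})\to\derb(\mathcal{E})$ be a triangulated equivalence with $F(\mathcal{E})\subseteq\mathcal{E}$ and $F_{\mid\mathcal{E}}\cong\id_\mathcal{E}$. Then $FX\cong X$ for every $X\in\mathcal{E}$, so by Proposition \ref{prop:natfgrad} (applied with $\mathcal{S}=\mathcal{E}$) there is a triangulated equivalence $F'\cong F$ with $F'X=X$ for all $X\in\mathcal{E}$. Since $F'\cong F$ we still have $F'_{\mid\mathcal{E}}\cong\id_\mathcal{E}$, hence $(F')_{\mid H^*(\mathcal{E}_{\dg})}^{gr}\cong\id$ by the reformulation above; formal standardness (via Remark \ref{rem:gradrestid}) then yields $F'\cong\id$, and therefore $F\cong\id$. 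This proves $(\clubsuit)$, so $\mathcal{E}$ is D-standard.

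The only substantive point is the identification of $F_{\mid H^*(\mathcal{E}_{\dg})}^{gr}$ with the data of $F_{\mid\mathcal{E}}$, that is, Proposition \ref{prop:0detgr}: this is exactly what makes $\mathcal{E}_{\dg}$ lifted (Example \ref{es:lifted}) and is the bridge between the triangulated statement $(\clubsuit)$ and the graded-categorical notion of formal standardness. Everything else is bookkeeping with the cited reformulations and with Proposition \ref{prop:natfgrad} to pass between ``$FX\cong X$'' and ``$FX=X$''.
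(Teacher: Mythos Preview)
Your proof is correct and follows essentially the same route as the paper's: both reformulate D-standardness via Lemma~\ref{lem:kstandard} (condition $(\clubsuit)$) together with Proposition~\ref{prop:natfgrad}, reformulate formal standardness via Remark~\ref{rem:gradrestid}, and then bridge the two using Proposition~\ref{prop:0detgr} to identify the condition $F_{\mid\mathcal{E}}\cong\id_\mathcal{E}$ with $F_{\mid H^*(\mathcal{E}_{\dg})}^{gr}\cong\id$. Your write-up is simply more explicit, treating the two implications separately and spelling out where Proposition~\ref{prop:natfgrad} is invoked to normalize $FX\cong X$ to $FX=X$.
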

            \begin{proof}
                By Lemma \ref{lem:kstandard} and Proposition \ref{prop:natfgrad}, $\mathcal{E}$ is D-standard if and only if any triangulated equivalence $F$ such that $F(X)= X$ for $X \in \mathcal{E}$ and $F_{\mid \mathcal{E}}\cong {\id}_{\mathcal{E}}$ is naturally isomorphic to the identity. By Remark \ref{rem:gradrestid}, we are reduced to prove that $F_{\mid \mathcal{E}}\cong \id_{\mathcal{E}}$ if and only if $F_{\mid H^*(\mathcal{E}_{\dg})}^{gr} \cong \id_{H^*(\mathcal{E}_{\dg})}$. This follows from Proposition \ref{prop:0detgr}. 
            \end{proof}

            We now aim to show the analogous of Proposition \ref{prop:kfstandard} for derived categories.

            \begin{prop}\label{prop:dstandtrunc}
                Let $\mathcal{E}$ be an exact category, and consider $F$ a triangulated autoequivalence of $\derb(\mathcal{E})$ such that $F(\mathcal{E}) \subset \mathcal{E}$ and $F_{\mid \mathcal{E}} \cong \id_{\mathcal{E}}$. Let $(\mathsf{C},E)$ be any enhancement of $\derb(\mathcal{E})$. If $F$ has a $(\mathsf{C},E)$-lift, then $F$ is naturally isomorphic to the identity.
            
                Moreover, the identity of $\mathsf{C}$ is the only quasi-functor lifting the identity of $\derb(\mathcal{E})$. Consequently, any autoequivalence of $\derb(\mathcal{E})$ has at most one $(\mathsf{C},E)$-lift.
            \end{prop}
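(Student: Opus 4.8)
The plan is to strip the lift down to the subcategory $\mathcal{E}$, where it becomes trivial, and then reconstruct it from the DG-quotient presentation of $\mathsf{C}$ furnished by Corollary~\ref{cor:dbunique}. Throughout I may assume $\mathsf{C}$ is strongly pretriangulated, replacing it if necessary by $\mathsf{C}^{\pretr}$ and transferring the lift along the Yoneda quasi-equivalence; this lets Proposition~\ref{prop:dgextension} and Remark~\ref{rem:truncation} apply. Note first that a $(\mathsf{C},E)$-lift $\mathsf{f}\colon\mathsf{C}\to\mathsf{C}$ of the autoequivalence $F$ is automatically a quasi-equivalence, since $H^{0}(\mathsf{f})\cong E^{-1}FE$ is an equivalence and $\mathsf{C}$ is pretriangulated (Remark~\ref{rem:h0qespretr}).

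Next I would restrict $\mathsf{f}$ to $\mathcal{E}$. Because $F(\mathcal{E})\subseteq\mathcal{E}$, for every object $X$ of $\mathsf{C}_{\mid\mathcal{E}}$ one has $E\,H^{0}(\mathsf{f})(X)\cong F(EX)\in\mathcal{E}$, so $H^{0}(\mathsf{f})(X)$ again lies in $\mathsf{C}_{\mid\mathcal{E}}$; as $\mathsf{C}_{\mid\mathcal{E}}$ is closed under homotopy equivalence (Lemma~\ref{lem:restrictdg}), exactly the argument used in the proof of Proposition~\ref{prop:dcatra} produces a quasi-functor $\mathsf{f}_{0}\colon\mathsf{C}_{\mid\mathcal{E}}\to\mathsf{C}_{\mid\mathcal{E}}$ with $\mathsf{f}\,\iota\cong\iota\,\mathsf{f}_{0}$, where $\iota\colon\mathsf{C}_{\mid\mathcal{E}}\hookrightarrow\mathsf{C}$ is the inclusion. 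Under the equivalence $H^{0}(\mathsf{C}_{\mid\mathcal{E}})\cong\mathcal{E}$ induced by $E$, the functor $H^{0}(\mathsf{f}_{0})$ is identified with $F_{\mid\mathcal{E}}\cong\id_{\mathcal{E}}$. Now I pass to the truncation: since objects of $\mathcal{E}$ have no negative self-extensions in $\derb(\mathcal{E})$, each complex $\hom_{\mathsf{C}_{\mid\mathcal{E}}}(X,Y)$ has cohomology vanishing in negative degrees, so $\tau_{\le 0}\mathsf{C}_{\mid\mathcal{E}}$ has cohomology concentrated in degree $0$ and the natural DG-functor $\tau_{\le 0}\mathsf{C}_{\mid\mathcal{E}}\to H^{0}(\mathsf{C}_{\mid\mathcal{E}})\cong\mathcal{E}$ of Remark~\ref{rem:lefttruncation} is a quasi-equivalence, precisely as in the proof of Proposition~\ref{prop:asf}. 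The truncation sends quasi-equivalences to quasi-equivalences, so (applying Remark~\ref{rem:truncation} to the zig-zag defining $\mathsf{f}_{0}$) there is a quasi-functor $\tau_{\le 0}\mathsf{f}_{0}\colon\tau_{\le 0}\mathsf{C}_{\mid\mathcal{E}}\to\tau_{\le 0}\mathsf{C}_{\mid\mathcal{E}}$ with $\mathsf{f}_{0}\,\mathsf{p}_{\le 0}=\mathsf{p}_{\le 0}\,\tau_{\le 0}\mathsf{f}_{0}$.

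Since the source and target of $\tau_{\le 0}\mathsf{f}_{0}$ are quasi-equivalent to the ordinary category $\mathcal{E}$, while $H^{0}(\tau_{\le 0}\mathsf{f}_{0})=H^{0}(\mathsf{f}_{0})\cong\id$, one concludes $\tau_{\le 0}\mathsf{f}_{0}\cong\id$. I expect this to be the main technical point: one must make precise that a self-quasi-functor of a DG-category quasi-equivalent to an ordinary category is determined up to isomorphism by its effect on $H^{0}$, so that the triviality of $F_{\mid\mathcal{E}}$ genuinely propagates through the truncation zig-zag. Granting it, I would reassemble as follows. By Corollary~\ref{cor:dbunique} the extension $\mathsf{q}\colon(\tau_{\le 0}\mathsf{C}_{\mid\mathcal{E}})^{\pretr}\to\mathsf{C}$ of $\iota\,\mathsf{p}_{\le 0}$ (Proposition~\ref{prop:dgextension}) is a DG-quotient, say by the full DG-subcategory $\mathsf{N}$ of objects sent to $0$. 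The composite $\mathsf{f}\,\mathsf{q}$ still annihilates $\mathsf{N}$ (apply $\mathsf{f}$ to the zero quasi-functor $\mathsf{N}\to\mathsf{C}$ and use Remark~\ref{rem:0qf}), and precomposing $\mathsf{f}\,\mathsf{q}$ with the Yoneda embedding gives $\mathsf{f}\,\iota\,\mathsf{p}_{\le 0}=\iota\,\mathsf{p}_{\le 0}\,\tau_{\le 0}\mathsf{f}_{0}\cong\iota\,\mathsf{p}_{\le 0}$, which is precisely the restriction of $\mathsf{q}$ itself; by the fully-faithful part of Proposition~\ref{prop:dgextension} this forces $\mathsf{f}\,\mathsf{q}\cong\mathsf{q}$, and then the universal property of the DG-quotient (Definition/Proposition~\ref{defp:dgquotient}, applied with target $\mathsf{C}$) yields $\mathsf{f}\cong\id_{\mathsf{C}}$, whence $F\cong E\,H^{0}(\mathsf{f})\,E^{-1}\cong\id$.

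The two remaining assertions then follow formally. The identity $\id_{\derb(\mathcal{E})}$ trivially satisfies the hypotheses of the first part, so every quasi-functor lifting it is isomorphic to $\id_{\mathsf{C}}$, which is the claimed uniqueness. Consequently, if $\mathsf{f}_{1}$ and $\mathsf{f}_{2}$ are $(\mathsf{C},E)$-lifts of the same autoequivalence $G$ — both quasi-equivalences by the remark above — then $\mathsf{f}_{2}^{-1}\mathsf{f}_{1}$ is a $(\mathsf{C},E)$-lift of $\id_{\derb(\mathcal{E})}$, so $\mathsf{f}_{2}^{-1}\mathsf{f}_{1}\cong\id_{\mathsf{C}}$ and therefore $\mathsf{f}_{1}\cong\mathsf{f}_{2}$.
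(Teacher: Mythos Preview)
Your strategy coincides with the paper's: restrict the lift to $\mathsf{C}_{\mid\mathcal{E}}$, pass through the truncation $\tau_{\le 0}\mathsf{C}_{\mid\mathcal{E}}\to H^{0}(\mathsf{C}_{\mid\mathcal{E}})\cong\mathcal{E}$, observe that the induced self-map of $\mathcal{E}$ is the identity, and then use the DG-quotient presentation of $\mathsf{C}$ from Corollary~\ref{cor:dbunique} to conclude $\mathsf{f}\cong\id$. The deduction of the last two assertions is also the intended one.

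The one genuine difference is how the paper disposes of what you call ``the main technical point.'' Rather than working with a quasi-functor $\mathsf{f}$ and arguing abstractly that a self-quasi-functor of a DG-category quasi-equivalent to an ordinary category is determined by its $H^{0}$, the paper first replaces $\mathsf{C}$ by a \emph{cofibrant} enhancement (Fact~\ref{fact:cofibrant}), so that the lift may be taken to be an honest DG-functor $\mathsf{f}\colon\mathsf{C}\to\mathsf{C}$. Then the restriction $\mathsf{f}_{\mid\mathcal{E}}\colon\mathsf{C}_{\mid\mathcal{E}}\to\mathsf{C}_{\mid\mathcal{E}}$ is again a DG-functor, the truncation square of Remark~\ref{rem:truncation} commutes on the nose, and the left vertical in the three-column diagram is literally $H^{0}(\mathsf{f}_{\mid\mathcal{E}})\cong\id$. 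One then takes pretriangulated closures and applies the DG-quotient universal property directly, without ever needing the quasi-functor version of Proposition~\ref{prop:dgextension} that you invoke. This is worth noting because Proposition~\ref{prop:dgextension} is stated for DG-functors and DG-natural transformations; its extension to quasi-functors is true but requires an extra argument (e.g.\ via Morita invariance of the internal hom), which cofibrancy lets you sidestep entirely.
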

            
             \begin{proof}
                Without loss of generality, assume $\mathsf{C}$ is a cofibrant DG-category. By Fact \ref{fact:cofibrant}, there exists a DG-functor $\mathsf{f}: \mathsf{C} \to \mathsf{C}$ which is a $(\mathsf{C},E)$-lift of $F$.
                We now consider $\mathsf{f}_{\mid \mathcal{E}} : \mathsf{C}_{\mid \mathcal{E}} \to \mathsf{C}_{\mid \mathcal{E}}$. Defined the quasi-equivalence quasi-functor 
                \[
                             \begin{tikzcd}(\mathsf{C}_{\mid \mathcal{E}})^{\pretr} \ar[r,"\mathsf{j}"] & \mathsf{C}^{\pretr} & 
                             \mathsf{C},\ar[l, hook', "\mathsf{y}" above]
                             \end{tikzcd}    
                             \]
                             where $\mathsf{j}$ is induced by inclusion, notice that $\mathsf{y} \mathsf{f}\cong \mathsf{f}^{\pretr} \mathsf{y}$ and  $\mathsf{j}(\mathsf{f}_{\mid \mathcal{E}})^{\pretr}\cong \mathsf{f}^{\pretr} \mathsf{j}$ by Proposition \ref{prop:dgextension}. 
                             
                             Moreover, we are able to construct the following commutative diagram
                             \[
                             \begin{tikzcd}
                                H^0(\mathsf{C}_{\mid \mathcal{E}}) \ar[d,"\id"] & \tau_{\le 0}\mathsf{C}_{\mid \mathcal{E}} \ar[l,"\simeq"] \ar[r,"\mathsf{p}_{\le 0}"]\ar[d,"\tau_{\le 0}\mathsf{f}_{\mid \mathcal{E}}"] & \mathsf{C}_{\mid \mathcal{E}}\ar[d,"\mathsf{f}_{\mid \mathcal{E}}"]\\
                                H^0(\mathsf{C}_{\mid \mathcal{E}}) & \tau_{\le 0}\mathsf{C}_{\mid \mathcal{E}} \ar[l,"\simeq"] \ar[r,"\mathsf{p}_{\le 0}"] & \mathsf{C}_{\mid \mathcal{E}}
                             \end{tikzcd}   
                             \]
                             by assumption (indeed $H^0(\mathsf{f}_{\mid \mathcal{E}})= F_{\mid \mathcal{E}}\cong \id$). The commutative diagram obtained by taking the pretriangulated closures shows that $(\mathsf{f}_{\mid \mathcal{E}})^{\pretr}$ is the identity quasi-functor by the universal property of the DG-quotient (see item 1 of Definition/Proposition \ref{defp:dgquotient} and Corollary \ref{cor:dbunique}). This suffices to show that $\mathsf{f}$ is the identity as well because $\mathsf{f} \mathsf{y}^{-1} \mathsf{j}\cong \mathsf{y}^{-1} \mathsf{j}(\mathsf{f}_{\mid \mathcal{E}})^{\pretr}\cong \mathsf{y}^{-1} \mathsf{j}$, where $\mathsf{j}$ and $\mathsf{y}$ are quasi-equivalences. In particular, $F\cong E H^0(\mathsf{f})E^{-1} \cong EE^{-1} \cong \id$, as wanted.
             \end{proof}

\begin{thm}\label{thm:exdstand} Let $\mathcal{E}$ be an exact category. Then $\mathcal{E}$ is D-standard if and only if $\derb(\mathcal{E})$ has a strongly unique enhancement.
\end{thm}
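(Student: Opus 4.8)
The plan is to assemble results already established in the paper; the crucial observation is that $\derb(\mathcal{E})$ always has a (semi-strongly) unique enhancement by Proposition \ref{prop:derbssu}, so the only remaining obstruction to \emph{strong} uniqueness is the formal standardness of the DG-category $\mathcal{E}_{\dg} := \derb_{\dg}(\mathcal{E})_{\mid \mathcal{E}}$, which by Lemma \ref{lem:dstandard} is exactly D-standardness of $\mathcal{E}$.

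For the implication ``$\mathcal{E}$ D-standard $\Rightarrow$ $\derb(\mathcal{E})$ has a strongly unique enhancement'' I would argue as follows. Assume $\mathcal{E}$ is D-standard. By Lemma \ref{lem:dstandard}, $\mathcal{E}_{\dg}$ is formally standard. By Corollary \ref{cor:dbunique}, $\mathcal{E}_{\dg}$ is triangulated formal and $\tr(\mathcal{E}_{\dg}) \cong \derb(\mathcal{E})$, and by Example \ref{es:lifted} it is lifted. Thus all three hypotheses of Proposition \ref{prop:sffssue} are satisfied, and that proposition gives that $\tr(\mathcal{E}_{\dg}) \cong \derb(\mathcal{E})$ has a strongly unique enhancement.

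For the converse, assume $\derb(\mathcal{E})$ has a strongly unique enhancement; I would verify condition $(\clubsuit)$ of Lemma \ref{lem:kstandard}, which by that lemma is equivalent to $(\spadesuit)$, i.e. to D-standardness. So let $F\colon \derb(\mathcal{E}) \to \derb(\mathcal{E})$ be a triangulated equivalence with $F(\mathcal{E}) \subseteq \mathcal{E}$ and $F_{\mid \mathcal{E}} \cong \id_{\mathcal{E}}$; the goal is $F \cong \id$. Since $\derb(\mathcal{E})$ has a unique enhancement (Proposition \ref{prop:derbssu}), Proposition \ref{prop:strongenh} applies and, for any fixed enhancement $(\mathsf{C},E)$ of $\derb(\mathcal{E})$, produces a $(\mathsf{C},E)$-lift of $F$. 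Then Proposition \ref{prop:dstandtrunc} forces $F \cong \id$, so $(\clubsuit)$ holds and $\mathcal{E}$ is D-standard.

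Since the substantive work is done in Propositions \ref{prop:sffssue}, \ref{prop:dstandtrunc}, \ref{prop:derbssu}, Corollary \ref{cor:dbunique}, and Lemmas \ref{lem:kstandard}, \ref{lem:dstandard}, there is no real obstacle here beyond making sure the hypotheses line up. The one point worth a second look is the converse: one must check that the $F$ arising in $(\clubsuit)$ is exactly of the shape required by Proposition \ref{prop:dstandtrunc} (namely $F(\mathcal{E}) \subseteq \mathcal{E}$ and $F_{\mid \mathcal{E}} \cong \id_{\mathcal{E}}$) and that strong uniqueness genuinely supplies the needed $(\mathsf{C},E)$-lift through Proposition \ref{prop:strongenh} — both of which are immediate.
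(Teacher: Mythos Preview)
Your proposal is correct and follows essentially the same route as the paper's proof: the forward direction combines Corollary \ref{cor:dbunique}, Lemma \ref{lem:dstandard}, Example \ref{es:lifted}, and Proposition \ref{prop:sffssue} exactly as the paper does, and the converse is the paper's ``Proposition \ref{prop:dstandtrunc} $+$ Proposition \ref{prop:strongenh}'' argument, with the reduction to $(\clubsuit)$ via Lemma \ref{lem:kstandard} made explicit. The only superfluous step is your appeal to Proposition \ref{prop:derbssu} in the converse: once you assume $\derb(\mathcal{E})$ has a \emph{strongly} unique enhancement, uniqueness is automatic, so Proposition \ref{prop:strongenh} applies without further input.
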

\begin{proof}
    Recall that $\mathcal{E}_{\dg}$ is triangulated formal by Corollary \ref{cor:dbunique}. If $\mathcal{E}$ is D-standard, by Lemma \ref{lem:dstandard} $\mathcal{E}_{\dg}$ is also formally standard. Proposition \ref{prop:sffssue} shows that $\derb(\mathcal{E})$ has a strongly unique enhancement, since $\mathcal{E}_{\dg}$ is always lifted (see Example \ref{es:lifted}). The converse implication follows from Proposition \ref{prop:dstandtrunc} and Proposition \ref{prop:strongenh}.
\end{proof}

                
            
            \begin{prop}\label{prop:stronguniquedstand}
                Let $\mathcal{A}$ be an abelian category with enough projective objects. We denote with $\operatorname{Proj}(\mathcal{A})$ its subcategory of projective objects. If $\kcomb(\operatorname{Proj}(\mathcal{A}))$ has a strongly unique enhancement, then $\derb(\mathcal{A})$ has a strongly unique enhancement.
            \end{prop}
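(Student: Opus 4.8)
The plan is to run the argument through the two characterisations already established. By Theorem~\ref{thm:exdstand} the category $\derb(\mathcal{A})$ has a strongly unique enhancement if and only if $\mathcal{A}$ is D-standard, and by Proposition~\ref{prop:kfstandard} the hypothesis that $\kcomb(\operatorname{Proj}(\mathcal{A}))$ has a strongly unique enhancement is exactly the assertion that $\operatorname{Proj}(\mathcal{A})$ is K-standard. So the statement to prove reduces to: \emph{if $\operatorname{Proj}(\mathcal{A})$ is K-standard, then $\mathcal{A}$ is D-standard}. By Lemma~\ref{lem:kstandard} it suffices to fix a triangulated autoequivalence $F\colon\derb(\mathcal{A})\to\derb(\mathcal{A})$ with $F(\mathcal{A})\subseteq\mathcal{A}$ and $F_{\mid\mathcal{A}}\cong\id_{\mathcal{A}}$ and to produce a natural isomorphism $F\cong\id$; using Proposition~\ref{prop:natfgrad} I may assume outright that $F$ fixes every object and every morphism of $\mathcal{A}$.

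The first step is to trivialise $F$ on the perfect part. Let $\mathcal{K}\subseteq\derb(\mathcal{A})$ be the essential image of the canonical fully faithful functor $\kcomb(\operatorname{Proj}(\mathcal{A}))\hookrightarrow\derb(\mathcal{A})$. Since $F$ fixes $\operatorname{Proj}(\mathcal{A})$ pointwise, it maps $\mathcal{K}$ into itself; as $F_{\mid\mathcal{A}}$ is an equivalence of $\mathcal{A}$, the quasi-inverse $F^{-1}$ also preserves $\mathcal{A}$, restricts there to something isomorphic to the identity, hence also preserves $\operatorname{Proj}(\mathcal{A})$ and $\mathcal{K}$. Therefore $F$ restricts to a triangulated autoequivalence $\bar F$ of $\mathcal{K}$ with $\bar F_{\mid\operatorname{Proj}(\mathcal{A})}=\id$. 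Since $\kcomb(\operatorname{Proj}(\mathcal{A}))\cong\mathcal{K}$ has a strongly unique enhancement, $\operatorname{Proj}(\mathcal{A})$ is K-standard, and applying the defining implication $(\spadesuit)$ with the identity natural isomorphism on $\operatorname{Proj}(\mathcal{A})$ yields a natural isomorphism $\bar F\to\id_{\mathcal{K}}$ that restricts to the identity on $\operatorname{Proj}(\mathcal{A})$; in particular it is compatible with the trivialisation of $F$ on $\mathcal{A}$.

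The second step is a dévissage along projective resolutions, where ``enough projectives'' is used. Given $X\in\derb(\mathcal{A})$ with cohomology concentrated in degrees $[a,b]$, $b>a$, pick an epimorphism $\pi\colon P\twoheadrightarrow H^b(X)$ with $P$ projective; since $\hom_{\derb(\mathcal{A})}(P[-b],X)\cong\hom_{\mathcal{A}}(P,H^b(X))$ by projectivity of $P$, this produces a morphism $\phi\colon P[-b]\to X$ whose cone $C:=\cone(\phi)$ has cohomology concentrated in degrees $[a,b-1]$, of strictly smaller amplitude, and which sits in a distinguished triangle $P[-b]\to X\to C\to P[-b+1]$. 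Since $P[-b],P[-b+1]\in\mathcal{K}$, on which $F$ has already been trivialised compatibly with $\mathcal{A}$, an induction on the cohomological amplitude of $X$ — base case $X$ a shift of an object of $\mathcal{A}$, where $F=\id$ — yields the desired natural isomorphism $F\cong\id$ on all of $\derb(\mathcal{A})$. This propagation is the argument for D-standardness in \cite{chenye}, which carries over to an arbitrary abelian category with enough projectives; one then concludes by Theorem~\ref{thm:exdstand}.

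The main obstacle is precisely this last step. The triangle above together with the inductive hypothesis immediately give pointwise isomorphisms $F(X)\cong X$, but upgrading them to a single \emph{natural} isomorphism requires choosing the comparison morphism on $X$ compatibly with both $\phi$ and the connecting map $C\to P[-b+1]$, and checking that these choices assemble coherently over all of $\derb(\mathcal{A})$ — the delicate bookkeeping carried out in \cite{chenye}. Alternatively, one may recast the second step as the construction of a $(\mathsf{C},E)$-lift of $F$ for a suitable enhancement $(\mathsf{C},E)$ of $\derb(\mathcal{A})$ (built from the lift of $\bar F$ on an enhancement of $\mathcal{K}$ provided by strong uniqueness of $\kcomb(\operatorname{Proj}(\mathcal{A}))$) and then invoke Proposition~\ref{prop:dstandtrunc}, which forces $F\cong\id$ directly.
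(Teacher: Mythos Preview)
Your approach is essentially the same as the paper's. The paper's proof is a single line: it combines Proposition~\ref{prop:kfstandard}, \cite[Theorem 6.1]{chenye}, and Theorem~\ref{thm:exdstand}, i.e.\ exactly your reduction ``$\kcomb(\operatorname{Proj}(\mathcal{A}))$ strongly unique $\Leftrightarrow$ $\operatorname{Proj}(\mathcal{A})$ K-standard $\Rightarrow$ $\mathcal{A}$ D-standard $\Leftrightarrow$ $\derb(\mathcal{A})$ strongly unique''. The middle implication is precisely \cite[Theorem 6.1]{chenye}; you attempt to sketch its proof via d\'evissage on cohomological amplitude, correctly identify the naturality bookkeeping as the delicate point, and then defer to \cite{chenye} for it anyway --- so in the end your argument and the paper's coincide.
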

            \begin{proof}
            It immediately follows from Proposition \ref{prop:kfstandard}, \cite[Theorem 6.1]{chenye} and Theorem \ref{thm:exdstand}.
            \end{proof}


            We now state some examples obtained by applying Theorem \ref{thm:exdstand}.
            \begin{cor}\label{cor:hereditary}
                Let $\mathcal{A}$ be a hereditary abelian category (i.e. $\ext^i=0$ for $i>1$). Then its bounded derived category $\derb(\mathcal{A})$ has a strongly unique enhancement.
                (cf. \cite[Corollary 5.6]{chenye}).
            \end{cor}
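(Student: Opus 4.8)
The plan is to invoke Theorem~\ref{thm:exdstand}, which reduces the claim to showing that a hereditary abelian category $\mathcal{A}$ is D-standard. By Lemma~\ref{lem:kstandard} it is enough to verify $(\clubsuit)$: every triangulated autoequivalence $F\colon\derb(\mathcal{A})\to\derb(\mathcal{A})$ with $F(\mathcal{A})\subseteq\mathcal{A}$ and $F_{\mid \mathcal{A}}\cong\id_{\mathcal{A}}$ is naturally isomorphic to $\id$. So I would fix such an $F$, a natural isomorphism $\eta_0\colon F_{\mid \mathcal{A}}\to\id_{\mathcal{A}}$, and, since $F$ is triangulated, assume $F$ commutes strictly with the shift.

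First I would record the two structural inputs provided by heredity. Since $\ext^i_{\mathcal{A}}=0$ for $i>1$, the truncation triangle $\tau_{\le n-1}X\to\tau_{\le n}X\to H^n(X)[-n]\to(\tau_{\le n-1}X)[1]$ has vanishing connecting morphism, because $\hom_{\derb(\mathcal{A})}$ between objects whose nonzero cohomologies lie in ranges separated by at least two degrees is zero; inducting on the cohomological amplitude yields $X\cong\bigoplus_{n}H^n(X)[-n]$ for every $X\in\derb(\mathcal{A})$. In particular, up to isomorphism $\derb(\mathcal{A})$ is generated under finite direct sums by the objects $A[n]$ ($A\in\mathcal{A}$, $n\in\mathbb{Z}$), and among these the only nonzero morphism spaces are $\hom_{\derb(\mathcal{A})}(A[n],B[n])\cong\hom_{\mathcal{A}}(A,B)$ and $\hom_{\derb(\mathcal{A})}(A[n],B[n+1])\cong\ext^1_{\mathcal{A}}(A,B)$.

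Next I would show that $F$ acts as the identity on these morphism spaces after transporting along $\eta_0$. Because $F$ is an autoequivalence with $F(\mathcal{A})\subseteq\mathcal{A}$ and $F_{\mid \mathcal{A}}$ an equivalence, $F$ preserves the standard $t$-structure, hence takes the triangle attached to a short exact sequence $0\to B\to E\to A\to 0$ in $\mathcal{A}$ to the triangle attached to $0\to FB\to FE\to FA\to 0$; naturality of $\eta_0$ provides an isomorphism of these two short exact sequences, so the two extension classes agree under $\eta_0$, i.e. $F$ fixes $\ext^1_{\mathcal{A}}(A,B)$. The statement for $\hom_{\mathcal{A}}$ is just naturality of $\eta_0$. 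Consequently $\eta_{A[n]}:=\eta_{0,A}[n]$ defines a natural isomorphism on the full subcategory of objects of the form $A[n]$; since $F$ is additive this extends uniquely over finite direct sums, and since every object of $\derb(\mathcal{A})$ is isomorphic to such a direct sum one sets $\eta_X:=\phi^{-1}\eta_{X'}F(\phi)$ for any isomorphism $\phi\colon X\to X'$ with $X'$ a direct sum of shifts --- this is independent of $\phi$ and natural precisely because $\eta$ is already natural on direct sums of shifts. Thus $F\cong\id$, $\mathcal{A}$ is D-standard, and Theorem~\ref{thm:exdstand} concludes.

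The main obstacle is the upgrade from the objectwise isomorphism $FX\cong X$ (immediate from the cohomology splitting and $F_{\mid \mathcal{A}}\cong\id$) to a \emph{natural} one. Heredity is what makes this possible: it supplies the functorially convenient decomposition $X\cong\bigoplus_n H^n(X)[-n]$ and it forces all $\hom$-groups in $\derb(\mathcal{A})$ to be concentrated in Ext-degrees $0$ and $1$, both of which are controlled inside $\mathcal{A}$ by $\eta_0$. The one genuinely non-formal point is verifying that $F$ fixes $\ext^1$-classes, which rests on $t$-structure preservation and on the naturality square of $\eta_0$ applied to extensions; the subsequent assembly of $\eta$ is routine.
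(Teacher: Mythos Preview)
Your proof is correct and follows the same strategy as the paper: reduce to D-standardness of $\mathcal{A}$ and invoke Theorem~\ref{thm:exdstand}. The paper simply cites \cite[Corollary~5.6]{chenye} for the D-standardness of hereditary abelian categories, whereas you supply a direct argument via the decomposition $X\cong\bigoplus_n H^n(X)[-n]$ and the verification that $\eta_0$ controls both $\hom$ and $\ext^1$; this is essentially the Chen--Ye proof spelled out, so there is no genuine divergence.
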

            \begin{es}
                Let $R$ be any ring (recall Convention \ref{conv:field}). If $R$ is (right) hereditary, the bounded derived category of (all right) $R$-modules has a strongly unique enhancement. If $R$ is (right) semihereditary and Noetherian, the bounded derived category of finitely generated $R$-modules has a strongly unique enhancement. Dually, the result holds for left modules.
            \end{es}

            To prove that bounded derived categories of smooth projective varieties have a strongly unique enhancement, Lunts and Orlov in fact showed D-standardness using the notion of ample sequence \cite{luntsorlov}. Here we consider a generalization due to Canonaco and Stellari \cite[Definition 2.9]{suppcanste}.
            \begin{defn}
                Given an abelian category $\mathcal{A}$ and a set $I$, we say that $\lbrace P_i \rbrace_{i \in I}\subset \mathcal{A}$ is an \emph{almost ample set} if, for any $A\in \mathcal{A}$, there exists $i \in I$ such that
                \begin{enumerate}
                    \item There is a natural number $k$ such that $P_i^{\oplus k} \to A$ is an epimorphism;
                    \item $\hom(A,P_i)=0$.
                \end{enumerate}
            \end{defn}

            \begin{es} Given an algebraic space $X$ proper over an Artinian ring with depth $\ge 1$
                at every closed point, the category of coherent sheaves $\operatorname{Coh}(X)$ has an almost ample set (see \cite[Lemma 3.3.2]{olanderphd}). Another class of examples is given by \cite[Proposition 2.12]{suppcanste}.
                \end{es}

            Finally, we have the following result for derived categories with a geometric flavour.
            \begin{prop} \label{prop:almostample}
                Let $\mathcal{A}$ be an abelian category with an almost ample set. Then $\derb(\mathcal{A})$ has a strongly unique enhancement.
            \end{prop}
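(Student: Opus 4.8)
The plan is to reduce the statement to the D-standardness criterion. By Theorem~\ref{thm:exdstand} it is enough to show that an abelian category $\mathcal{A}$ carrying an almost ample set $\{P_i\}_{i\in I}$ is D-standard; and by Lemma~\ref{lem:kstandard} together with Proposition~\ref{prop:natfgrad} this reduces to the assertion that every triangulated autoequivalence $F\colon \derb(\mathcal{A})\to\derb(\mathcal{A})$ with $F(X)=X$ for all $X\in\mathcal{A}$, $F(f)=f$ for every morphism $f$ of $\mathcal{A}$, and $F_{\mid\mathcal{A}}=\id_{\mathcal{A}}$, is naturally isomorphic to $\id_{\derb(\mathcal{A})}$. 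From here I would follow the method of Lunts--Orlov \cite{luntsorlov}, in the form extended by Canonaco--Stellari \cite{suppcanste} to almost ample sets (see also \cite{olanderphd}).

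First I would observe that such an $F$ automatically acts as the identity on all the groups $\hom_{\derb(\mathcal{A})}(A,B[n])\cong\ext^n_{\mathcal{A}}(A,B)$ with $A,B\in\mathcal{A}$. For $n\le 0$ this is immediate; for $n=1$, an extension class $e\in\ext^1_{\mathcal{A}}(A,B)$ is the \emph{unique} connecting morphism in the distinguished triangle $B\to E\to A\xrightarrow{e}B[1]$ attached to the corresponding short exact sequence $0\to B\to E\to A\to 0$ (the ambiguity in the connecting morphism lies in $\hom_{\derb(\mathcal{A})}(B[1],E)\cong\ext^{-1}_{\mathcal{A}}(B,E)=0$), and since $F$ fixes that short exact sequence it sends this triangle to the triangle with the same first two maps and connecting morphism $F(e)$, whence $F(e)=e$; the case of general $n$ then follows by Yoneda composition. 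I would also record the two features of an almost ample set that drive the rest of the proof: (i) the objects $\{P_i[n]:i\in I,\ n\in\mathbb{Z}\}$ form a spanning class for $\derb(\mathcal{A})$, i.e.\ an object $Z$ with $\hom_{\derb(\mathcal{A})}(P_i,Z[n])=0$ for all $i,n$ must vanish — seen by applying an epimorphism $P_i^{\oplus k}\twoheadrightarrow H^{m}(Z)$ onto the lowest non-vanishing cohomology and using that $\hom_{\derb(\mathcal{A})}$ between objects of $\mathcal{A}$ vanishes in negative degrees; and (ii) the vanishing $\hom_{\mathcal{A}}(A,P_i)=0$, which rigidifies maps into the $P_i$ and is exactly what is needed to pin down the non-canonical choices appearing below.

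The core of the argument is the construction of a natural isomorphism $\eta\colon\id_{\derb(\mathcal{A})}\Rightarrow F$, done by induction on the cohomological amplitude of $X\in\derb(\mathcal{A})$: for $X$ concentrated in one degree one has $X\cong A[m]$ with $A\in\mathcal{A}$ and sets $\eta_X=\id_X$ (valid since $F$ fixes $\mathcal{A}$ and commutes with the shift); for the inductive step one uses a truncation triangle $\tau_{\le n}X\to X\to\tau_{>n}X\xrightarrow{+1}$ and axiom (TR3) to complete $\eta_{\tau_{\le n}X}$ and $\eta_{\tau_{>n}X}$ to a candidate $\eta_X$. The (TR3)-fill-in is unique only up to an Ext group of the shape $\hom_{\derb(\mathcal{A})}((\tau_{>n}X)[-1],\tau_{\le n}X)$, and the point is to use the approximations of objects of $\derb(\mathcal{A})$ by finite iterated extensions of the $P_i[n]$ — available thanks to the almost ample set — together with the vanishing in (ii) to make these choices compatibly, so that $\eta_X$ is independent of the chosen truncation and natural in $X$; naturality in morphisms is then checked by the same amplitude induction, reducing to the already settled case $X,Y\in\mathcal{A}$ and to the spanning property (i). I expect the main obstacle to be precisely this last step — organizing the fill-ins into a genuine, functorial natural transformation and verifying naturality — since it is the technical heart of the Lunts--Orlov method; I would carry it out by transporting the arguments of \cite{luntsorlov,suppcanste} to the present abstract setting, which is legitimate because every ingredient used there (spanning classes, truncation triangles, vanishing of negative $\hom$'s, approximations by the $P_i[n]$) is available here. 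Combining this with Theorem~\ref{thm:exdstand} gives that $\derb(\mathcal{A})$ has a strongly unique enhancement.
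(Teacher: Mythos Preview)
Your proposal is correct and follows essentially the same route as the paper: reduce to D-standardness via Theorem~\ref{thm:exdstand}, and establish D-standardness by the Lunts--Orlov method in the form extended to almost ample sets by Canonaco--Stellari \cite{suppcanste} (the paper's own proof is just a sketch pointing to \cite[Proposition~3.7]{suppcanste} and \cite[Proposition~3.4.6]{orlov2}). Your outline of that argument --- identity on $\ext^n$ via Yoneda products, spanning class, amplitude induction with approximations by the $P_i[n]$ --- is an accurate summary of what those references do; one tiny slip is that the obstruction group for the uniqueness of the connecting morphism is $\hom_{\derb(\mathcal{A})}(B[1],A)$ rather than $\hom_{\derb(\mathcal{A})}(B[1],E)$, but both are $\ext^{-1}$ and hence vanish, so the conclusion is unaffected.
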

            \begin{proof}[Sketch of proof]
                We want to show that $\mathcal{A}$ is D-standard and apply Theorem \ref{thm:exdstand}. The proof is almost identical to the one of \cite[Proposition 3.7]{suppcanste}, which is a generalization of \cite[Proposition 3.4.6]{orlov2}. For a self-contained proof, the reader may refer to \cite[Theorem 5.58]{lorenzinphd}.
            \end{proof}
            \appendix
            \section{On bounded derived categories of exact categories}\label{app:derbex}
            Here we present two results that should be expected by experts, but that we did not find in the literature. 
              


            The following generalizes the last part of the statement of \cite[Lemma 3.1]{krauseaus}. 
            \begin{lem}\label{lem:genconfl}
            Let $\mathcal{E}$ be an exact category. Then $\operatorname{Ac}^{\operatorname{b}}(\mathcal{E})\subset \kcomb(\mathcal{E})$, the category of acyclic complexes, is the triangulated envelope of the full subcategory given by conflations (intended as complexes).
            \end{lem}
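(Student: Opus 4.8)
The plan is to show a double inclusion of triangulated subcategories of $\kcomb(\mathcal{E})$. Write $\mathcal{C} \subset \kcomb(\mathcal{E})$ for the full subcategory of complexes isomorphic (in $\kcomb(\mathcal{E})$) to a conflation $0 \to A \to B \to C \to 0$ viewed as a complex concentrated in three consecutive degrees, and write $\langle \mathcal{C}\rangle$ for the triangulated envelope of $\mathcal{C}$, i.e.\ the smallest strictly full triangulated subcategory of $\kcomb(\mathcal{E})$ containing $\mathcal{C}$. The assertion is that $\langle \mathcal{C}\rangle = \acb(\mathcal{E})$. The inclusion $\langle \mathcal{C}\rangle \subseteq \acb(\mathcal{E})$ is the easy direction: a conflation, regarded as a complex, is acyclic by Definition \ref{def:acyclic} (take $C_i$ to be $0$, $A$, $B$ in the relevant spots), and $\acb(\mathcal{E})$ is a triangulated subcategory of $\kcomb(\mathcal{E})$; hence it contains the triangulated envelope of $\mathcal{C}$.

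The content is in the reverse inclusion $\acb(\mathcal{E}) \subseteq \langle \mathcal{C}\rangle$. First I would reduce to the case of a \emph{bounded} acyclic complex $X^\bullet$, say concentrated in degrees $[a,b]$, and induct on the length $b-a$. For the base case, a two-term acyclic complex $0 \to X^{i-1} \xrightarrow{d^{i-1}} X^i \to 0$ forces $d^{i-1}$ to be an isomorphism (the factorization through $C_{i-1}$ together with the conflations $0 \to C_{i-2} \to X^{i-1} \to C_{i-1} \to 0$ and $0 \to C_{i-1} \to X^i \to C_i \to 0$ with $C_{i-2} = C_i = 0$ gives $X^{i-1} \cong C_{i-1} \cong X^i$), so the complex is contractible and thus zero in $\kcomb(\mathcal{E})$, hence trivially in $\langle \mathcal{C}\rangle$. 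For the inductive step, let $X^\bullet$ be acyclic in degrees $[a,b]$ with associated cycle objects $C_i$ and conflations $0 \to C_{i-1} \to X^i \to C_i \to 0$. The key move is the standard ``brutal truncation'' trick: the conflation at the top degree $b$, namely $0 \to C_{b-1} \to X^b \to C_b \to 0$ with $C_b = 0$, together with the one at degree $b-1$, lets me exhibit $X^\bullet$ as fitting in a distinguished triangle in $\kcomb(\mathcal{E})$ relating it to a shorter acyclic complex and a conflation complex. Concretely, I would take the conflation $\sigma_b: 0 \to C_{b-1} \to X^{b-1} \to X^{b} \to 0$ (this is a conflation because $X^{b-1} \to X^b$ is an admissible epic with kernel $C_{b-1}$, using that $X^{b-1} \twoheadrightarrow C_{b-1}$ is admissible epic and $C_{b-1} \hookrightarrow X^{b-1}$\dots here one must be a little careful: the conflation to use is $0\to C_{b-2}\to X^{b-1}\to C_{b-1}\to 0$ and $0\to C_{b-1}\to X^b\to 0$), placed as a complex in degrees $[b-2,b]$ or $[b-1,b]$, and build the obvious chain map to or from $X^\bullet$ whose cone is quasi-isomorphic (indeed homotopy equivalent, since everything is a complex of objects of $\mathcal{E}$ and the maps are degreewise split or close to it) to the acyclic complex $X^\bullet$ truncated to degrees $[a,b-1]$ with a suitably modified top differential. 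Because cones of maps between objects of a triangulated subcategory stay in it, and the shorter complex lies in $\langle\mathcal{C}\rangle$ by induction while the conflation complex lies in $\mathcal{C} \subset \langle\mathcal{C}\rangle$, we conclude $X^\bullet \in \langle\mathcal{C}\rangle$.

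The main obstacle — and the place where I would be most careful — is making the inductive ``peeling off one conflation'' step precise at the level of honest chain maps and cones in $\kcomb(\mathcal{E})$ rather than in a derived category: one must check that the relevant degreewise maps really do assemble into a chain map, that its cone is genuinely (chain-)homotopy equivalent to the expected shorter acyclic complex, and that no exactness in a bigger abelian category is secretly being used (only the conflation structure of $\mathcal{E}$, and the fact that admissible monics/epics are closed under the operations in Definition \ref{def:exactcat}, may be invoked). The Gabriel--Quillen embedding is available as a sanity check but the argument should be phrased intrinsically. Once the chain-level bookkeeping is pinned down, the induction closes and both inclusions together give $\acb(\mathcal{E}) = \langle \mathcal{C}\rangle$, which is exactly the claim; comparing with \cite[Lemma 3.1]{krauseaus} shows this is the promised generalization, the point being that no abelian or Frobenius hypothesis on $\mathcal{E}$ is needed.
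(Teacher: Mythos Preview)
Your approach is correct and essentially identical to the paper's: induction on the length of the bounded acyclic complex, peeling off one conflation from the top via a cone. The paper makes the step you were worried about explicit by writing $X\cong\cone(f)$, where $f$ is the identity on $C_{n-2}=\coker(d^{n-3})$ between the three-term conflation $C_{n-2}\to X^{n-1}\to X^n$ (placed so its leftmost term sits in the same degree as the rightmost term of the target) and the shorter acyclic complex $X^1\to\cdots\to X^{n-2}\to C_{n-2}$; the cone identity is then a direct computation.
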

            \begin{proof}
                Let $X=(X^i,d^i)$ be a bounded acyclic complex. Up to shift, we can assume $X^i=0$ for $i<1$ and $i>n$ for some $n>3$. Let us write
         \[\arraycolsep=0.5pt
         \begin{array}{rlrll}
             X:= &\dots \to 0 \to  X^1 \to X^2 \to \dots&&&\to X^n \to 0 \to \dots\\
             X^{\le n-2} :=& \dots \to 0 \to X^1 \to X^2 \to \dots& \to X^{n-2} &\overset{p}{\to} \coker (d^{n-3}) & \to 0 \to \dots \\
             X^{\ge n-1}[-1]:= && \dots \to 0 &\to \coker (d^{n-3}) &\overset{j}{\to} X^{n-1} \overset{-d^{n-1}}{\to} X^{n} \to 0\to \dots
         \end{array}
         \]
         where the composition of $p: X^{n-2} \to \coker (d^{n-3})$ and $j: \coker (d^{n-3}) \to X^{n-1}$ gives $-d^{n-2}$.
     
         From direct computations, we have that $X \cong \cone(f)$, where $f: X^{\ge n-1}[-1]\to X^{\le n-2}$ is defined to be the identity on $\coker(d^{n-3})$ and 0 elsewhere. The conclusion now follows from an induction.
            \end{proof}

We recall that the Ext-groups of an exact category $\mathcal{E}$ are given exactly as in the case of abelian categories. The $n$-extensions of $A$ and $B$ are, up to the same equivalence relation, acyclic complexes of length $n+2$ of the form: 
            \[
            \begin{tikzcd}
             0 \ar[r]& B \ar[r,"\xi_0"]& X_1 \ar[r,"\xi_1"] & X_2 \ar[r,"\xi_2"]&\dots \ar[r,"\xi_{n-1}"]& X_n \ar[r,"\xi_n"]& A \ar[r]& 0
             \end{tikzcd}
                    \] 
                For a more precise discussion, the reader may consult \cite[Definition A.4]{lorenzin1}.

\begin{prop}\label{prop:0detgr} Let $\mathcal{E}$ be an exact category, and consider $F:\derb(\mathcal{E})\to \derb(\mathcal{E})$ a triangulated equivalence such that $F(X)=X$ for all $X \in \mathcal{E}$. Then $F_{\mid \mathcal{E}}$ determines uniquely $F_{\mid H^*(\mathcal{E}_{\dg})}^{gr}$.
\end{prop}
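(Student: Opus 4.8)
The plan is to identify the graded category $H^*(\mathcal{E}_{\dg})$ with the graded Yoneda-Ext category of $\mathcal{E}$ — objects those of $\mathcal{E}$, degree-$n$ morphisms $X\to Y$ being $\ext^n_{\mathcal{E}}(X,Y)\cong\hom_{\derb(\mathcal{E})}(X,Y[n])$, composition the Yoneda splicing — and then to show that the action of $F$ on each $\ext^n_{\mathcal{E}}(X,Y)$ coincides with $\ext^n(F_{\mid\mathcal{E}})$. This identification is the one encoded in the footnote of Definition \ref{def:fgrad} together with $\tr(\mathcal{E}_{\dg})\cong\derb(\mathcal{E})$ from Corollary \ref{cor:dbunique}. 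Once this is done we conclude that $F^{gr}_{\mid H^*(\mathcal{E}_{\dg})}$ is the graded functor that fixes objects, equals $F_{\mid\mathcal{E}}$ in degree $0$, and equals $\ext^n(F_{\mid\mathcal{E}})$ in degree $n$ — manifestly a function of $F_{\mid\mathcal{E}}$ alone.

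First I would record that $F_{\mid\mathcal{E}}$ is an exact autoequivalence of $\mathcal{E}$. Under the canonical embedding $\mathcal{E}\hookrightarrow\derb(\mathcal{E})$, a conflation $0\to A\to B\to C\to 0$ is exactly a distinguished triangle $A\to B\to C\to A[1]$ all of whose vertices lie in $\mathcal{E}$; one direction is the definition of the triangulated structure, and the converse follows by rotating and using that, $\mathcal{E}$ being admissible exact in $\derb(\mathcal{E})$ (Proposition \ref{prop:realdg}), the morphism $C[-1]\to A$ lives in $\hom_{\derb(\mathcal{E})}(C[-1],A)=\ext^{-1}_{\mathcal{E}}(A,C)=\ext^1_{\mathcal{E}}(A,C)$ wait — more precisely, the rotated triangle is classified by a class in $\ext^1_{\mathcal{E}}(A,C)$, which one checks produces a conflation with the given triangle. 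Since $F$ is triangulated and fixes every object of $\mathcal{E}$, it carries triangles of this shape to triangles of this shape, hence conflations to conflations; being an equivalence, $F_{\mid\mathcal{E}}$ is an exact autoequivalence.

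Next I would invoke the Yoneda description recalled just before the statement: every class of $\ext^n_{\mathcal{E}}(X,Y)$ is represented by an acyclic complex $0\to Y\to X_1\to\dots\to X_n\to X\to 0$ with $X_i\in\mathcal{E}$, and the associated morphism $X\to Y[n]$ in $\derb(\mathcal{E})$ is the composite of the connecting morphisms $\delta_i\colon C_i\to C_{i-1}[1]$ of the conflations $0\to C_{i-1}\to X_i\to C_i\to 0$ determined by the cycle objects $C_i\in\mathcal{E}$ (Definition \ref{def:acyclic}). The crucial point is that $\delta_i$ is the \emph{unique} morphism completing $C_{i-1}\to X_i\xrightarrow{v_i}C_i$ to a distinguished triangle: given another completion $\delta_i'$, axiom TR3 furnishes $h\colon C_i\to C_i$ with $h v_i=v_i$ and $\delta_i=\delta_i' h$, and applying $\hom(-,C_i)$ to the triangle shows $\hom(C_i,C_i)\xrightarrow{v_i^*}\hom(X_i,C_i)$ is injective because $\hom_{\derb(\mathcal{E})}(C_{i-1}[1],C_i)=\ext^{-1}_{\mathcal{E}}(C_{i-1},C_i)=0$; hence $h=\id$ and $\delta_i=\delta_i'$. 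Applying the triangulated functor $F$ to the triangle of $0\to C_{i-1}\to X_i\to C_i\to 0$ and using that $F$ fixes objects of $\mathcal{E}$ and acts on $\mathcal{E}$-morphisms via $F_{\mid\mathcal{E}}$, one obtains — after transporting along the structure isomorphism $\eta$ of Definition \ref{def:fgrad} — a distinguished triangle completing $C_{i-1}\xrightarrow{F_{\mid\mathcal{E}}}X_i\xrightarrow{F_{\mid\mathcal{E}}}C_i$; by the uniqueness just proved it is the canonical triangle of the conflation $0\to C_{i-1}\xrightarrow{F_{\mid\mathcal{E}}}X_i\xrightarrow{F_{\mid\mathcal{E}}}C_i\to 0$, so the value $\eta^1_{C_{i-1}}\circ F(\delta_i)$ of $F^{gr}_{\mid H^*(\mathcal{E}_{\dg})}$ on $\delta_i$ is the connecting morphism of that conflation — which depends only on $F_{\mid\mathcal{E}}$. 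Since $F^{gr}_{\mid H^*(\mathcal{E}_{\dg})}$ is a graded functor it respects the Yoneda composition, so it sends the class of $0\to Y\to X_1\to\dots\to X_n\to X\to 0$ to the class of $0\to Y\xrightarrow{F_{\mid\mathcal{E}}}X_1\to\dots\to X_n\xrightarrow{F_{\mid\mathcal{E}}}X\to 0$, i.e. acts as $\ext^n(F_{\mid\mathcal{E}})$; by additivity this pins down $F^{gr}_{\mid H^*(\mathcal{E}_{\dg})}$ on all of $H^*(\mathcal{E}_{\dg})$, and the same compatibility shows the structure isomorphisms $\eta^n_Y$ themselves are forced by $F_{\mid\mathcal{E}}$, leaving no residual ambiguity.

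The main obstacle I anticipate is bookkeeping rather than conceptual: making precise the identification of $H^*(\mathcal{E}_{\dg})$ with the graded Yoneda-Ext category so that composition really is Yoneda splicing, and threading the structure isomorphisms $\eta$ of Definition \ref{def:fgrad} through the computation (including checking that the degree-$n$ piece is an $n$-fold composite of degree-$1$ morphisms). The one genuinely load-bearing step is the uniqueness of the connecting morphism among triangle completions, which is the only place the exactness of $\mathcal{E}$ — equivalently, its admissibility inside $\derb(\mathcal{E})$, giving the vanishing $\hom_{\derb(\mathcal{E})}(A,C[-1])=0$ for $A,C\in\mathcal{E}$ — is actually used.
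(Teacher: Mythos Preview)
Your proposal is correct and follows essentially the same approach as the paper: identify $H^*(\mathcal{E}_{\dg})$ with the Yoneda--Ext category, reduce to degree $1$ via the fact that higher Ext classes are Yoneda products of $\ext^1$ classes, and prove uniqueness of the action on $\ext^1$. The only minor difference is in the uniqueness step for the connecting morphism: you invoke the vanishing $\hom_{\derb(\mathcal{E})}(C_{i-1}[1],C_i)=0$ to force the TR3 filler to be the identity, whereas the paper uses the universal property of the cokernel $X$ in $\mathcal{E}$ (noting that $\hom_{\derb(\mathcal{E})}(X,X)=\hom_{\mathcal{E}}(X,X)$) --- these are two phrasings of the same injectivity statement.
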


 \begin{proof}
    By \cite[Corollary A.7.1 and Proposition A.7]{positselski}, $H^*(\mathcal{E}_{\dg})$ is simply the category of the Ext-groups, i.e. \[\hom_{H^*(\mathcal{E}_{\dg})} (X,Y)= \bigoplus_i \ext^i (X,Y)[-i]\] 
    for every $X,Y \in \mathcal{E}$.
    Since every morphism in $\hom(X,Y[1])$ is associated to a conflation (as expressed in \cite{dyer}), given another triangulated autoequivalence $(G,\mu)$ such that $G_{\mid \mathcal{E}}=F_{\mid \mathcal{E}}$ (so $G(X)=X$ for all $X \in \mathcal{E}$ as well), we have 
    the following isomorphism of distinguished triangles
    \[
    \begin{tikzcd}
        Y \ar[r,"Ff"]\ar[d,"\id"] & Z \ar[r,"Fg"]\ar[d,"\id"] & X\ar[r,"\eta_{Y}Fh"]\ar[d,dashed] & Y[1]\ar[d,"\id"]\\
        Y \ar[r,"Gf"] & Z \ar[r,"Gg"] & X\arrow{r}{\mu_{Y} Gh} & Y[1],
    \end{tikzcd}
    \]
    where the first two vertical arrows are the identity because $G_{\mid \mathcal{E}}=F_{\mid \mathcal{E}}$. From the universal property of the cokernel $X$ in $\mathcal{E}$, the dashed morphism is also the identity, so that $\eta_{Y}Fh = \mu_{Y}Gh$. As every extension is obtained by Yoneda products of elements in the first Ext-group (cf. the beginning of the proof of \cite[Proposition A.7]{lorenzin1}), a simple induction concludes the proof.
 \end{proof}

\bibliographystyle{alphaabbr}
{\footnotesize 
\bibliography{biblio}}
\end{document}